\documentclass[pdflatex,sn-mathphys-num]{sn-jnl}

\usepackage{comment}
\usepackage{graphicx}%
\usepackage{multirow}%
\usepackage{amsmath,amssymb,amsfonts}%
\usepackage{amsthm}%
\usepackage{mathrsfs}%
\usepackage[title]{appendix}%
\usepackage{xcolor}%
\usepackage{textcomp}%
\usepackage{manyfoot}%
\usepackage{booktabs}%
\usepackage{algorithm}%
\usepackage{algorithmicx}%
\usepackage{algpseudocode}%
\usepackage{listings}%
\usepackage{bm}
\usepackage{float}
\usepackage{threeparttable}

\usepackage{graphicx}
\usepackage[caption=false,font=footnotesize]{subfig}

\DeclareMathOperator*{\argmin}{\arg\!\min}

\newcommand{\real}{{\rm{I\hspace{-.75mm}R}}}

\newcommand{\Ebar}{\bar{E}}
\newcommand{\Fbar}{\bar{F}}
\newcommand{\Gbar}{\bar{G}}

\newcommand{\sigmahat}{\hat{\sigma}}

\newcommand{\Deltatilde}{\widetilde{\Delta}}

\newcommand{\BFe}{\bm{e}}

\newcommand{\BFq}{\bm{q}}

\newcommand{\BFs}{\bm{s}}

\newcommand{\BFx}{\bm{x}}
\newcommand{\BFy}{\bm{y}}

\newcommand{\BFD}{\bm{D}}
\newcommand{\BFE}{\bm{E}}

\newcommand{\BFG}{\bm{G}}
\newcommand{\BFH}{\bm{H}}

\newcommand{\BFX}{\bm{X}}
\newcommand{\BFS}{\bm{S}}

\newcommand{\BFEbar}{\bar{\BFE}}

\newcommand{\BFDbar}{\bar{\BFD}}
\newcommand{\BFGbar}{\bar{\BFG}}
\newcommand{\BFHbar}{\bar{\BFH}}

\newcommand{\mcA}{\mathcal{A}}
\newcommand{\mcB}{\mathcal{B}}

\newcommand{\mcD}{\mathcal{D}}

\newcommand{\mcF}{\mathcal{F}}
\newcommand{\mcG}{\mathcal{G}}

\newcommand{\mcJ}{\mathcal{J}}

\newcommand{\mcO}{\mathcal{O}}

\newcommand{\mcS}{\mathcal{S}}

\newcommand{\mcU}{\mathcal{U}}

\newcommand{\mcOtilde}{\widetilde{\mcO}}

\newcommand{\mbE}{\mathbb{E}}

\newcommand{\mbN}{\mathbb{N}}

\newcommand{\mbP}{\mathbb{P}}

\newcommand{\mbR}{\mathbb{R}}

\newcommand{\sfH}{\mathsf{H}}
\newcommand{\sfI}{\mathsf{I}}

\newtheorem{lemma}{Lemma}

\newtheorem{corollary}{Corollary}

\newtheorem{assumption}{Assumption}

\renewcommand{\thecondition}

\theoremstyle{thmstyleone}%
\newtheorem{theorem}{Theorem}
\theoremstyle{thmstyletwo}%
\newtheorem{remark}{Remark}%

\theoremstyle{thmstylethree}%
\newtheorem{definition}{Definition}%

\newcommand{\pre}{\textrm{pre}}
\newcommand{\s}{\textrm{s}}
\newcommand{\Deltapre}{\Delta_k^{\mathrm{pre}}}

\begin{document}

\title[Stochastic Trust Region with Quadratic Regularization]{Adaptive Regularization within Trust Region Methods for Stochastic Nonconvex Optimization}


\author[1]{\fnm{Yunsoo} \sur{Ha}}\email{yh2429@cornell.edu}

\author*[2]{\fnm{Sara} \sur{Shashaani}}\email{sshasha2@ncsu.edu}

\author[3]{\fnm{Quoc} \sur{Tran-Dinh}}\email{quoctd@email.unc.edu}





\abstract{We propose a stochastic nonconvex optimization algorithm that achieves almost sure $\mcOtilde(\epsilon^{-1.5})$ iteration complexity for problems with smooth objective functions and gradients only observable with noise. The mean-zero stochastic noise is decision-dependent and has unbounded support with subexponential tail,  allowing our framework to cover a broad class of problems. The improved almost sure iteration complexity is achieved with a new variant of the adaptive sampling trust-region optimization (ASTRO) augmented with an adaptively regularized local model, which we term Reg-ASTRO. Adaptive sampling ensures that the estimation  precision is aligned with a measure of stationarity, so that iterates closer to stationarity trigger higher accuracy requirement for sampling. A key analytical challenge arises because the trust-region radius and regularization are coupled and not determined prior to gradient estimation at each iteration. We further establish an almost sure $\mcOtilde(\epsilon^{-4.5})$ sample complexity for Reg-ASTRO, which  improves to $\mcOtilde(\epsilon^{-3.5})$ under stronger regularity conditions and use of common random numbers, substantially outperforming first-order methods in theory and numerical experiments.}


\keywords{nonconvex optimization, complexity analysis, quadratic reguralization, first-order stochastic oracles}



\maketitle

\section{Introduction}\label{sec:intro}
In this paper, we develop a first unconstrained stochastic trust region (TR) optimization framework for nonconvex objective functions that uses an adaptive regularization technique. TR methods offer a principled framework for nonconvex stochastic optimization through adaptive step-sizes and model-based exploitation of curvature. A typical TR method iteratively takes steps that optimize a quadratic approximation of the objective function within a radius from the incumbent that regulates the step-size. Adaptive regularization techniques attempt to regularize the approximation itself to ensure better directions are identified as a result of which the algorithm can attain improved complexity for the number of iterations.

We build on an existing algorithm called \texttt{ASTRO}~\cite{ha2025complexity}---adaptive sampling trust-region optimization---that uses a varying sample size for estimation of the function and gradients by forcing the stochastic error to be in the same order as the deterministic error which is governed by a measure of stationarity in the iterate. This mechanism saves a lot of sampling cost in the early and far-from-optimality iterations while increasing it based on the perceived proximity to the first-order critical point, inferred from the TR radius. We call the new algorithm \texttt{Reg-ASTRO}---the regularized \texttt{ASTRO}.

The adaptive regularization added to the TR subproblem will also determine how large the TR radius should be. This is one of the main distinctions between \texttt{Reg-ASTRO} and classical TR methods that have a geometric rate of increase or decrease for the TR radius depending on whether the current model in the current TR led to a trial point with sufficient progress.  

\subsection{Problem Statement}
We consider the following stochastic unconstrained nonconvex optimization problem 
\begin{equation} \label{eq:problem}
\operatorname*{min}_{\BFx \in \real^{d}} \ \left\{f(\BFx) =   \mbE [F(\BFx,\xi)] = \int_{\Xi} F(\BFx,\xi) \mbP(\mathrm{d}\xi)\right\},
\end{equation}
where $f:\real^{d} \rightarrow \real$ is a smooth function and $\xi$ is a random variable defined on the probability space $(\Xi,\mcF,\mbP)$. We assume that we have access to a stochastic first-order oracle that, for any $\BFx \in \real^d$, returns a noisy function value $F(\BFx,\xi)$ and a noisy gradient $\BFG(\BFx,\xi)$ satisfying $\mathbb{E}[F(\BFx,\xi)] = f(\BFx)$ and $\mathbb{E}[\BFG(\BFx,\xi)] = \nabla f(\BFx)$.
This means that at each iteration, one can obtain noisy evaluations of both the objective value and its gradient through the oracle, which provides a map $(\BFx,\xi)\mapsto (F(\BFx,\xi),\BFG(\BFx,\xi))$. Throughout the paper, we impose the following standing assumptions on the objective function $f$ and the stochastic oracle. (Throughout, $\|\cdot\|$ denotes Euclidean norm.)

\begin{assumption}[Smoothness]
\label{assum:smooth-f}
The function $f:\real^d \to \real$ is twice continuously differentiable and attains its minimum $f^*$ at some $\BFx^*$. 
Moreover, $\nabla f$ is $\kappa_{Lg}$-Lipschitz continuous, i.e.,
\[
\|\nabla f(\BFx_1)-\nabla f(\BFx_2)\|\le \kappa_{Lg}\|\BFx_1-\BFx_2\|,\quad \forall \BFx_1,\BFx_2\in\real^d,
\]
and $\nabla^2 f$ is $\kappa_{L\sfH}$-Lipschitz continuous, i.e.,
\[
\|\nabla^2 f(\BFx_1)-\nabla^2 f(\BFx_2)\|\le \kappa_{L\sfH}\|\BFx_1-\BFx_2\|,\quad \forall \BFx_1,\BFx_2\in\real^d.
\]
\end{assumption}

\begin{assumption}[Finite Variance] \label{assum:varcont} There is a $\bar\sigma \in (0,\infty)$ that satisfies 
\begin{align*}
    \sup_{\BFx \in \real^d}\left\{\sigma_F^2(\BFx) := \mathbb{E}\!\left[ (F(\BFx,\xi) - f(\BFx))^2 \right]\right\} 
    & \le \bar{\sigma}^2, \\ 
    \sup_{\BFx \in \real^d}\left\{\sigma_G^2(\BFx) := \mathbb{E}\!\left[ \|\BFG(\BFx,\xi) - \nabla f(\BFx)\|^2 \right]\right\} & \le \bar{\sigma}^2.
\end{align*}
\end{assumption}

A general approach to solve \eqref{eq:problem} is to apply an iterative algorithm that generates a sequence of random  iterates $\{\BFX_k\}_{k\ge 0}$ (using a capital letter to convey randomness), where each new iterate is constructed using information obtained from the stochastic oracle. As an example of such information, for $n$ independent and identically distributed (i.i.d.) samples $\{\xi_i\}_{i=1}^n$ at a point $\BFx \in \real^d$, 
we denote the function and gradient estimates by
\begin{equation}\label{eq:estimator}
    \Fbar(\BFx,n) := \frac{1}{n}\sum_{i=1}^n F(\BFx,\xi_i),
    \qquad
    \BFGbar(\BFx,n) := \frac{1}{n}\sum_{i=1}^n \BFG(\BFx,\xi_i),
\end{equation}
with the corresponding example of variance estimators
\begin{align*}
    \hat\sigma_F(\BFx,n) &:= \sqrt{\frac{1}{n-1}\sum_{i=1}^n \big(F(\BFx,\xi_i)-\Fbar(\BFx,n)\big)^2}, \\
    \hat\sigma_{\BFG}(\BFx,n) &:= \sqrt{\mathrm{Tr}\!\left[\frac{1}{n-1}\sum_{i=1}^n \big(\BFG(\BFx,\xi_i)-\BFGbar(\BFx,n)\big)\big(\BFG(\BFx,\xi_i)-\BFGbar(\BFx,n)\big)^\top\right]},
\end{align*}
where Tr$(\cdot)$ denotes the trace of a matrix. Throughout the paper, we will use the notation  
\begin{equation}
    \label{eq:sig-mx}
    \sigma_{\text{mx}}(\BFx,n) := \max\{\sigmahat_{F}(\BFx,n),\sigmahat_{\BFG}(\BFx,n),\sigma_0\}.
\end{equation}

In a stochastic setting, 
the measure of first-order complexity that is the first iteration that satisfies $\|\nabla f(\BFX_k)\|\le \epsilon$ for a pre-specified accuracy of $\epsilon>0$ is no longer a deterministic quantity.  We denote this random variable by $T_\epsilon$ and define it as $T_\epsilon=\min\{k:\ \|\nabla f(\BFX_k)\|\le \epsilon\}$. The complexity analysis would involve characterizing the expected value of $T_\epsilon$ or its probability of exceeding a presumed function of $\epsilon$. This is commonly known as the iteration-complexity. In the best case scenario we expect that a stochastic algorithm would perform as well as its deterministic counterpart in terms of iteration-complexity (but in a probabilistic sense). Still, since \emph{effort} in terms of total function evaluations at each iteration may not be fixed, a different complexity metric in the case of stochastic optimization is warranted. The \emph{sample complexity} terms the total number of random function evaluations (i.e., simulation runs) to reach $\epsilon$-stationarity; we denote this random variable by $W_\epsilon$. For a proposed algorithm with good finite-time guarantees, one needs both competitive iteration-complexity 
and sample complexity that incorporates cost of sampling and is hopefully not much worse than the iteration complexity.

\subsection{Regularization Methods}
The stochastic gradient method (SGD), which updates $\BFX_{k+1} = \BFX_k - \alpha_k \BFGbar(\BFX_k,n)$ with a predetermined step-size $\alpha_k$, represents the most basic use of the first-order stochastic oracle. However, for problems with ill-conditioned Hessians, the convergence of first-order methods can be extremely slow, which motivates the incorporation of second-order information.


Newton's method is a prototypical second-order algorithm. By constructing a quadratic approximation of the objective, it leverages curvature information to achieve faster convergence than first-order methods. However, its rapid convergence is guaranteed only locally, and the method remains highly sensitive to the initial guess. In fact, even with globalization strategies such as line search or trust regions, Newton’s method can converge as slowly as a basic first-order scheme~\cite{cartis2022evaluation}. These limitations have motivated the development of \emph{regularization methods}, which hinge on the $\mcO(\|\nabla f(\BFX_k)\|^{3/2})$ model reduction---larger than $\mcO(\|\nabla f(\BFX_k)\|^{2})$ reduction of first-order methods when $\|\nabla f(\BFX_k)\|<1$---achieved at each step to provide global convergence guarantees while retaining faster convergence rates of $\mcO(\epsilon^{-3/2})$ than that of $\mcO(\epsilon^{-2})$ in first-order approaches. 
A prominent example is the cubic regularization method~\cite{nesterov2006cubic}, which augments the quadratic model with a cubic term to ensure global convergence and control step lengths. 
This approach was later extended to the Adaptive Cubic Regularization (\texttt{ARC}) framework~\cite{cartis2011adaptive}, which updates the regularization parameter dynamically to enhance robustness in practice. Another line of work~\cite{doikov2024gradient,mishchenko2023regularized,gratton2025yet} uses a quadratic regularization, where the Newton step is modified by adding a multiple of the identity matrix to the Hessian. Specifically, the update takes the form
$\BFS_k = -\bigl(\nabla^2 f(\BFX_k) + \nu_k \sfI_d \bigr)^{-1}\nabla f(\BFX_k),$ with $\sfI_d$ being a $d\times d$ identity matrix and $\nu_k$ chosen adaptively, often following $\nu_k\propto \sqrt{\|\nabla f(\BFX_k)\|}$. 

Within the TR framework, this adaptive quadratic regularization perspective has been explored in different ways. For instance, the \texttt{TRACE} algorithm~\cite{curtis2017trace} incorporates regularization only implicitly, by adjusting the TR radius through subproblems involving a shifted Hessian. In contrast, the universal TR method~\cite{jiang2026beyond}~(\texttt{UTR}) makes quadratic regularization explicit by tying both the TR radius and the regularization term to $\sqrt{\|\nabla f(\BFX_k)\|}$. We will review \texttt{UTR} with more details in Section~\ref{sec:utr}.

\begin{remark}
In the quadratic regularization method for the deterministic setting, the function reduction is bounded below by $\mcO(\|\BFS_k\|^3)$ for successful iterations. However, there does not exist a constant $\theta>0$ such that $\|\BFS_k\| \ge \theta\|\nabla f(\BFX_k)\|$ for all successful steps. Consequently, a function reduction of order $\mcO(\epsilon^{3/2})$ cannot be ensured for every successful iteration. To address this issue, existing convergence analyses of quadratic regularization methods rely on the \emph{gradient contraction condition}, i.e., 
\(
\|\nabla f(\BFX_{k+1})\| \le \eta \|\nabla f(\BFX_k)\| \text{ for some } \eta \in (0,1),
\)
either explicitly or implicitly~\cite{gratton2025yet,jiang2026beyond}, i.e., a geometric decrease in gradient norms. In particular, successful iterations that do not yield sufficient function reduction yet satisfy this contraction property guarantee convergence. One can then limit the number of such iterations between consecutive $\mcO(\epsilon^{3/2})$-reducing steps to preserve 
the overall iteration complexity of $\mcO(\epsilon^{-3/2})$.


\end{remark}

All of the aforementioned developments pertain to deterministic optimization problems. In the stochastic setting, \texttt{ACR} algorithms have been extended to account for noisy function and gradient information~\cite{park2020stochasticACR,tripuraneni2018stochasticcubic,scheinberg2023first}. We are interested in staying within the constructs of a TR algorithm to leverage the recent developments in efficient adaptive sampling strategy of \texttt{ASTRO}. In this paper, devising an adaptively regularized  \texttt{ASTRO} has the main challenge of controlling the effect of noise while preserving the desired convergence and complexity guarantees. In deterministic settings, minimizing the regularized Taylor model yields a decrease of order $\|\BFS_k\|^3$. However, in the stochastic regime this decrease is perturbed by noise which must therefore be controlled at the same cubic order with a sampling strategy.
In principle, this requires the sample size to be chosen as a function of the step-size. However, because the step-size is determined only after solving the subproblem, this requirement is fundamentally at odds with the mechanics of the method. Consequently, existing stochastic extensions typically fix a target (worst-case) tolerance~$\epsilon$ to ensure
$\|\BFGbar(\BFX_k,n)-\nabla f(\BFX_k)\| \le \epsilon$, and adjust the sample size $n$ as a function of~$\epsilon$ for all $k$. Adopting the sampling strategy of \texttt{ASTRO}, in this paper will alleviate the burden (i.e., complexity) of forcing the estimation error to be small for all $k$.

\subsection{Summary of Insight and Results}
In this paper, we develop a new adaptive regularization and sampling algorithm (\texttt{Reg-ASTRO})~(Algorithm~\ref{alg:ASUTRO}) for stochastic nonconvex optimization. We prove that the adaptive quadratic regularization incorporated in the subproblem of a TR algorithm maintains the $T_{\epsilon}=\mcOtilde(\epsilon^{-1.5})$ in the almost sure sense for small enough $\epsilon$. Yet, given the cost of guaranteeing a sufficiently accurate model gradient that can lead to good selection of the TR radius $\Delta_k$ in each iteration, we establish a new adaptive sampling rule in each iteration of the algorithm in~\eqref{eq:as}. Using this practical sampling condition (summarized as a higher order of the TR radius in Table~\ref{tab:summary}), we show the improved sample complexity of $W_{\epsilon}=\mcOtilde(\epsilon^{-4.5})$ in the almost sure sense for small enough $\epsilon$ (Section~\ref{sec:complexity}). This rate may appear slower than what may be known as a stochastic first-order method \cite{arjevani2023lower}. However, a few points are noteworthy in the comparison against the  stochastic first-order methods. \emph{First}, the $\mcO(\epsilon^{-4})$ of the first-order methods is often a guarantee for the expected number of gradient evaluations~\cite{arjevani2023lower}. To achieve a high-probability-type result in that framework, one needs additional post-processing steps in the algorithms and stronger assumptions on the noise such as bounded support, heavy tails, or smoothness of sample paths~\cite{ghadimi2013stochastic, liu2023high,madden2024high}. \emph{Second}, the main advantage of using second-order methods with globalization, that is, the use of function values to make the accepted points adaptive, is that it helps the robustness of the algorithm, allowing it to perform well even starting from a poor initial guess, making it not dependent on the implicit knowledge of the Lipschitz constants and letting the step-size selection be adaptive, and enabling locally fast behavior once the algorithm reaches a close enough neighborhood. Notably, none of these gains are really visible when comparing this class of algorithms in terms of iteration complexity. 

Faster expected sample complexity rate of $\mcO(\epsilon^{-3})$ relies on mean-squared-smoothness of $\BFG(\cdot,\xi)$, that is, the existence of a constant $\kappa_{GmL}$  that for any $\BFx,\BFs\in\real^d$ satisfies
\[\mbE[\|\BFG(\BFx,\xi)-\BFG(\BFx+\BFs,\xi)\|^2]\le \kappa_{GmL}^2\|\BFs\|^2.\] In this paper, we do not make such an assumption.

Other sample-path-wise smoothness assumptions in \cite{levy2021storm+,fang2018spider} involve the existence of a  constant $\kappa_{GL}>0$
that for any $\BFx,\BFs\in\real^d$ and $\xi\in\Xi$ satisfies 
\begin{equation}\label{eq:lipG}
    \|\BFG(\BFx,\xi)-\BFG(\BFx+\BFs,\xi)\|\le \kappa_{GL}\|\BFs\|.
\end{equation} 
High-probability oracle complexity results in \cite{rinaldi2024stochastic} assume the existence of $\kappa_{FL}>0$ that for any $\BFx,\BFs\in\real^d$ and $\xi\in\Xi$ satisfies 
\begin{equation}\label{eq:lipF}
    |F(\BFx,\xi)-F(\BFx+\BFs,\xi)|\le \kappa_{FL}\|\BFs\|.
\end{equation}
To fully access the benefits of such structures in the sample path, one must fix the sample paths used to evaluate the function at the incumbent and the trial step, i.e., for iteration $k$ use the same fixed random numbers $\{\xi_{k,j},\ j=1,\cdots,n\}$ in \eqref{eq:estimator}, which is a notion we refer to as the Common Random Numbers (CRN). In fact,~\cite{ha2025complexity} proves that, under CRN, \texttt{ASTRO} can reduce its sampling rate requirement to achieve $W_{\epsilon}=\mcOtilde(\epsilon^{-4})$ with even a more relaxed assumption than Lipschitz sample paths: the existence of a constant $\kappa_{vL}>0$ that for any $\BFx,\BFs\in\real^d$ satisfies 
\begin{equation}\label{eq:lipv}
    |v(\BFx)-v(\BFx+\BFs)|\le \kappa_{vL}\|\BFs\|,
\end{equation} where $v(\BFx)=\mbE[(F(\BFx,\xi)-f(\BFx))^2]=\text{Var}(F(\BFx,\xi))$.
In contrast to \eqref{eq:lipF}, in \eqref{eq:lipv} we do not demand that each sample-path is Lipschitz continuous. We only require that at the population-level (expected value) the variability behaves continuously. For CRN gains in  \texttt{Reg-ASTRO}, we only need that the sample paths are differentiable (Assumption~\ref{assum:sample-path-c1}); Lipschitz sample-paths or Lipschitz variance is no longer needed (weaker compared to \texttt{ASTRO}). However, we require that the curvature of the model starts to vanish as $k\to \infty$ 
 (Assumption~\ref{assum:Hessian_bound_delta}). In such as scenario, we develop Algorithm~\ref{alg:ASUTRO-CRN} with simpler success criteria that can also use an inexact subproblem. We prove a better almost sure sample complexity bound of $W_{\epsilon}=\mcOtilde(\epsilon^{-3.5})$ for \texttt{Reg-ASTRO} in this setting (Section~\ref{sec:crn}).

\begin{remark}
    The deterministic TR algorithms classically do not need the TR radius $\Delta_k$ to converge to zero; in fact it is important for $\Delta_k$ to stay bounded away from zero so that near the criticality region, the subproblem constraint becomes non-binding and the steps become quasi-Newton steps. In deterministic derivative-free optimization settings (outside the scope of this paper), the bounded-away $\Delta_k$ is removed and in fact since $\Delta_k$ plays multiple roles (step-size regularization and model quality control) it will have to converge to zero eventually. To handle the issue of it becoming nonbinding close to the critical region, the acceptance criteria of a trial step is augmented by a secondary requirement, namely, that $\|\BFGbar(\BFX_k,n)\|\geq \mu \Delta_k$ for some $\mu\in(0,\infty)$. In turns out in the stochastic settings, similar to the derivative-free deterministic settings, since $\Delta_k$ plays the additional role of bounding the estimation error (which leads to the required model quality), one needs to apply the same format: $\Delta_k$ must converge to zero (in some probabilistic sense) and success must proceed two conditions simultaneously: a sufficient descent as well as $\|\BFGbar(\BFX_k,n)\|\geq \mu \Delta_k$. In this paper, we develop an analogous requirement that is coherent with the adaptive regularization mechanism. 
\end{remark}

Table~\ref{tab:summary} summarizes the important sample complexity results that we achieve in the almost sure (a.s.) sense. In this paper, we will not prove an analogue of using CRN with smooth sample paths (presented for the classic case in~\cite{ha2025complexity}) given that this assumption is quite strict in practice. But the interested reader may find the a.s. $W_{\epsilon}=\mcOtilde(\epsilon^{-2})$ for small enough $\epsilon$ in such a setting worthwhile.

\begin{table}[h]
\centering
\caption{Summary of sample sizes and complexity of \texttt{ASTRO}}\label{tab:synopsis}
\begin{tabular}{l c c c c}
\toprule
Case & CRN & Noise Assumptions & Sample Size Required & Complexity, $W_\varepsilon$\\
\midrule
\multirow{2}{*}{Classic}
& N & -- & $\tilde{\mcO}(\Delta_k^{-4})$ & $\tilde{\mcO}(\varepsilon^{-6})$ a.s. \\
& Y & Lipschitz Variance & $\tilde{\mcO}(\Delta_k^{-2})$ & $\tilde{\mcO}(\varepsilon^{-4})$ a.s. \\
\cmidrule(lr){1-5}
 \multirow{2}{*}{Regularized}
& N & -- & $\tilde{\mcO}(\Delta_k^{-6})$ & $\tilde{\mcO}(\varepsilon^{-4.5})$ a.s. \\
 & Y & Sample-path Differentiability & $\tilde{\mcO}(\Delta_k^{-4})$ & $\tilde{\mcO}(\varepsilon^{-3.5})$ a.s. \\
\bottomrule
\end{tabular}
\label{tab:summary}
\end{table}

\section{Preliminaries}\label{sec:prelim}

In this section, we introduce several definitions and assumptions that will be used throughout the analysis.  We begin with the notion of fully quadratic models, which characterizes the accuracy of local approximations. We start with a notations.

\subsection{Notation and Terminology}
We use bold font for vectors; for example, $\BFx=(x_1,x_2,\cdots,x_d)\in\real^d$ denotes a $d$-dimensional real-valued vector, and $\BFe^{i} \in \real^d$ for $i=1,\dots,d$ denotes the unit  vector in the $i$-th coordinate. We use capital letters for random scalars and vectors, calligraphic fonts for sets, and sans serif fonts for matrices. Our default norm operator $\|\cdot\|$ is an $L_2$ norm in the Euclidean space. 
We denote $\mcB(\BFx;\Delta)=\{\BFy\in\real^d:\|\BFy-\BFx\|_2\leq\Delta\}$ as the closed ball of radius $\Delta>0$ with center $\BFx$. For a sequence of sets $\{\mcA_n\}$, the set $\{\mcA_n \ \text{i.o.}\}$ denotes $\limsup_{n} \mcA_n$. 
We say $f(\BFx)=\mcO(g(\BFx))$ if there exist positive numbers $\varepsilon$ and $m$ such that $|f(\BFx)|\le mg(\BFx)$ for all $\BFx$ with $0 < |\BFx| < \varepsilon$. We say $f(\BFx) = \tilde{\mcO}(g(\BFx))$ if $f(\BFx)=\mcO(g(\BFx)\lambda(g(\BFx)))$, where $\lambda(\cdot)$ is a slowly varying function, i.e., $\lim_{t \to \infty} \frac{\lambda(xt)}{\lambda(x)} =1$ for each $x$. Common examples of slowly varying functions include $\log x, \log (\log x)$, and $(\log x)^r, r>0$.

\subsection{Key Definitions}
\begin{definition} (Fully Quadratic Models)
\label{defn:fullyquadratic} Given $\BFx \in \mbR^d$ and $\Delta > 0$, a function $m:\real^d \to \real$ is a fully quadratic model of $f$ (satisfying Assumption~\ref{assum:smooth-f}) on $\mcB(\BFx;\Delta)$ if 
there exist positive constants $\kappa_{\text{eg}}$ and $\kappa_{ef}$ dependent on $\kappa_{Lg}$ and $\kappa_{L\sfH}$ but independent of $\BFx$ and $\Delta$ such that $\forall \BFs\in\mcB(\BFx;\Delta)$
\begin{equation*}
\begin{split}
    \|\nabla^2 f(\BFx+\BFs) - \nabla^2 m(\BFs)\|&\le \kappa_{\text{e}\sfH}\Delta\\
    \|\nabla f(\BFx+\BFs) - \nabla m(\BFs)\|&\le \kappa_{\text{eg}}\Delta^2\\
    |f(\BFx+\BFs) - m(\BFs)| &\le \kappa_{ef}\Delta^3   
\end{split}
\end{equation*}
\end{definition}

To formalize the stochastic structure of the algorithm and the adaptive sampling scheme, we introduce the notions of filtration and stopping time.

\begin{definition} (Filtration and Stopping Time). A filtration $\{\mcF_{k}\}_{k \geq 1}$ over a probability space $(\Omega,\mathbb{P},\mcF)$ is defined as an increasing family of $\sigma$-algebras of $\mcF$, i.e., $\mcF_{k} \subset \mcF_{k+1} \subset \mcF$ for all $k$. We interpret $\mcF_{k}$ as ``all the information available at time $k$." A filtered space $(\Omega,\mathbb{P},\{\mcF_{k}\}_{k\geq1},\mcF)$ is a probability space equipped with a filtration.
A map $N:\Omega \rightarrow \{0,1,2,\dots,\infty\}$ is called a stopping time with respect to $\mcF_{k}$ if the event $\{N = n\} := \{\omega: N(\omega) = n\} \in \mcF_{k}$ for all $n \leq \infty$.
\end{definition}

For ease of exposition, in the remainder of the paper, we denote the function value stochastic error and estimation error, as well as gradient stochastic error  and estimation error as 
\begin{align*}
    E_{k,j} = F(\BFX_k,\xi_j) - f(\BFX_k),\ & \quad \Ebar_k(n) = \Fbar(\BFX_k,n) - f(\BFX_k); \\
    \BFE_{k,j}^g = \BFG(\BFX_k,\xi_j) - \nabla f(\BFX_k),\ & \quad \BFEbar_k^g(n) = \BFGbar(\BFX_k,n) - \nabla f(\BFX_k).
\end{align*}
The following assumption formalizes the stochastic noise structure and ensures suitable moment bounds for applying concentration inequalities in the subsequent analysis.

\begin{assumption}
    \label{assum:martingale}
    For a $\mcF_{k}$-measurable iterate $\BFX_k$, let $\mcF_{k,j}$ be the intermediate $\sigma$-algebras after each observation $j$ at iteration $k$ such that $\mcF_{k}\subseteq \mcF_{k,1}\subseteq \mcF_{k,2}\subseteq \cdots\subseteq \mcF_{k+1}$. The stochastic errors $E_{k,j}$ satisfy $\mbE[E_{k,j}|\mcF_{k,j-1}]=0$, and there exist constants  $\sigma_f^2>0$ and $b_f>0$ such that 
    \begin{equation}
    \mbE[|E_{k,j}|^m|\mcF_{k,j-1}]\leq\frac{m!}{2}b_f^{m-2}\sigma_f^2,\ \forall m=2,3,\cdots.\label{eq:subexp-f}
    \end{equation} 
Furthermore, let $[\BFE_{k}^g]_r$ be the $r$-th element of the stochastic gradient error. Then for all $i,j$, $[\BFE_{k,j}^g]_r$ satisfy, for any $r\in \{1,\dots,d\}$,  $\mbE[[\BFE_{k,j}^g]_r \ \vert\ \mcF_{k,j-1}]=0$. There also exist constants $\sigma_g^2>0$ and $b_g>0$ such that for a fixed $n$ and any $r\in \{1,\dots,d\}$, 
    \begin{equation}
        \mbE\left[|[\BFE_{k,j}^g]_r|^m|\mcF_{k,j-1}\right]\leq\frac{m!}{2}b_g^{m-2}\sigma_g^2,\ \forall m=2,3,\cdots.\label{eq:subexp-g}
    \end{equation} 
\end{assumption}

The inequalities in~\eqref{eq:subexp-f} and~\eqref{eq:subexp-g} impose restrictions on the growth of the moments of $E_{k,j}$ and $\BFE_{k,j}^g$, but these conditions are quite mild and are satisfied in many practical stochastic modeling settings. In particular, they hold whenever the underlying noise variables are light-tailed, meaning that their tails decay at least exponentially fast. A sufficient condition for this is that a real-valued random variable $X$ admits a moment generating function in a neighborhood of zero. Equivalently, there exists $\alpha>0$ such that for sufficiently large $x$, \[\mbP(X>x)\le \exp\{-\alpha x\}.\] 
This property is satisfied by most commonly used distributions in simulation and stochastic optimization, including the Gaussian, exponential, gamma, and any bounded random variable. In fact, any distribution with bounded support automatically satisfies the subexponential condition. It is also worth noting that requiring the existence of all moments of $X$ is strictly weaker than requiring the existence of a moment generating function in a neighborhood of zero. Thus, the subexponential-type bounds in~\eqref{eq:subexp-f} and~\eqref{eq:subexp-g} are compatible with a broad class of light-tailed distributions.


\subsection{Existing Results}
Here, we list three important results commonly invoked in the analysis. For background material on Bernstein-type inequalities for martingales, see~\cite{1999pen,2015fangraliu,1995van}.

\begin{lemma}[Lemma 4.1.1 in~\cite{nesterov2018lectures}]
\label{lem:smooth-bound}
Suppose that Assumption~\ref{assum:smooth-f} holds. 
Then, for all $\BFx, \BFy \in \mathbb{R}^n$, we have
\[
\bigl\|\nabla f(\BFy) - \nabla f(\BFx) - \nabla^2 f(\BFx)(\BFy - \BFx)\bigr\|
\le \tfrac{\kappa_{L\sfH}}{2}\,\|\BFy - \BFx\|^2.
\]
\end{lemma}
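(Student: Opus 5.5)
The plan is to use the integral form of the mean value theorem along the segment from $\BFx$ to $\BFy$. Since $f$ is twice continuously differentiable under Assumption~\ref{assum:smooth-f}, the gradient is continuously differentiable. Applying the fundamental theorem of calculus to the map $t\mapsto \nabla f(\BFx+t(\BFy-\BFx))$ on $[0,1]$ gives
\[
\nabla f(\BFy)-\nabla f(\BFx)=\int_0^1 \nabla^2 f\bigl(\BFx+t(\BFy-\BFx)\bigr)(\BFy-\BFx)\,\mathrm{d}t .
\]
Writing $\nabla^2 f(\BFx)(\BFy-\BFx)=\int_0^1 \nabla^2 f(\BFx)(\BFy-\BFx)\,\mathrm{d}t$ and subtracting, the quantity to bound becomes
\[
\int_0^1 \bigl[\nabla^2 f(\BFx+t(\BFy-\BFx))-\nabla^2 f(\BFx)\bigr](\BFy-\BFx)\,\mathrm{d}t .
\]

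Next I move the Euclidean norm inside the integral using the triangle inequality for Bochner/Riemann integrals of vector-valued functions. Then I bound the matrix–vector product by the operator norm times $\|\BFy-\BFx\|$. Finally I apply the Lipschitz continuity of $\nabla^2 f$ from Assumption~\ref{assum:smooth-f}, which gives
\[
\bigl\|\nabla^2 f(\BFx+t(\BFy-\BFx))-\nabla^2 f(\BFx)\bigr\|\le \kappa_{L\sfH}\, t\,\|\BFy-\BFx\| .
\]
The integrand is therefore bounded by $\kappa_{L\sfH}\, t\,\|\BFy-\BFx\|^2$. Integrating, $\int_0^1 t\,\mathrm{d}t=\tfrac12$, which yields the stated bound $\tfrac{\kappa_{L\sfH}}{2}\|\BFy-\BFx\|^2$.

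There is no real obstacle. The only point needing care is justifying the integral representation: it requires continuity of $\nabla^2 f$ along the segment, which Assumption~\ref{assum:smooth-f} guarantees. It also requires that the norm of an integral is at most the integral of the norm, which is standard. (The statement's $\mathbb{R}^n$ should be read as $\real^d$.)
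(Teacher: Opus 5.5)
Your proposal is correct and is the standard argument: integral form of the mean value theorem for $\nabla f$ along the segment, the norm moved inside the integral, the Hessian-Lipschitz bound $\kappa_{L\sfH}\,t\,\|\BFy-\BFx\|$, and $\int_0^1 t\,\mathrm{d}t=\tfrac12$. The paper gives no proof of its own and simply cites Nesterov, whose proof is this same argument.
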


\begin{lemma}[Borel-Cantelli's First Lemma] 
\label{lem:borel-cantelli}
Let $(A_n)_{n\in \mbN}$ be a sequence of events defined on a probability space. If $\sum_{n=1}^\infty \mbP\left(A_n\right) < \infty$, then $\mbP\left(A_n \text{ i.o.}\right)=0$.     
\end{lemma}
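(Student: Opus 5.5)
The plan is to express the event $\{A_n \text{ i.o.}\}$ through its standard set-theoretic form and then bound its probability by the tails of the convergent series. First, by the notation fixed in the Notation and Terminology subsection,
\[
\{A_n \text{ i.o.}\} = \limsup_{n} A_n = \bigcap_{N=1}^{\infty} \bigcup_{n=N}^{\infty} A_n .
\]
Consequently, for every fixed $N\in\mbN$ we have the inclusion $\{A_n \text{ i.o.}\} \subseteq \bigcup_{n\ge N} A_n$. By monotonicity of the probability measure, $\mbP(A_n \text{ i.o.}) \le \mbP\bigl(\bigcup_{n\ge N} A_n\bigr)$ holds for each $N$.

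Next, I will apply countable subadditivity (the union bound) to the right-hand side. This gives
\[
\mbP(A_n \text{ i.o.}) \le \sum_{n=N}^{\infty} \mbP(A_n) \qquad \text{for every } N\in\mbN .
\]
The hypothesis $\sum_{n=1}^\infty \mbP(A_n)<\infty$ means that the tail sums $\sum_{n\ge N}\mbP(A_n)$ tend to $0$ as $N\to\infty$, since the tails of a convergent series of nonnegative terms vanish. Letting $N\to\infty$ in the displayed inequality, whose left-hand side does not depend on $N$, yields $\mbP(A_n \text{ i.o.})\le 0$. Hence $\mbP(A_n \text{ i.o.})=0$.

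There is no genuine obstacle here. The only points requiring care are the correct identification of $\limsup_n A_n$ as the intersection of the tail unions, and justifying that the bound holds uniformly in $N$ before passing to the limit. Both are immediate from the definitions and the basic properties of a probability measure, namely monotonicity and countable subadditivity.
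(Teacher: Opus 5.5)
Your proof is correct. The paper gives no proof of this lemma; it lists it among the ``Existing Results'' and cites it as standard. Your argument, combining $\limsup_n A_n \subseteq \bigcup_{n\ge N} A_n$ for every $N$, countable subadditivity, and the vanishing tails of a convergent series, is exactly the textbook proof that citation relies on.
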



\begin{lemma}[Bernstein's Inequality for Martingales]\label{lem:dependent-bernstein} Suppose that $(\xi_i,\mcF_i)_{\{i\ge 0\}}$ is a martingale difference sequence on a given probability space $(\Xi,\mcF,P)$, with $\xi_0=0$ and $\{\Xi,\emptyset\} = \mcF_0 \subseteq \mcF_1 \subseteq \mcF_2 \subseteq \cdots \subseteq \mcF_n \subseteq \mcF$ a sequence of increasing filtration and $\mbE\left[\xi_i \vert \mcF_{i-1}\right] = 0$. Suppose also that there exist $b>0$ and $\sigma^2>0$ such that $\mbE\left[ |\xi_i|^m \vert \mcF_{i-1} \right] \leq \frac{1}{2}\, m! \, b^{m-2} \sigma^2$ for $m=2,3,\cdots$.
Then for any $c>0$ and $n\in \mbN$, $$\mbP\left(\sum_{i=1}^n \xi_i \geq nc \right) \leq \exp\left\{- \frac{nc^2}{2(bc + \sigma^2)} \right\}.$$
\end{lemma}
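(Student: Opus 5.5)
The statement to prove is the martingale Bernstein inequality of Lemma~\ref{lem:dependent-bernstein}. I will use the Chernoff (exponential moment) method combined with iterated conditioning. For $\lambda\in(0,1/b)$, Markov's inequality applied to $\exp\{\lambda\sum_{i=1}^n\xi_i\}$ gives
\[
\mbP\Bigl(\sum_{i=1}^n \xi_i\ge nc\Bigr)\le e^{-\lambda nc}\,\mbE\Bigl[\exp\Bigl\{\lambda\sum_{i=1}^n\xi_i\Bigr\}\Bigr].
\]
The proof then has two ingredients: a conditional bound on the moment generating function of each increment, and an optimization over $\lambda$.

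\textbf{Conditional MGF bound.} Fix $i$ and expand $e^{\lambda\xi_i}$ as a power series. Because $\mbE[\xi_i\vert\mcF_{i-1}]=0$, the linear term vanishes. Using the moment condition and $\lambda b<1$, the conditional version of monotone convergence (or Fubini) justifies exchanging sum and expectation:
\[
\mbE[e^{\lambda\xi_i}\vert\mcF_{i-1}]\le 1+\sum_{m\ge2}\frac{\lambda^m}{m!}\cdot\frac{m!}{2}b^{m-2}\sigma^2 = 1+\frac{\lambda^2\sigma^2}{2(1-\lambda b)}\le \exp\Bigl\{\frac{\lambda^2\sigma^2}{2(1-\lambda b)}\Bigr\}.
\]
The right-hand side is deterministic, which is what makes the next step work. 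Write $S_k=\sum_{i\le k}\xi_i$. Since $S_{n-1}$ is $\mcF_{n-1}$-measurable, the tower property gives
\[
\mbE[e^{\lambda S_n}]=\mbE\bigl[e^{\lambda S_{n-1}}\,\mbE[e^{\lambda\xi_n}\vert\mcF_{n-1}]\bigr]\le \exp\Bigl\{\frac{\lambda^2\sigma^2}{2(1-\lambda b)}\Bigr\}\,\mbE[e^{\lambda S_{n-1}}].
\]
Inducting down to $S_0=0$ yields $\mbE[e^{\lambda S_n}]\le\exp\{n\lambda^2\sigma^2/(2(1-\lambda b))\}$.

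\textbf{Choice of $\lambda$.} Combining the two bounds,
\[
\mbP\Bigl(\sum_{i=1}^n \xi_i\ge nc\Bigr)\le \exp\Bigl\{-n\Bigl(\lambda c-\frac{\lambda^2\sigma^2}{2(1-\lambda b)}\Bigr)\Bigr\}.
\]
I would not optimize exactly. Instead take $\lambda=c/(bc+\sigma^2)$, which lies in $(0,1/b)$. Then $1-\lambda b=\sigma^2/(bc+\sigma^2)$, so
\[
\frac{\lambda^2\sigma^2}{2(1-\lambda b)}=\frac{c^2}{2(bc+\sigma^2)},
\qquad
\lambda c-\frac{\lambda^2\sigma^2}{2(1-\lambda b)}=\frac{c^2}{bc+\sigma^2}-\frac{c^2}{2(bc+\sigma^2)}=\frac{c^2}{2(bc+\sigma^2)}.
\]
This gives exactly the stated bound $\exp\{-nc^2/(2(bc+\sigma^2))\}$.

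\textbf{Main obstacle.} The delicate step is justifying the series exchange inside the conditional expectation, including integrability of $e^{\lambda\xi_i}$. I would handle this by bounding $e^{\lambda\xi_i}\le\sum_m|\lambda\xi_i|^m/m!$ and applying conditional monotone convergence to the nonnegative terms. The moment bound makes that series summable precisely because $\lambda b<1$.
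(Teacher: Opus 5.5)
Your proof is correct. The paper gives no proof to compare it with: it lists this lemma under existing results and cites de la Pe\~{n}a, Fan--Grama--Liu, and van de Geer. Your argument is the standard one behind those references:
\begin{itemize}
\item a Chernoff bound;
\item a deterministic bound $\exp\{\lambda^2\sigma^2/(2(1-\lambda b))\}$ on each conditional MGF;
\item induction via the tower property;
\item the explicit, non-optimized choice $\lambda=c/(bc+\sigma^2)\in(0,1/b)$.
\end{itemize}
The algebra checks out and reproduces exactly the constant $2(bc+\sigma^2)$ in the exponent. The citation buys the paper brevity. Your argument shows that this specific constant follows from the stated conditional moment hypotheses alone.

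There is one small tidy-up in the step you flagged. For the signed series $\sum_m(\lambda\xi_i)^m/m!$, swapping sum and conditional expectation is a conditional \emph{dominated} convergence step, not a monotone one. The dominating variable is $e^{|\lambda\xi_i|}$. To show $\mbE[e^{|\lambda\xi_i|}\vert\mcF_{i-1}]<\infty$ you also need the $m=1$ term, which the moment hypothesis (stated only for $m\ge 2$) does not cover. It follows from conditional Jensen: $\mbE[|\xi_i|\vert\mcF_{i-1}]\le(\mbE[\xi_i^2\vert\mcF_{i-1}])^{1/2}\le\sigma$. This gives the deterministic bound $1+\lambda\sigma+\lambda^2\sigma^2/(2(1-\lambda b))$, so $e^{\lambda\xi_i}$ is also unconditionally integrable and the conditional expectations are well defined.
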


\section{Reg-ASTRO}

We know that TR methods have attained optimal complexity rate of $\mcO(\epsilon^{-2})$ for nonconvex problems in both deterministic and stochastic settings under mild assumptions~\cite{ha2025complexity}. Any further improvement of this rate requires additional regularization schemes. In fact, as stated before, an  $\mcO(\epsilon^{-3/2})$ iteration complexity is possible in the deterministic setting~\cite{jiang2026beyond} and the main objective is to investigate how to adjust such an algorithm for a stochastic setting to not only obtain the same results but also without an overburdening sampling cost. Hence, we first describe how a TR algorithm could be enhanced with an adaptive quadratic regularization.

\subsection{Deterministic Trust Region With Adaptive Regularization}\label{sec:utr}
Recall that at iteration $k$ of a classical TR algorithm, a local quadratic model, typically constructed with Taylor expansion, is minimized within a TR ball constraint
\begin{align*}
    \min_{\BFs\in \real^d}\quad & m_k(\BFs):= f(\BFx_k) +\BFs^\intercal \nabla f(\BFx_k)+\frac{1}{2} \BFs^\intercal \nabla^2 f(\BFx_k) \BFs \\
    \text{s.t.}\quad & \|\BFs\|\leq \Delta_k.
\end{align*} The (approximate) solution to the above, $\BFs_k$ (also known as the trial step) is then evaluated as $f(\BFx_k+\BFs_k)$ and the progress observed is compared in a ratio with the progress predicted 
\[\rho_k=\frac{f(\BFx_k+\BFs_k)-f(\BFx_k)}{m_k(\BFs_k)-m_k(\boldsymbol{0})}\]
to make a decision about updating the TR and accepting the trial step. The step size control is done by increasing $\Delta_k$ (the TR radius) when successful and decreasing it when unsuccessful.

Jiang et al.~\cite{jiang2026beyond} provide a \emph{unified} TR framework (unified in the sense of accommodating both convex and nonconvex functions) by defining the subproblem as 
\begin{align}
    \min_{\BFs\in\real^d}\quad & m_k(\BFs)+ \frac{\tau_k}{2}\sqrt{\|\nabla f(\BFx_k)\|}\|\BFs\|^2 := f(\BFx_k) + \nabla f(\BFx_k)^\intercal \BFs \\
    &\quad\quad\quad\quad\quad\quad\quad\quad\quad\quad\quad\quad\quad\quad + \frac{1}{2}\BFs^\intercal (\nabla^2 f(\BFx_k)+\tau_k \sqrt{\|\nabla f(\BFx_k)\|} \sfI_d)\BFs \nonumber\\
    \text{s.t.}\quad & \|\BFs\|\le \Delta_k:=r_k \|\nabla f(\BFx_k)\|^{1/2}, \label{eq:det-quad-reg-subp}
\end{align} with two regularization hyperparameters $\tau_k$ and $r_k$. Intuitively, the regularization can tilt the direction of search roughly along $ (\nabla^2 f(\BFx_k) + \sqrt{\|\nabla f(\BFx_k)\|}\sfI_d)^{-1}\nabla f(\BFx_k)$ to find a solution that could sufficiently reduce the function relative to the reduction in the original (possibly nonconvex) quadratic model. Note, in lieu of updating $\Delta_k$ directly (as a bound on the step size), updating these hyperparameters applies control both on the step size and on the direction. 
The \texttt{UTR} (Universal Trust-Region) Algorithm proceeds by deeming an iteration successful if there is at least a monotone decrease, i.e., $f(\BFx_k+\BFs_k)<f(\BFx_k)$ and using another penalty hyperparameter $\vartheta_k$ if the subproblem solution $\BFs_k$ fails to achieve either of the success criteria below 
\begin{align}
    f(\BFx_k+\BFs_k) - f(\BFx_k) & \leq -\frac{\eta}{\vartheta_k}\|\nabla f(\BFx_k)\|^{3/2} \label{eq:sufficient-reduction} \\
    \|\nabla f(\BFx_k+\BFs_k)\| & \leq \zeta\|\nabla f(\BFx_k)\| \label{eq:gradient-contraction}
\end{align} for some $\eta\in(0,1)$ and $\zeta\in(0,1)$. The condition \eqref{eq:sufficient-reduction} resembles a sufficient reduction criterion common for typical TR algorithms, provided that the model reduction can be as good as a factor of $\|\nabla f(\BFx_k)\|^{3/2}$---this can be proven when solving~\eqref{eq:det-quad-reg-subp} exactly to fully exploit the curvature information. Even without the sufficient reduction of condition \eqref{eq:sufficient-reduction}, the iterate is still considered successful if condition \eqref{eq:gradient-contraction} holds, implying gradient contraction. 
The monotonic decrease requirement for success is an important requirement to find an upper bound for $\|\nabla f(\BFx_k)\|$ and for the number of successful iterations. 

\texttt{UTR} decreases the penalty hyperparameter $\vartheta_k$ after each iteration to reduce the regularization effect in the subproblem, while the effect on the TR radius may vary (it can increase or decrease). Iterations are all successful, as an inner loop keeps increasing $\vartheta_k$ until at least one of the two criteria is satisfied. 
The updating of $\tau_k$ and $r_k$ is more complicated and case by case using the knowledge of the smallest eigenvalue of $\nabla^2 f(\BFx_k)$ and for proving convergence to second-order stationarity (see Table~2 in~\cite{jiang2026beyond}).

\subsection{Algorithm and Main Ingredients for the Stochastic Setting}\label{sec:alg-ing}
In this paper, we seek a simplified algorithm with a single hyperparameter $\Lambda_k$ that can be utilized for the stochastic setting and convergence to first-order stationarity: an iterate is $\epsilon$-stationary if $\|f(\BFX_k)\|\le \epsilon$. 
This yields a simpler adaptive regularization that essentially decreases $\Lambda_k$ when successful and increases it otherwise. Importantly, we devise the algorithm not to rely on the knowledge of eigenvalues or the Lipschitz constant of the Hessian. Yet, conditions \eqref{eq:sufficient-reduction} and \eqref{eq:gradient-contraction} will need to be updated since we no longer have access to $\nabla f(\BFx_k)$. Moreover, we do not enforce monotone decrease as another success criteria as in \texttt{UTR}; instead we need to prove that it holds eventually (for large $k$) a.s. Therefore, at least in the early iterations our algorithm may be \emph{non-monotonic} given that it can accept a candidate point merely on the basis of gradient contraction (which is accompanied by an additional criticality condition to handle stochasticity). Another feature of our algorithm is that we do not cap $\Delta_k$ with a $\Delta_{\max}$ when solving the subproblem, although the \emph{preliminary} radius, as will be described below, will be capped by a $\Delta^{\pre}_{\max}$.

We list the new stochastic algorithm in Algorithm~\ref{alg:ASUTRO}. The inputs to the algorithm are an initial guess $\BFx_{0}$, initial and maximum TR radii $\Delta_{0}^\pre,\Delta^{\pre}_{\max}>0$, success threshold $\eta\in(1/4,1)$, step-size lower bound coefficient $\theta \in (0, 1)$, parameter update coefficients $\gamma_1>1>\gamma_2>0$, regularization parameter lower bound $ 0< \Lambda_{\min}$, 
gradient error scaler $c_g>0$, an adaptive sampling constant $\kappa_a>0$, sample size inflation sequence $\lambda_k = \mcO((\log k)^{1+\epsilon_{\lambda}})$, and diminishing tolerance sequence $\varepsilon_k = \Theta(k^{-2/3}) $. We will describe the algorithm steps and the role of these inputs below.

\begin{algorithm}[htp]  \scriptsize 
\caption{\texttt{Reg-ASTRO}: Adaptively Regularized \texttt{ASTRO} with Exact Subproblem}
\label{alg:ASUTRO}
\begin{algorithmic}[1]
\Require $\BFx_{0}, \Delta_{0}^\pre,\Delta_{\max}^\pre, \sigma_0, \Lambda_{\min}, \eta\in(\frac{1}{4},1), \mu>0, \theta \in (0, 1), \gamma_1>1>\gamma_2>0, \lambda_k >1$.

\State Estimate $\BFGbar_0^\pre$ using $\Delta_0^\pre$. Set $\Lambda_0=\max\left(\Lambda_{\min},\frac{\|\BFGbar_0^\pre\|}{16(\Delta_0^\pre)^2}\right)$ and $k=0$. \label{algstep:first-estimate-parameter}
\For{$k=0,1,2,\cdots$}
\State Evaluate $\Fbar_k=\Fbar(\BFX_k,N_k),\ \BFGbar_k=\BFGbar(\BFX_k,N_k)$ using \eqref{eq:sig-mx} and \eqref{eq:delta-tilde} with sample size
\begin{equation}
    N_k=\min\left\{n:\frac{\sigma_{\text{mx}}(\BFX_k,n)}{\sqrt{n}}\leq \frac{\kappa_a}{\sqrt{\lambda_k}} \Deltatilde_k^3(n)\right\}.\label{eq:as}
\end{equation} \label{algstep:evaluate}
\State Find a finite-difference approximation Hessian $\sfH_k\in\real^{d\times d}$ 
to construct the model
\begin{equation}
    M_k(\BFs) = \Fbar_k+\BFs^\intercal \BFGbar_k + \frac{1}{2}\BFs^\intercal \sfH_k \BFs.
\label{eq:model} 
\end{equation}
\label{ASUTRO:model}

\State Set $\Delta_k = \sqrt{\frac{\|\BFGbar_k\|}{16 \Lambda_k}}$ and the trial point $\BFX_k^\s=\BFX_k+\BFS_k$ where 
\begin{equation}
\BFS_k=\argmin\left\{M_k(\BFs)+\frac{1}{2}\Lambda_k \Delta_k\|\BFs\|^2:\ \BFs\in\real^d, \|\BFs\|\leq \Delta_k\right\}. \label{eq:subproblem}    
\end{equation}
 \label{ASUTRO:subproblem}

\State Evaluate $\Fbar_k^\s=\Fbar(\BFX_k^\s,N_k^\s)$ and $ \BFGbar_k^\s=\BFGbar(\BFX_k^\s,N_k^\s)$ with
\begin{equation}
    N_k^\s=\min\left\{n:\frac{\sigma_{\text{mx}}(\BFX_k^\s,n)}{\sqrt{n}}\leq \frac{\kappa_a}{\sqrt{\lambda_k}} \Delta_k^3\right\},\label{eq:as-candidate}
\end{equation}
and compute the success ratio 
\begin{equation}
    \rho_k = \frac{\Fbar_k - \Fbar_k^\s}{M_k(\boldsymbol{0})-M_k(\BFS_k)
}.\label{eq:success-ratio}
\end{equation}

\If{($\underbrace{\rho_k > \eta \text{ and } \|\BFS_k\| \ge  \theta \Delta_k}_{\text{Criteria 1}}$) or ($\underbrace{\|\BFGbar_k^\s\| \le \eta \|\BFGbar_k\| \text{ and } \Lambda_k > \max\{\mu\|\BFGbar_k\|,\lambda_k\}}_{\text{Criteria 2}}$)}

\label{ASUTRO:successful-iteration-condition}
\State Update $\BFX_{k+1} = \BFX_k^s$, $\Lambda_{k+1} = \max\left\{\gamma_2 \Lambda_k, \Lambda_{\min}\right\}$, and  $\BFGbar_{k+1}^\pre=\BFGbar_{k}^\s$. [Successful]

\Else
\State Update $\BFX_{k+1} = \BFX_k$, $\Lambda_{k+1} = \gamma_1 \Lambda_k$, and $\BFGbar_{k+1}^\pre=\BFGbar_{k}$. [Unsuccessful]

\EndIf
\State Set $\Delta_{k+1}^\pre = \min\left\{\sqrt{\frac{\|\BFGbar_{k+1}^\pre\|}{16\Lambda_{k+1}}},\Delta_{\max}^\pre\right\}$ and $k = k+1$.
\label{algstep:update-delta}
\EndFor
\end{algorithmic}
\end{algorithm}

Recall, the sample size $N_k$ for \eqref{eq:estimator} in \texttt{ASTRO}~\cite{ha2025complexity} is a random variable sequentially selected to retain the estimation error in the function values and gradients as commensurate with the \emph{improvement}, which would ensure (in some probabilistic sense)
\begin{align}\label{eq:classic-descent}
    \mcO(\Delta_k^2) = f(\BFX_{k})-f(\BFX_{k+1})&\ge \Fbar(\BFX_{k},N_k)-\Fbar(\BFX_{k+1},N_k) \\ & - |f(\BFX_{k})-\Fbar(\BFX_{k},N_k)-(f(\BFX_{k+1})-\Fbar(\BFX_{k+1},N_k))|.\nonumber
\end{align} In particular, $N_k$ is the smallest sample size satisfying a $\Delta_k$-prescribed precision 
\begin{equation}\label{eq:original-as}
    N_k = \min\left\{n:\ \underbrace{n^{-1/2} \sigma_{\text{mx}}(\BFX_k,n)}_{\text{estimation error}} \le \phi(k, \Delta_k)\right\},
\end{equation} where $\phi$ is a function that is determined based on the properties of the stochastic function value and gradient as well as the possibility of applying variance reduction. In the most rudimentary form of the classic \texttt{ASTRO}, $\phi(k, \Delta_k)=(\log k)^{-1}\Delta_k^2$, ensuring that the estimation error is only smaller than the required order of descent by a logarithmic factor of $k$. 

In the regularized setting, first observe that in lieu of \eqref{eq:classic-descent} we ultimately want $f(\BFX_{k})-f(\BFX_{k+1})=\mcO(\Delta_k^3)$---provided that we keep $\Delta_k$ and $\|\nabla f(\BFx_k)\|^{1/2}$ in tandem with a high probability. Hence the right-hand-side must look like $\phi(k, \Delta_k)=(\log k)^{-1}\Delta_k^3$. Before starting iterations, we use an initial sample size of $n_0=\left(\frac{\sigma_0}{\kappa_a(\Delta_0^\pre)^3}\right)^2$ to estimate the gradient $\BFGbar_0^\pre=\BFGbar(\BFx_0,n_0)$ at the initial solution and set the initial regularization parameter in Step~\ref{algstep:first-estimate-parameter} of Algorithm~\ref{alg:ASUTRO}. We will henceforth use this notation simplification, i.e., $\BFGbar_k^\pre,\BFGbar_k$ and apply the same rule for the function estimate. We will also use the notation $\Fbar_k^s,\BFGbar_k^\s$ for the function and gradient estimates at the trial point, i.e., $\Fbar(\BFX_k+\BFS_k,N_k^s)$ and $\BFGbar(\BFX_k+\BFS_k,N_k^s)$.

A difficulty in applying an adaptive sampling rule for \texttt{Reg-ASTRO} is to determine the sample size $N_k$ using $\Delta_k=\|\BFGbar_k\|^{1/2}(16\Lambda_k)^{-1/2}$, but as noted in Step~\ref{algstep:evaluate}, $\|\BFGbar_k\|$ itself is an estimate requiring knowledge of the sample size $N_k$ causing a circular dependence. If we use the intermediate gradient estimates after each new sample $\|\BFGbar(\BFX_k,n)\|$, then the right-hand-side of our sampling rule will also be random and hard to analyze. Therefore, we introduce a preliminary TR radius termed $\Deltapre$, which uses $\BFGbar_k^\pre$---the estimate of the current iterate at the end of the previous iteration. But since ultimately $\Delta_k$ will be needed to connect the estimation error and deterministic error, we instead require that the estimation error be bounded by an appropriate power $\alpha$ of $\Deltatilde_k(n)$ defined as 
\begin{equation}\label{eq:delta-tilde}
    \Deltatilde_k(n)=\max\left(\sqrt{\frac{\varepsilon_k}{\Lambda_k}},\min\left(\Deltapre, \sqrt{\frac{\|\BFGbar(\BFX_k,n)\|}{16 \Lambda_k}}-c_g\Deltapre\right)\right).
\end{equation}
for some constant $c_g>0$ and a diminishing deterministic tolerance $\varepsilon_k$. 

The sampling rule~\eqref{eq:as} is a critical contribution of this work. Suppose we use 
\begin{align}
    N_k=\min\left\{n:\frac{\sigma_{\text{mx}}(\BFX_k,n)}{\sqrt{n}}\leq \frac{\kappa_a}{\sqrt{\lambda_k}} \min\left(\Deltapre, \sqrt{\frac{\|\BFGbar(\BFX_k,n)\|}{16 \Lambda_k}}-c_g\Deltapre\right)^3\right\}\label{eq:incomplete-as}
\end{align}
instead of \eqref{eq:as}. Then using the proof arguments in Section~\ref{sec:analyze-estimates} and simplifying the notation $\BFEbar_k^g=\nabla f(\BFX_{k})-\BFGbar(\BFX_{k},N_k)$ and $\Ebar_k=f(\BFX_{k})-\Fbar(\BFX_{k},N_k)$, we prove that there is a constant $\kappa_{eg}>0$ that satisfies almost surely  for large enough $k$
\[\|\BFEbar_k^g\|\leq \kappa_{eg}(\Deltapre)^\alpha \text{ and }\|\BFEbar_k^g\|\leq \kappa_{eg}(\Delta_k - c_g\Deltapre)^\alpha\Rightarrow \|\BFEbar_k^g\|\leq \frac{\kappa_{eg}}{\left(c_g+1\right)^{\alpha}}\Delta_k^{\alpha}.\] The problem is that we may have a negative right-hand-side in \eqref{eq:incomplete-as}. In that case, we need to have a positive lower bound for the right-hand-side that grows at the desired rate. This right-hand-side lower bound will be determined by $\sqrt{\frac{\varepsilon_k}{\Lambda_k}}$. 
We make a crucial assumption that for sufficiently large $k$ we will have $\|\nabla f(\BFX_k)\|> \varepsilon_k$ where $\varepsilon_k = \Theta(k^{-2/3})$. Notice that this means that even if $\|\nabla f(\BFX_k)\|\to 0$, it will do so at a slower rate than $\varepsilon_k$, which implies that $T_{\varepsilon}$  will be larger than $\epsilon^{-3/2}$ in some probabilistic sense, which is a result one expects in the deterministic regularization setting~\cite{gratton2025yet}. 
\begin{assumption}
\label{assum:grad-dominates}
Let $\varepsilon_k=c_* k^{-2/3}$ with $c_*>0$. For every $\omega \in\Omega\backslash\Omega_0$, where $\Omega_0$ is the largest set of measure $0$, there exists $K(\omega)<\infty$ with
\begin{equation*}
\varepsilon_k < \|\nabla f_k(\omega)\|\text{ for all }k\ge K(\omega).
\end{equation*}
\end{assumption}
Why does this assumption make sense? Observe that if such a $K(\omega)$ does not exist, then the set $\mcJ(\omega)=\{k\in\mathbb N:\ \|\nabla f_k(\omega)\|\le \varepsilon_k\}$ is a nonempty infinite subsequence of iterates, and each $k\in \mcJ(\omega)$ is indeed the stopping time $T_{\varepsilon_k}(\omega)$. This then implies that $T_{\varepsilon_k}(\omega) \varepsilon_k^{3/2} \le c_*^{3/2}$ for all $k\in \mcJ(\omega)$, which is reminiscent, although weaker, of the optimal worst-case complexity result in the convex settings. The fact that this problem in nonconvex and stochastic suggests that such an optimal complexity rate is optimistic and not achievable, as evidenced even in the deterministic settings by~\cite{gratton2025yet}, for example. 
The resulting adaptive sampling rule \eqref{eq:as} will ultimately lead to eventually having both $\|\BFEbar_k^g\|\leq \kappa_{eg}\Delta_k^\alpha$ and $|\Ebar_k|\leq \kappa_{ef}\Delta_k^\alpha$ for given constants $\kappa_{eg},\kappa_{ef}>0$ and $\alpha\in\{0,1,2,3\}$ almost surely (see Theorem~\ref{thm:bounded-error}).


Note, we will need a stronger model gradient accuracy of $\mcO(\Delta_k^3)$ than the typical $\mcO(\Delta_k^2)$ accuracy of fully-quadratic models. This is because of the stochastic error causing additional complication in the analysis, as will be seen in use of \eqref{eq:true-reduction} to prove a.s. eventual monotonicity. We now provide more insight about this stronger requirement. With the exact solution to the subproblem~\eqref{eq:subproblem} we can only have such a lower-bound for the step-size:
\begin{equation}
\label{eq:stepsize_lb}
    \| \BFS_k \| \ge \frac{16 \Lambda_k \Delta_k^2}{\kappa_{\sfH} + \sqrt{\Lambda_k \|\BFGbar_k\|}}.
\end{equation}
To establish monotonicity for large iterations that are successful due to gradient contraction (Lemma~\ref{lem:monotone_decreasing}), we rely on this bound. However, in the presence of stochastic error, exploiting it to guarantee the desired reduction necessitates the stronger $\mcO(\Delta_k^3)$ model gradient accuracy.
On the other hand, in the special case of Section~\ref{sec:crn}, we show that the step-size is directly lower bounded by a factor of $\Delta_k$ and hence this stronger requirement on the model gradient accuracy is removed.

The Hessian approximation in Step~\ref{ASUTRO:model} of Algorithm~\ref{alg:ASUTRO} is a finite-difference approximation
using the gradient estimates at the current iterate and its perturbations. Specifically, define the gradient estimation errors $\BFEbar^{g,j}_{k} := \BFGbar(\BFX_k + \Delta_k \BFe_j, N_k^j) - \nabla f(\BFX_k + \Delta_k \BFe_j)$ for each $j \in \{1, \dots, d\}$, where the sample size $N_k^j$ is defined according to~\eqref{eq:as-candidate}, with $\BFX_k^\s$ replaced by $\BFX_k + \Delta_k \BFe_j$. Then for each $j \in \{1, \dots, d\}$, the $j$-th column of $\sfH_k$ can be decomposed into a true gradient difference and an estimation error term:
\begin{equation} \label{eq:hess-col-decomp}
\sfH_k \BFe_j = \frac{\nabla f(\BFX_k + \Delta_k \BFe_j) - \nabla f(\BFX_k)}{\Delta_k} + \frac{\BFEbar^{g,j}_{k} - \BFEbar^{g}_{k}}{\Delta_k}.
\end{equation} 
Consequently, the precision of the second-order approximation is inherently linked to the stochastic errors associated with the gradient estimates. Lemma~\ref{lem:fd-hessian} characterizes the $\mcO(\Delta_k)$ deviation of $\mathsf{H}_k$ from the true Hessian $\nabla^2 f(\BFX_k)$ in a probabilistic sense.

Lastly, as seen in Step~\ref{algstep:update-delta} of Algorithm~\ref{alg:ASUTRO}, success leads to contraction of the regularization coefficient $\Lambda_k$ and its expansion otherwise. Much like a key result in the classic TR that small enough $\Delta_k$ leads to success and therefore $\Delta_k$ remains bounded below by a function of $\|\BFGbar_k\|$, in this setting we will need similar things, i.e., success is warranted when $\Lambda_k$ becomes large enough leading to its \emph{upper bound} as  a function of $\|\BFGbar_k\|$(see Lemma~\ref{lem:reg-ub}). Our results give the insight that in the general setting, the ultimate bound on $\Lambda_k$ ends up being an increasing logarithmic sequence. This is to ensure that without knowledge of Lipschitz constants, the algorithm will eventually be successful. That being said, under the stronger conditions of Section~\ref{sec:crn}, we can establish a fixed and uniform upper bound for $\Lambda_k$ (see Lemma \ref{lem:reg-ub-crn}) resembling similar results in the deterministic setting.

\subsection{Almost Sure Eventually Accurate Models and Estimates}\label{sec:analyze-estimates}
This section provides the important control over the stochastic error by means of the adaptive sample size. In essence, our following results guarantee wp1 that once $k$ is large enough, the required model accuracy is assured. We obtain a stronger accuracy in this paper (relative to the typical $\mcO(\Delta_k^\alpha)$ as in \cite{ha2025complexity}) that will directly aid the analysis. Namely, we show that the model accuracy is eventually $\mcO(\sqrt{(\|\nabla f_k\|/\Lambda_k)^\alpha})$ a.s. Throughout, we use the superscript $0$ to denote quantities associated with the center point (incumbent), and the superscript $\mathrm{s}$ for those associated with the trial point.

\begin{lemma}\label{lem:gradient-asfinite} 
Suppose that Assumption~\ref{assum:martingale} holds. Then for iterates $\{\BFX_k\}$ generated by Algorithm~\ref{alg:ASUTRO} and any constants $\kappa_{eg}, \kappa_{ef} > 0$ and $\alpha \in [0,3]$, the following hold:
\begin{subequations} \label{eq:est-error-all}
\begin{align}
    \mbP\left(\|\BFEbar_k^{g,0}\| > \kappa_{eg}\max\left\{\Deltapre,\sqrt{\frac{\varepsilon_k}{\Lambda_k}}\right\}^{\alpha} \text{ i.o.}\right) &= 0, \label{eq:g-est-diff} \\
    \mbP\left(|\Ebar_k^{0}| > \kappa_{ef} \max\left\{\Deltapre,\sqrt{\frac{\varepsilon_k}{\Lambda_k}}\right\}^{\alpha} \text{ i.o.}\right) &= 0. \label{eq:f-est-diff}
\end{align}
\end{subequations}
\end{lemma}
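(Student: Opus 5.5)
We follow the standard \texttt{ASTRO} almost-sure accuracy argument~\cite{ha2025complexity}: bound $\mbP(A_k)$ for each $k$ by a summable sequence and then invoke Lemma~\ref{lem:borel-cantelli}. Here $A_k$ is the event in \eqref{eq:g-est-diff}, or in \eqref{eq:f-est-diff}. Throughout, write $D_k:=\max\{\Deltapre,\sqrt{\varepsilon_k/\Lambda_k}\}$. Both $\Deltapre$ and $\Lambda_k$ are $\mcF_k$-measurable, so $D_k$ is fixed before any sample of iteration $k$ is drawn. This is exactly why the statement is phrased in terms of $D_k$ rather than the random $\Deltatilde_k(N_k)$.

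\textbf{Step 1: Lower bound on $N_k$.} Since $\Deltatilde_k(n)\le D_k$ by \eqref{eq:delta-tilde}, and $\sigma_{\text{mx}}\ge\sigma_0$, the stopping rule \eqref{eq:as} forces
\[
\frac{\sigma_0}{\sqrt{N_k}}\le \frac{\kappa_a}{\sqrt{\lambda_k}}D_k^3,
\qquad\text{i.e.}\qquad
N_k\ge n_k:=\frac{\sigma_0^2\lambda_k}{\kappa_a^2 D_k^{6}}.
\]

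\textbf{Step 2: Per-$k$ tail bound.} Condition on $\mcF_k$ and decompose
\[
\mbP(A_k\mid\mcF_k)\le\sum_{n\ge n_k}\mbP\bigl(\|\BFEbar_k^{g,0}(n)\|>\kappa_{eg}D_k^\alpha,\ N_k=n\mid\mcF_k\bigr)
\le\sum_{n\ge n_k}\mbP\bigl(\|\BFEbar_k^{g,0}(n)\|>\kappa_{eg}D_k^\alpha\mid\mcF_k\bigr).
\]
Dropping the event $\{N_k=n\}$ avoids having to handle the stopping-time dependence directly. For each fixed $n$, we split the norm into its $d$ coordinates, using $\|\cdot\|>c\Rightarrow$ some $|[\cdot]_r|>c/\sqrt d$. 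We then apply Lemma~\ref{lem:dependent-bernstein} to the martingale differences $\pm[\BFE_{k,j}^g]_r$; Assumption~\ref{assum:martingale} supplies the required moment condition. This gives
\[
\mbP\bigl(\|\BFEbar_k^{g,0}(n)\|>c\mid\mcF_k\bigr)\le 2d\exp\left\{-\frac{n c^2/d}{2(b_g c/\sqrt d+\sigma_g^2)}\right\},\qquad c=\kappa_{eg}D_k^\alpha .
\]
Because $\alpha\le3$ and $D_k\le\max\{\Delta^{\pre}_{\max},\sqrt{\varepsilon_k/\Lambda_{\min}}\}$ is bounded, we have $D_k^{2\alpha}\ge C D_k^6$ for a constant $C$. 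The exponent is therefore at least $c_1 n D_k^6$ for a constant $c_1>0$. Summing the geometric series from $n_k$ yields
\[
\mbP(A_k\mid\mcF_k)\le\frac{2d\,e^{-c_1 n_k D_k^6}}{1-e^{-c_1 D_k^6}}
= \frac{2d\,e^{-c_2\lambda_k}}{1-e^{-c_1 D_k^6}} .
\]

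\textbf{Step 3: Summability, the main obstacle.} The denominator blows up like $(c_1D_k^6)^{-1}$ as $D_k\to0$. We will control it with the floor $D_k\ge\sqrt{\varepsilon_k/\Lambda_k}$, with $\varepsilon_k=\Theta(k^{-2/3})$. What we need is a polynomial bound $\Lambda_k\le k^{p}$, or more generally $D_k^{-6}=\mcO(k^{q})$.

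Such a bound holds deterministically. $\Lambda_k$ grows by at most a factor $\gamma_1$ per iteration, so $\Lambda_k\le\gamma_1^k\Lambda_0$, which is only exponential. That by itself is not enough, but this is precisely where $\lambda_k=\mcO((\log k)^{1+\epsilon_\lambda})$ must be chosen so that $\lambda_k$ beats $\log(D_k^{-6})$.

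To handle this, the first approach is to bound the tail more cleverly. Instead of the geometric sum, we use $N_k\ge n_k$ together with the fact that $N_k$ is almost surely finite, and sum $n e^{-c_1nD_k^6}$ terms only up to a polynomial range. Alternatively, we use the cruder bound
\[
\sum_{n\ge n_k}e^{-c_1 nD_k^6}\le e^{-c_1n_kD_k^6}\bigl(1+(c_1D_k^6)^{-1}\bigr)
\]
and replace $\lambda_k$'s growth assumption by a multiplicative $\lambda_k$ inside $n_k$, so that $e^{-c_2\lambda_k}/D_k^6\le e^{-c_2\lambda_k}k^{q}$ is summable once $\lambda_k\ge(q+2)\log k/c_2$. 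Since $\lambda_k\sim(\log k)^{1+\epsilon_\lambda}$ eventually dominates any $C\log k$, this gives summability whenever $\log D_k^{-1}=o(\lambda_k)$. With the crude $\Lambda_k\le\gamma_1^k\Lambda_0$, the quantity $\log D_k^{-1}$ is only $\mcO(k)$, so we would additionally need the polynomial bound on $\Lambda_k$. I expect this to come from the structure of the algorithm, or to have to be argued on the event where $\Lambda_k$ stays $\mcO(\lambda_k)$-controlled (cf.\ Lemma~\ref{lem:reg-ub}).

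\textbf{Step 4: Conclusion.} Taking expectations, $\sum_k\mbP(A_k)<\infty$, and Lemma~\ref{lem:borel-cantelli} gives \eqref{eq:g-est-diff}. The function error \eqref{eq:f-est-diff} follows identically, using the scalar Bernstein bound with $(b_f,\sigma_f^2)$ and without the factor $d$.
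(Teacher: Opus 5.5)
\textbf{There is a genuine gap at your Step 3, which you leave open.} Your Steps 1, 2 and 4 follow the paper's proof. Its Case 1/Case 2 split is just $D_k=\Deltapre$ versus $D_k=\sqrt{\varepsilon_k/\Lambda_k}$. It derives the same bound $N_k\ge\sigma_0^2\lambda_k/(\kappa_a^2D_k^6)$, takes a union bound over coordinates and over all $n\ge n_k$, applies Lemma~\ref{lem:dependent-bernstein}, sums the geometric series, and concludes with Lemma~\ref{lem:borel-cantelli}.

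Neither of your suggested repairs for Step 3 works. Algorithm~\ref{alg:ASUTRO} gives nothing better than $\Lambda_k\le\gamma_1^k\Lambda_0$, so $\log D_k^{-1}$ may be of order $k$, which no polylogarithmic $\lambda_k$ absorbs. Invoking Lemma~\ref{lem:reg-ub} is circular: that lemma is proved from Theorem~\ref{thm:bounded-error} and Corollary~\ref{cor:hessian-error}, which rest on Lemma~\ref{lem:bounded-error} and hence on the statement you are proving. You also made matters worse in Step 2. Replacing $D_k^{2\alpha}$ by $CD_k^6$ in the exponent discards the factor $D_k^{2\alpha-6}$, and that factor is exactly what controls the denominator when $\alpha<3$. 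The step you flagged is real: the paper's written proof never addresses the factor $1/(1-e^{-c_1(\Deltapre)^{2\alpha}})$. It only lower-bounds the numerator's exponent by a constant times $\lambda_k$ and asserts summability.

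\textbf{How to close it.} For $\alpha\in[0,3)$, keep the exponent $c_1nD_k^{2\alpha}$. The numerator is then $\exp(-a\lambda_k D_k^{2\alpha-6})$ with $a=c_1\sigma_0^2/\kappa_a^2$. Using $1-e^{-u}\ge u/(1+u)$, the whole expression is at most $C\,t^{2\alpha}e^{-a\lambda_k t^{6-2\alpha}}$ with $t=D_k^{-1}\ge D_{\max}^{-1}$, where $D_{\max}$ is your uniform bound on $D_k$. Once $\lambda_k>2\alpha D_{\max}^{6-2\alpha}/(a(6-2\alpha))$, this is decreasing in $t$. It is therefore at most $C D_{\max}^{-2\alpha}e^{-a\lambda_k D_{\max}^{2\alpha-6}}$ uniformly in $D_k$, which is summable with no control on $\Lambda_k$.

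For $\alpha=3$, which is used later (e.g.\ the $\Delta_k^{\alpha-1}$ term in Lemma~\ref{lem:monotone_decreasing}), the numerator is $e^{-a\lambda_k}$ whatever $D_k$ is. The prefactor of order $D_k^{-6}$ is then intrinsic to the union bound over $n$, so you must replace that union bound by a maximal inequality:
\begin{itemize}
\item Given $\mcF_k$, set $Z_n=\exp(\theta S_n-n\psi(\theta))$ with $\psi(\theta)=\theta^2\sigma_g^2/(2(1-b_g\theta))$ and $0<\theta<1/b_g$. By Assumption~\ref{assum:martingale}, $Z_n$ is a nonnegative supermartingale with $Z_0=1$.
\item On the event $\{\exists n\ge n_k: S_n\ge nx\}$ one has $\sup_n Z_n\ge e^{n_k(\theta x-\psi(\theta))}$.
\item Ville's inequality with $\theta=x/(\sigma_g^2+b_gx)$ gives the bound $\exp(-n_kx^2/(2(\sigma_g^2+b_gx)))$, with no denominator.
\end{itemize}
This yields $\mbP(A_k\mid\mcF_k)\le 2d\,e^{-c\lambda_k}$ for every $\alpha\in[0,3]$. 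Step 4 then goes through, provided $\lambda_k$ eventually exceeds every constant multiple of $\log k$. That lower growth condition is what summability actually requires, even though the paper writes $\lambda_k=\mcO((\log k)^{1+\epsilon_\lambda})$.
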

\begin{proof}
We provide the proof for~\eqref{eq:g-est-diff}, as~\eqref{eq:f-est-diff} follows from analogous arguments. 
Fix $\kappa_{eg} > 0, \alpha \in [0,3]$.

\textbf{Case 1: $\Deltapre\ge \sqrt{\frac{\varepsilon_k}{\Lambda_k}}$.} 
From the sample size rule \eqref{eq:as}, we have $N_k \ge \lambda_k \nu_k$, where $\nu_k := \left(\frac{\sigma_0}{\kappa_a(\Deltapre)^3}\right)^2$. 
Using the union bound over $d$ dimensions and the property that $N_k$ is a stopping time satisfying $N_k \ge  \lambda_k \nu_k $, we bound the tail probability:
\begin{align}
    \mbP\left(\|\BFEbar^{g,0}_k\| > \kappa_{eg} (\Deltapre)^{\alpha} \mid \mcF_{k}\right)
    &\le \sum_{r=1}^d \mbP\left(\sup_{n \ge \lambda_k \nu_k} \left| \frac{1}{n} \sum_{j=1}^{n} [\BFE^{g,0}_{k,j}]_r \right| > \frac{\kappa_{eg}}{d}(\Deltapre)^{\alpha} \mid \mcF_{k}\right) \nonumber \\
    &\le \sum_{r=1}^d \sum_{n = \lambda_k \nu_k}^{\infty} \mbP\left( \left| \sum_{j=1}^{n} [\BFE^{g,0}_{k,j}]_r \right| > n \frac{\kappa_{eg}}{d}(\Deltapre)^{\alpha} \mid \mcF_{k}\right). \label{eq:union-over-n-case1}
\end{align}
Applying Lemma~\ref{lem:dependent-bernstein} to the martingale $S_n = \sum_{j=1}^{n} [\BFE^{g,0}_{k,j}]_r$, we have
\begin{equation} \label{eq:bernstein-detail}
    \mbP\left( |S_n| > n \frac{\kappa_{eg}}{d}(\Deltapre)^{\alpha} \mid \mcF_k \right) \le 2 \exp \left( - n\frac{ \frac{\kappa_{eg}^2}{d^2}(\Deltapre)^{2\alpha}}{ 2 \left( \sigma_g^2 + b_g \frac{\kappa_{eg}}{d}(\Deltapre)^{\alpha} \right) } \right).
\end{equation}
To simplify the exponent, we utilize the bound $\Deltapre \le \Delta_{\max}^\pre$. Define constant $c_1$ independent of $k$ as
\begin{equation} \label{eq:constant-c-def}
    c_1 := \frac{\kappa_{eg}^2}{ 2d^2 \sigma_g^2 + 2b_g d \kappa_{eg} (\Delta_{\max}^\pre)^{\alpha} } \le \frac{\kappa_{eg}^2}{ 2d^2 \sigma_g^2 + 2b_g d \kappa_{eg} (\Deltapre)^{\alpha} }.
\end{equation}
Substituting \eqref{eq:constant-c-def} into \eqref{eq:bernstein-detail} and summing the geometric series in \eqref{eq:union-over-n-case1} starting from $n = \lambda_k \nu_k$, we obtain
\begin{align}
    \mbP\left(\|\BFEbar^{g,0}_k\| > \kappa_{eg} (\Deltapre)^{\alpha} \mid \mcF_{k}\right) 
    &\le 2d \sum_{n = \lambda_k \nu_k }^{\infty} \exp\left( -n c_1 (\Deltapre)^{2\alpha} \right) \nonumber \\
    &\le 2d \frac{\exp(-\lambda_k \nu_k c_1 (\Deltapre)^{2\alpha})}{1 - \exp(-c_1 (\Deltapre)^{2\alpha})}. \label{eq:geometric-sum-case1}
\end{align}
Substituting the definition $\nu_k = \sigma_0^2 / (\kappa_a^2 (\Deltapre)^6)$ into the exponent of the numerator yields
\begin{equation} \label{eq:exponent-analysis}
    \lambda_k \nu_k c_1 (\Deltapre)^{2\alpha} 
    = \lambda_k \left( \frac{c_1 \sigma_0^2}{\kappa_a^2} \right) (\Deltapre)^{2\alpha-6} 
    \ge \left(\frac{c_1 \sigma_0^2 (\Delta_{\max}^\pre)^{2\alpha-6}}{\kappa_a^2} \right) \lambda_k,
\end{equation}
where the last inequality comes from $(\Deltapre)^{2\alpha-6} \ge (\Delta_{\max}^\pre)^{2\alpha-6}$ for any $\alpha \in [0,3]$.
Finally, since $\lambda_k = \mcO((\log k)^{1+\epsilon_{\lambda}})$, the probability in \eqref{eq:geometric-sum-case1} is summable over $k$.

\textbf{Case 2: $\Deltapre<\sqrt{\frac{\varepsilon_k}{\Lambda_k}}$.} The adaptive sampling rule~\eqref{eq:as} implies $N_k \ge \lambda_k \bar{\nu}_k$, where $\bar{\nu}_k := \sigma_0^2 \Lambda_k^3 / (\kappa_a^2 \varepsilon_k^3)$. Following the same geometric series argument as in Case 1 with the threshold $\kappa_{eg} (\varepsilon_k/\Lambda_k)^{\alpha/2}$, we obtain
\begin{equation} \label{eq:geometric-sum-case2}
    \mbP\left(\|\BFEbar^{g,0}_k\| > \kappa_{eg} \left(\frac{\varepsilon_k}{\Lambda_k}\right)^{\alpha/2} \mid \mcF_{k}\right) \le 2d \frac{\exp(-\lambda_k \bar{\nu}_k c_2 (\varepsilon_k/\Lambda_k)^{\alpha})}{1 - \exp(-c_2 (\varepsilon_k/\Lambda_k)^{\alpha})}.
\end{equation}
To ensure the constant $c_2$ is independent of $k$, we utilize the upper bound $\varepsilon_k/\Lambda_k \le \varepsilon_0/\Lambda_{\min}$. We define
\begin{equation} \label{eq:constant-c2-def}
    c_2 := \frac{\kappa_{eg}^2}{ 2d^2 \sigma_g^2 + 2b_g d \kappa_{eg} (\varepsilon_0/\Lambda_{\min})^{\alpha/2} } \le \frac{\kappa_{eg}^2}{ 2d^2 \sigma_g^2 + 2b_g d \kappa_{eg} (\varepsilon_k/\Lambda_k)^{\alpha/2} }.
\end{equation}
Substituting the definition of $\bar{\nu}_k$ into the exponent of the numerator in~\eqref{eq:geometric-sum-case2} yields
\begin{equation} \label{eq:exponent-analysis-case2}
    \lambda_k \bar{\nu}_k c_2 \left(\frac{\varepsilon_k}{\Lambda_k}\right)^{\alpha} = \lambda_k \left( \frac{c_2 \sigma_0^2}{\kappa_a^2} \right) \left(\frac{\varepsilon_k}{\Lambda_k}\right)^{\alpha-3} \ge \lambda_k \left( \frac{c_2 \sigma_0^2}{\kappa_a^2} \right) \left(\frac{\varepsilon_0}{\Lambda_{\min}}\right)^{\alpha-3},
\end{equation}
where the last inequality comes from $\alpha-3 \le 0$ and $\varepsilon_k/\Lambda_k \le \varepsilon_0/\Lambda_{\min}$. Finally, since $\lambda_k = \mcO((\log k)^{1+\epsilon_{\lambda}})$, the right-hand side of \eqref{eq:geometric-sum-case2} is summable over $k$. Applying Lemma~\ref{lem:borel-cantelli} in both cases concludes the proof. 
\end{proof}

\begin{remark}
\label{rmk:alpha_error}
The power $\alpha$ in \eqref{eq:est-error-all} represents the level of precision to which the estimation error is controlled relative to $\Deltapre$. This precision is a direct consequence of the specific power of $\Deltatilde_k$ used in the sampling stopping criterion. For instance, the cubic power of $\Deltatilde_k$ in~\eqref{eq:as} ensures that the conditions in \eqref{eq:est-error-all} hold for any $\alpha \in [0, 3]$. Accordingly, if the sampling rule were instead based on a quadratic power $\Deltatilde_k^2$, the resulting error would be governed by a corresponding range, namely $\alpha \in [0, 2]$. This link allows for the adjustment of the sampling rule's stringency based on the desired order of the estimation error.
\end{remark}

\begin{lemma} \label{lem:bounded-error}
Suppose Assumptions~\ref{assum:martingale} and \ref{assum:grad-dominates} hold and $\{\BFX_k\}$ is the sequence of iterates generated by Algorithm~\ref{alg:ASUTRO}.
Then, for any constants $\kappa_{eg} > 0$ and $\alpha \in [0,3]$, 
\begin{subequations}
\begin{align} 
    \mbP \left( \|\BFEbar_k^{g,0}\| > \kappa_{eg} \left(\frac{\|\nabla f_k^{0}\|}{\Lambda_k}\right)^{\alpha/2} \text{ i.o.} \right) &= 0, \label{eq:grad-error-fk} \\
    \mbP\left(|\Ebar_k^{0}| > \kappa_{ef} \left(\frac{\|\nabla f_k^{0}\|}{\Lambda_k}\right)^{\alpha/2} \text{ i.o.} \right) &= 0. \label{eq:func-error-fk}
\end{align}
\end{subequations}
\end{lemma}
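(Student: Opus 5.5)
The plan is to reduce the statement to Lemma~\ref{lem:gradient-asfinite} by showing that, almost surely for all large $k$, the quantity $\max\{\Deltapre,\sqrt{\varepsilon_k/\Lambda_k}\}$ is bounded above by a constant multiple of $\sqrt{\|\nabla f_k^0\|/\Lambda_k}$. Once we have, for some deterministic $C\ge 1$ and all $k\ge K'(\omega)$,
\[
\max\left\{\Deltapre,\sqrt{\tfrac{\varepsilon_k}{\Lambda_k}}\right\}\le C\sqrt{\tfrac{\|\nabla f_k^0\|}{\Lambda_k}},
\]
we apply Lemma~\ref{lem:gradient-asfinite} with the constant $\kappa_{eg}C^{-\alpha}$ in place of $\kappa_{eg}$ (and similarly $\kappa_{ef}C^{-\alpha}$). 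The union of the two null events then yields \eqref{eq:grad-error-fk} and \eqref{eq:func-error-fk}.

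\textbf{The $\varepsilon_k$ term.} Assumption~\ref{assum:grad-dominates} gives $\varepsilon_k<\|\nabla f_k^0\|$ for $k\ge K(\omega)$ off a null set. Hence $\sqrt{\varepsilon_k/\Lambda_k}\le\sqrt{\|\nabla f_k^0\|/\Lambda_k}$ directly, and $C=1$ suffices for this term.

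\textbf{The preliminary radius term.} By Step~\ref{algstep:update-delta}, $\Deltapre\le\sqrt{\|\BFGbar_k^\pre\|/(16\Lambda_k)}$. Here $\BFGbar_k^\pre$ is the estimate at $\BFX_k$ carried over from iteration $k-1$: it equals $\BFGbar_{k-1}^\s$ after a success and $\BFGbar_{k-1}$ after a failure. We need $\|\BFGbar_k^\pre\|\le c\|\nabla f_k^0\|$ eventually. We split on the previous outcome.
\begin{itemize}
\item If iteration $k-1$ was unsuccessful, then $\BFX_k=\BFX_{k-1}$. Applying Lemma~\ref{lem:gradient-asfinite} at iteration $k-1$ with $\alpha=2$ gives $\|\BFGbar_k^\pre-\nabla f_k^0\|\le\kappa\max\{\Delta_{k-1}^\pre,\sqrt{\varepsilon_{k-1}/\Lambda_{k-1}}\}^2$. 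Since $\Lambda_k=\gamma_1\Lambda_{k-1}$ after a failure, this quantity is comparable to $\|\BFGbar_{k-1}^\pre\|/\Lambda_{k-1}+\varepsilon_{k-1}/\Lambda_{k-1}$.
\item If iteration $k-1$ was successful, $\BFGbar_{k-1}^\s$ is computed at the trial point with sample size \eqref{eq:as-candidate}, which is governed by $\Delta_{k-1}^3$. A Bernstein/Borel--Cantelli argument identical to the proof of Lemma~\ref{lem:gradient-asfinite} shows $\|\BFGbar_{k-1}^\s-\nabla f(\BFX_k)\|\le\kappa\Delta_{k-1}^{2}$ eventually. Here $\Delta_{k-1}^2=\|\BFGbar_{k-1}\|/(16\Lambda_{k-1})$.
\end{itemize}
In both cases the error is bounded by a small constant times gradient-estimate norms divided by $\Lambda\ge\Lambda_{\min}$. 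Taking $\kappa$ small, for instance $\kappa\le\Lambda_{\min}/2$ after absorbing constants, lets us absorb these terms. The bootstrapping is done by induction or a $\limsup$ argument along each sample path, using $\varepsilon_{k-1}\le 2^{2/3}\varepsilon_k<2^{2/3}\|\nabla f_k^0\|$. This yields $\|\BFGbar_k^\pre\|\le 2\|\nabla f_k^0\|+c'\varepsilon_k\le c\|\nabla f_k^0\|$, and therefore $\Deltapre\le C\sqrt{\|\nabla f_k^0\|/\Lambda_k}$.

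\textbf{Main obstacle.} The hard part is this last step: controlling the estimate $\BFGbar_k^\pre$, which was drawn at the previous iteration, relative to the \emph{true} current gradient. The difficulty is that the error bounds are stated in terms of the preliminary radius or $\Delta_{k-1}$, which themselves depend on estimated norms, so there is a circularity. I would first try to break it with a self-bounding inequality of the form $\|\BFGbar\|\le\|\nabla f\|+\kappa\|\BFGbar\|/\Lambda_{\min}$, which rearranges to $\|\BFGbar\|\le(1-\kappa/\Lambda_{\min})^{-1}\|\nabla f\|$. This is permitted because Lemma~\ref{lem:gradient-asfinite} holds for \emph{every} $\kappa_{eg}>0$. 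The trial-point estimate requires repeating the Lemma~\ref{lem:gradient-asfinite} argument with $\Delta_{k-1}$ in place of $\Deltapre$, which I expect to go through verbatim.
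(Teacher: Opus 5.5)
There is a genuine gap in your bound on the preliminary radius, in the case where iteration $k-1$ was successful. Your treatment of the $\sqrt{\varepsilon_k/\Lambda_k}$ term is fine and matches the paper.

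In the success case $\BFGbar_k^{\pre}=\BFGbar_{k-1}^{\s}$. The best available bound is $\|\BFGbar_{k-1}^{\s}-\nabla f_k^0\|\le\kappa\Delta_{k-1}^2=\kappa\|\BFGbar_{k-1}\|/(16\Lambda_{k-1})$. Here $\BFGbar_{k-1}$ estimates the gradient at the \emph{previous} iterate $\BFX_{k-1}\neq\BFX_k$. Your self-bounding inequality $\|\BFGbar\|\le\|\nabla f\|+\kappa\|\BFGbar\|/\Lambda_{\min}$ silently treats these two estimates as the same. To absorb the error term you would actually need $\|\nabla f_{k-1}^0\|\le c\,\|\nabla f_k^0\|$ for a deterministic $c$, and nothing provides this. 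A successful step of length up to $\Delta_{k-1}=\sqrt{\|\BFGbar_{k-1}\|/(16\Lambda_{k-1})}$ can shrink the true gradient by an arbitrary factor; Assumption~\ref{assum:grad-dominates} only keeps it above $\varepsilon_k\to0$. Meanwhile the noise in $\BFGbar_{k-1}^{\s}$ is calibrated to $\Delta_{k-1}$, not to $\|\nabla f_k^0\|$.

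Your failure-case recursion $\|\BFGbar_k^{\pre}\|\le\|\nabla f_k^0\|+a\|\BFGbar_{k-1}^{\pre}\|+b\,\varepsilon_{k-1}$ unwinds back to the last success. So the whole bootstrapping rests on this unproved step. The intermediate claim that $\Deltapre=\mcO(\sqrt{\|\nabla f_k^0\|/\Lambda_k})$ eventually is a property of the trajectory that the paper neither proves nor needs.

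The missing ingredient is the second argument of the $\max$ in $\Deltatilde_k(n)$ from \eqref{eq:delta-tilde}. Your proposal never uses it beyond $\Deltatilde_k\le\max\{\Deltapre,\sqrt{\varepsilon_k/\Lambda_k}\}$. The paper splits on whether $\|\nabla f_k^0\|\ge\Lambda_k(\Deltapre)^2$.

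\begin{itemize}
\item \textbf{First regime.} Lemma~\ref{lem:gradient-asfinite} applies directly; this is your reduction with $C=1$.
\item \textbf{Second regime} (exactly the one you try to rule out). The paper uses Lemma~\ref{lem:gradient-asfinite} with $\alpha=2$ and $\kappa_{eg}\le c_g^2\Lambda_{\min}$ only as a crude bound, $\|\BFGbar_k^0-\nabla f_k^0\|\le\kappa_{eg}(\Deltapre)^2$. This gives $\sqrt{\|\BFGbar_k^0\|/(16\Lambda_k)}-c_g\Deltapre\le\sqrt{\|\nabla f_k^0\|/\Lambda_k}$.
\end{itemize}

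In the second regime, at the stopping index,
\[
\Deltatilde_k(N_k)\le\max\left\{\sqrt{\varepsilon_k/\Lambda_k},\sqrt{\|\nabla f_k^0\|/\Lambda_k}\right\}=\sqrt{\|\nabla f_k^0\|/\Lambda_k}
\]
by Assumption~\ref{assum:grad-dominates}. Hence $N_k\ge\lambda_k\sigma_0^2\Lambda_k^3/(\kappa_a^2\|\nabla f_k^0\|^3)$ on the eventually-sure event where the crude bound holds. This lower bound is $\mcF_k$-measurable. So the Bernstein/geometric-series argument of Lemma~\ref{lem:gradient-asfinite}, run with threshold $(\|\nabla f_k^0\|/\Lambda_k)^{\alpha/2}$, gives a summable bound.

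The offset $-c_g\Deltapre$, together with the running estimate $\BFGbar(\BFX_k,n)$, makes the current sample size self-calibrate to the true current gradient even when $\Deltapre$ is stale. That is why you do not need $\Deltapre$ itself to be small.
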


\begin{proof}
We provide the proof for~\eqref{eq:grad-error-fk}, as~\eqref{eq:func-error-fk} follows from an analogous argument.
Let $\omega \in \Omega$ denote a realization of Algorithm~\ref{alg:ASUTRO} with a nonzero probability; for the remainder of this proof, we omit $\omega$ for simplicity. Fix any $\kappa_{eg}>0$ and $\alpha\in[0,3]$. The proof is partitioned into two cases depending on the relative scale of $\Deltapre$. In each case, we show that the tail probabilities of the gradient estimation error decay at a rate that ensures summability, thereby yielding the desired almost sure result.

\textbf{Case 1: $\Deltapre\ge \sqrt{\varepsilon_k/\Lambda_k}$.}
By Lemma~\ref{lem:gradient-asfinite}, we have $\|\BFGbar_k^0 - \nabla f_k^0\| \le \kappa_{eg} (\Deltapre)^2$ for all sufficiently large $k$. Choosing $\kappa_{eg}$ such that $\sqrt{\kappa_{eg}/\Lambda_{\min}} \le c_g$, where $c_g > 0$ is the constant given in~\eqref{eq:delta-tilde},  and utilizing the subadditivity of the square root, it follows that
\begin{align*}
    \sqrt{\frac{\|\BFGbar_k^0\|}{\Lambda_k}} - c_g \Deltapre
    &\le \sqrt{\frac{\|\nabla f_k^0\| + \|\BFGbar_k^0 - \nabla f_k^0\|}{\Lambda_k}} - c_g \Deltapre \\
    &\le \sqrt{\frac{\|\nabla f_k^0\|}{\Lambda_k}} + \left( \sqrt{\frac{\kappa_{eg}}{\Lambda_{\min}}} - c_g \right) \Deltapre \le \sqrt{\frac{\|\nabla f_k^0\|}{\Lambda_k}}.
\end{align*}
This implies that the adaptive sampling rule under Assumption~\ref{assum:grad-dominates} ensures a sample size satisfying $N_k^i \ge \lambda_k \bar{\nu}_k$, where $\bar{\nu}_k := \sigma_0^2 \Lambda_k^{3} / (\kappa_a^2 \|\nabla f_k^0\|^{3})$ for all sufficiently large $k$.
To analyze the tail probabilities, we partition the analysis into two regimes based on the relative magnitude of the gradient norm. 

In the regime where $\|\nabla f_k^0\| \ge \Lambda_k (\Deltapre)^2$, the concentration bound yields
\begin{equation*} \label{eq:ge-bound-regime1}
    \mbP\left(\|\BFEbar^{g,0}_k\| > \kappa_{eg} \left(\frac{\|\nabla f_k^0\|}{\Lambda_k}\right)^{\alpha/2} \ \middle\vert\ \mcF_{k}\right) \le 2d \frac{\exp(-\lambda_k \nu_k c_1 (\Deltapre)^{2\alpha})}{1 - \exp(-c_1 (\Deltapre)^{2\alpha})},
\end{equation*}
where $c_1$ and $\nu_k$ are the parameters defined as in the proof of Lemma~\ref{lem:gradient-asfinite}, and the summability of these probabilities follows from the same arguments therein.

Conversely, when $\|\nabla f_k^0\| < \Lambda_k (\Deltapre)^2$, we replace $(\Deltapre)^2$ with $\|\nabla f_k^0\|/\Lambda_k$ in the derivation of the concentration bound. This yields for sufficiently large $k$,
\begin{equation} \label{eq:geometric-sum-case2-cor}
    \mbP\left(\|\BFEbar^{g,0}_k\| > \kappa_{eg} \left(\frac{\|\nabla f_k^0\|}{\Lambda_k}\right)^{\alpha/2} \mid \mcF_{k}\right) \le 2d \frac{\exp(-\lambda_k \bar{\nu}_k c_2 (\|\nabla f_k^0\|/\Lambda_k)^{\alpha})}{1 - \exp(-c_2 (\|\nabla f_k^0\|/\Lambda_k)^{\alpha})},
\end{equation}
where 
\begin{equation*} 
    c_2 := \frac{\kappa_{eg}^2}{ 2d^2 \sigma_g^2 + 2b_g d \kappa_{eg} (\Deltapre)^{\alpha} } \le \frac{\kappa_{eg}^2}{ 2d^2 \sigma_g^2 + 2b_g d \kappa_{eg} (\|\nabla f_k^0\|/\Lambda_k)^{\alpha/2} }.
\end{equation*}
Substituting the definition of $\bar{\nu}_k$ into the exponent of the numerator in~\eqref{eq:geometric-sum-case2-cor} yields
\begin{equation*}
    \lambda_k \bar{\nu}_k c_2 \left(\frac{\|\nabla f_k^0\|}{\Lambda_k}\right)^{\alpha} 
    = \lambda_k \left( \frac{c_2 \sigma_0^2}{\kappa_a^2} \right) \left(\frac{\|\nabla f_k^0\|}{\Lambda_k}\right)^{\alpha-3} \ge \lambda_k \left( \frac{c_2 \sigma_0^2}{\kappa_a^2} \right) \left(\Deltapre\right)^{2\alpha-6}.
\end{equation*}
where the last inequality comes from $2\alpha-6 \le 0$ and $\|\nabla f_k^0\| < \Lambda_k (\Deltapre)^2$. Finally, since $\lambda_k = \mcO((\log k)^{1+\epsilon_{\lambda}})$ and $\Deltapre < \Delta_{\max}^\pre$, the right-hand side of~\eqref{eq:geometric-sum-case2-cor} is summable over $k$.

\textbf{Case 2: $\Deltapre < \sqrt{\frac{\varepsilon_k}{\Lambda_k}}$.}
By Assumption~\ref{assum:grad-dominates}, for sufficiently large $k$, we have $\|\nabla f_k^0\| \ge \varepsilon_k$, implying that 
\[
\mbP \left( \|\BFEbar_k^{g,0}\| > \kappa_{eg} \bigl(\Lambda_k^{-1}\|\nabla f_k^{0}\|\bigr)^{\alpha/2}\right) \le 
\mbP \left( \|\BFEbar_k^{g,0}\| > \kappa_{eg} \bigl(\Lambda_k^{-1}\varepsilon_k\bigr)^{\alpha/2}\right).
\]
The summability of the latter term is directly guaranteed by Lemma~\ref{lem:gradient-asfinite}. Consequently, applying Lemma~\ref{lem:borel-cantelli} in all cases concludes the proof for~\eqref{eq:grad-error-fk}. 
\end{proof}


Lemma~\ref{lem:bounded-error} establishes almost sure bounds on the stochastic gradient and function estimation errors at the center point ($i=0$). The following theorem extends this result to $\Delta_k$-based bounds and to both the center and trial points, i.e., $i \in \{0,s\}$.

\begin{theorem} \label{thm:bounded-error}
Suppose Assumptions~\ref{assum:martingale} and \ref{assum:grad-dominates} hold. For iterates $\{\BFX_k\}$ generated by Algorithm~\ref{alg:ASUTRO}, and any constants $\kappa_{eg}, \kappa_{ef} > 0$ and $\alpha \in [0,3]$, the following hold for any $i \in \{0,\text{s}\}$:
\begin{subequations} \label{eq:bounded-error-all-Delta}
\begin{align}
    \mbP \left( \|\BFEbar_k^{g,i}\| > \tilde\kappa_{eg} \Delta_k^{\alpha} \text{ i.o.} \right) &= 0, \label{eq:bounded-error-grad-delta} \\
    \mbP \left( |\Ebar_k^{i}| > \tilde\kappa_{ef} \Delta_k^{\alpha} \text{ i.o.} \right) &= 0, \label{eq:bounded-error-func-delta}
\end{align}
\end{subequations} where $\tilde\kappa_{eg}\le \kappa_{eg}(16/(1-\kappa_{eg}\Lambda_{\min}))^{\alpha/2}$ and $\tilde\kappa_{ef}\le \kappa_{ef}(16/(1-\kappa_{eg}\Lambda_{\min}))^{\alpha/2}$.
\end{theorem}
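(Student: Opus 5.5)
The plan is to reduce the $\Delta_k$-based bounds to the $\|\nabla f_k^0\|/\Lambda_k$-based bounds of Lemma~\ref{lem:bounded-error}. This is done by comparing $\Delta_k^2=\|\BFGbar_k\|/(16\Lambda_k)$ with $\|\nabla f_k^0\|/\Lambda_k$ on the almost sure event where the center-point gradient error is eventually small. Afterwards the same Bernstein/Borel–Cantelli argument is repeated at the trial point, using the sampling rule~\eqref{eq:as-candidate}, which is stated directly in terms of $\Delta_k^3$.

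\textbf{Step 1 (center point, $i=0$).} Apply Lemma~\ref{lem:bounded-error} with $\alpha=2$ and a small constant $\kappa_{eg}$. This gives, almost surely for all large $k$,
\[
\|\BFGbar_k-\nabla f_k^0\|\le \kappa_{eg}\|\nabla f_k^0\|/\Lambda_k\le \kappa_{eg}\Lambda_{\min}^{-1}\|\nabla f_k^0\| .
\]
Here I would normalize so that the resulting factor reads $\kappa_{eg}\Lambda_{\min}$, matching the constant in the statement. By the reverse triangle inequality,
\[
\|\BFGbar_k\|\ge (1-\kappa_{eg}\Lambda_{\min})\|\nabla f_k^0\|,
\]
so that
\[
\frac{\|\nabla f_k^0\|}{\Lambda_k}\le \frac{16}{1-\kappa_{eg}\Lambda_{\min}}\,\Delta_k^2 .
\]
Intersecting this probability-one event with the event from Lemma~\ref{lem:bounded-error} for the desired $\alpha\in[0,3]$ yields, for all large $k$,
\[
\|\BFEbar_k^{g,0}\|\le \kappa_{eg}\Big(\tfrac{16}{1-\kappa_{eg}\Lambda_{\min}}\Big)^{\alpha/2}\Delta_k^\alpha .
\]
This is \eqref{eq:bounded-error-grad-delta} with the stated $\tilde\kappa_{eg}$. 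The function bound \eqref{eq:bounded-error-func-delta} follows identically from \eqref{eq:func-error-fk}. Since the union of two null sets is null, "i.o." has probability zero.

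\textbf{Step 2 (trial point, $i=\mathrm{s}$).} Here $\Delta_k$ is $\mcF_k'$-measurable, where $\mcF_k'$ is the $\sigma$-algebra after the center-point sampling and subproblem solve, and $N_k^\s$ is a stopping time with respect to the trial-point sample filtration. The rule~\eqref{eq:as-candidate} forces
\[
N_k^\s\ge \lambda_k\sigma_0^2/(\kappa_a^2\Delta_k^6).
\]
I would then replicate the chain \eqref{eq:union-over-n-case1}–\eqref{eq:geometric-sum-case1} conditionally on $\mcF_k'$ with threshold $\tilde\kappa_{eg}\Delta_k^\alpha$. The exponent becomes
\[
\lambda_k (c\sigma_0^2/\kappa_a^2)\Delta_k^{2\alpha-6},
\]
which is bounded below by a multiple of $\lambda_k$ as long as $\Delta_k$ is bounded above.

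\textbf{Main obstacle.} The difficulty is that Algorithm~\ref{alg:ASUTRO} does not cap $\Delta_k$ by $\Delta^\pre_{\max}$. I would handle it on the almost sure event of Step~1: for large $k$,
\[
\Delta_k^2\le (1+\kappa_{eg}\Lambda_{\min})\,\|\nabla f_k^0\|/(16\Lambda_{\min}).
\]
In addition, $\|\nabla f_k^0\|$ is bounded, either via $\kappa_{Lg}$-smoothness along iterates with bounded function values, or, more simply, by splitting on the $\mcF_k'$-measurable event $\{\Delta_k\le D\}$ for a fixed constant $D$. On $\{\Delta_k>D\}$ the constant $c$ in the Bernstein exponent degrades only through the $b_g\Delta_k^\alpha$ term, while $\Delta_k^{2\alpha-6}$ times the Bernstein ratio behaves like $\Delta_k^{\alpha-6}$, which may be small. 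So I would argue $\{\Delta_k>D \text{ i.o.}\}$ is null using Step~1 together with boundedness of $\nabla f$ on the level set, since eventual monotonicity of $f$ is not yet available. Once $\Delta_k\le D$ eventually, the conditional tail bound is at most $2d\,e^{-c'\lambda_k}/(1-e^{-c''})$. Summability follows because $\lambda_k=\mcO((\log k)^{1+\epsilon_\lambda})$ grows faster than $\log k$ by the extra exponent. Taking expectations of the conditional probabilities and applying Lemma~\ref{lem:borel-cantelli} gives \eqref{eq:bounded-error-all-Delta} for $i=\mathrm{s}$; the function-value version follows in the same way.
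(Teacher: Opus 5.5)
Your Step~1 is the paper's argument for $i=0$: reverse triangle inequality plus $\Delta_k^2=\|\BFGbar_k\|/(16\Lambda_k)$. You invoke Lemma~\ref{lem:bounded-error} once with $\alpha=2$ for the comparison and once at the target $\alpha$. This is cleaner than the paper, which tacitly uses $\alpha=2$ at that step. It also makes your ``normalization'' legitimate: choose the comparison constant $\kappa'\le\kappa_{eg}\Lambda_{\min}^2$, so that $1-\kappa'/\Lambda_{\min}\ge 1-\kappa_{eg}\Lambda_{\min}$.

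For $i=\mathrm{s}$ the paper only cites Case (A-0) of Theorem~3.4 in \cite{ha2025complexity}, a setting where the radius is capped. You are right that $\Delta_k$ is uncapped in Algorithm~\ref{alg:ASUTRO}, so that citation does not transfer verbatim. The gap is in your repair. You need $\mbP(\Delta_k>D\ \text{i.o.})=0$ for a deterministic $D$, or at least $\sup_k\Delta_k<\infty$ a.s., and neither route you offer delivers it.
\begin{itemize}
\item Splitting on $\{\Delta_k\le D\}$ handles only that event. On $\{\Delta_k>D\}$ no summable bound is available; for $\alpha=0$ the bound does not even vanish as $\Delta_k\to\infty$, since then $N_k^{\mathrm s}$ can be $\mcO(1)$.
\item The level-set route is circular. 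The inequality $\|\nabla f(\BFx)\|^2\le 2\kappa_{Lg}(f(\BFx)-f^*)$ bounds $\nabla f$ on every sublevel set, but nothing at this stage keeps $\BFX_k$ in one. Criteria~2 can accept steps that increase $f$. Criteria~1 certifies only a decrease of $\Fbar$, and turning that into a decrease of $f$ needs exactly the trial-point bound $|\Ebar_k^{\mathrm s}|\le\tilde\kappa_{ef}\Delta_k^3$ you are proving. This is how Lemma~\ref{lem:monotone_decreasing} uses the theorem.
\end{itemize}
An extra ingredient is required. One option is $f\le f^u$ as in Lemma~\ref{lem:grad-ub-io}; with Step~1 this gives a deterministic $D$ with $\Delta_k\le D$ eventually a.s. Another is capping the radius used in \eqref{eq:as-candidate}.

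There is also a secondary slip. Even on $\{\Delta_k\le D\}$, the termwise union bound gives
\[
2d\exp(-C\lambda_k\Delta_k^{2\alpha-6})\big/\bigl(1-\exp(-c\Delta_k^{2\alpha})\bigr).
\]
Its denominator is not bounded below as $\Delta_k\to 0$, so the bound is not of the form $2d\,e^{-c'\lambda_k}/(1-e^{-c''})$.
\begin{itemize}
\item For $\alpha<3$ the numerator absorbs this: maximizing over $\Delta_k\in(0,D]$ still gives $\lesssim e^{-c'\lambda_k}$.
\item For $\alpha=3$ the bound is of order $e^{-C\lambda_k}\Delta_k^{-6}$, and $\Delta_k$ can be arbitrarily small because $\|\BFGbar_k\|$ is random.
\end{itemize}
To fix this, replace the union over $n\ge N_k^{\mathrm s}$ by a Freedman-type maximal inequality on dyadic blocks $[2^jm,2^{j+1}m)$. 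This removes the spurious $\Delta_k^{-2\alpha}$ factor and gives a bound $\lesssim e^{-C\lambda_k}$ uniformly in $\Delta_k\le D$. Lemma~\ref{lem:gradient-asfinite} has the same looseness.
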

\begin{proof}
We provide the proof for~\eqref{eq:bounded-error-grad-delta}, as~\eqref{eq:bounded-error-func-delta} follows from an analogous argument. For the case $i=s$, the result is consistent with Case (A-0) of Theorem 3.4 in \cite{ha2025complexity}. For $i=0$, we first invoke Lemma \ref{lem:bounded-error}, which guarantees that for any arbitrary $\kappa_{eg} > 0$ and $\alpha \in [0,3]$, the estimation error satisfies $\|\BFEbar_k^{g,0}\| \le \kappa_{eg} (\|\nabla f_k^{0}\|/\Lambda_k)^{\alpha/2}$ almost surely for all sufficiently large $k$. 
Applying the reverse triangle inequality, we have for sufficiently large $k$,
\begin{equation} \label{eq:g-lowerbound-final}
\|\BFGbar_k^0\| \ge \|\nabla f_k^0\| - \|\BFEbar_k^{g,0}\| \ge \left(1 - \frac{\kappa_{eg}}{\Lambda_{\min}}\right) \|\nabla f_k^0\|.
\end{equation}
Using the trust-region update rule $\Delta_k^2 = \|\BFGbar_k^0\| / (16 \Lambda_k)$, we obtain
\begin{equation} \label{eq:grad-to-delta}
\frac{\|\nabla f_k^0\|}{\Lambda_k} \le \frac{\|\BFGbar_k^0\|}{\Lambda_k (1 - \kappa_{eg}/\Lambda_{\min})} = \frac{16 \Delta_k^2}{1 - \kappa_{eg}/\Lambda_{\min}}.
\end{equation}
Setting $c = 16 / (1 - \kappa_{eg}/\Lambda_{\min})$, it follows that
\begin{equation*}
\mbP \left( \|\BFEbar_k^{g,0}\| > \kappa_{eg} (c \Delta_k^2)^{\alpha/2} \right) \le \mbP \left( \|\BFEbar_k^{g,0}\| > \kappa_{eg} \bigl(\Lambda_k^{-1}\|\nabla f_k^{0}\|\bigr)^{\alpha/2} \right).
\end{equation*}
Thus, the summability of the right-hand side implies $\mbP ( \|\BFEbar_k^{g,0}\| > \tilde\kappa_{eg} \Delta_k^{\alpha} \text{ i.o.} ) = 0$. The result for $|\Ebar_k^0|$ follows by an analogous argument. In particular,
\begin{equation*}
    \mbP \left( |\Ebar_k^{0}| > \kappa_{ef} (c \Delta_k^2)^{\alpha/2} \right) \le \mbP \left( |\Ebar_k^{0}| > \kappa_{ef} \bigl(\Lambda_k^{-1}\|\nabla f_k^{0}\|\bigr)^{\alpha/2} \right),
\end{equation*}
which implies
\(
\mbP \left( |\Ebar_k^{0}| > \tilde\kappa_{ef} \Delta_k^{\alpha} \ \text{i.o.} \right) = 0.
\)
\end{proof}

We remark that the condition $\kappa_{eg} < \Lambda_{\min}^{-1}$ ensures that $1 - \kappa_{eg}\Lambda_{\min} > 0$ to ensure that $\tilde\kappa_{eg}>0$ in Theorem~\ref{thm:bounded-error}. We now quantify the accuracy of the Hessian approximation $\mathsf{H}_k$. 

\begin{lemma} \label{lem:fd-hessian}
Suppose Assumptions~\ref{assum:smooth-f}--\ref{assum:martingale} hold. 
Then, the forward-difference Hessian $\sfH_k$ constructed with step $\Delta_k$ satisfies
\begin{equation*}
\|\nabla^2 f(\BFX_k) - \sfH_k\| \le \sqrt{d} \left( \frac{\kappa_{L\sfH}}{2} \Delta_k + \frac{\max_{j} \|\BFEbar^{g,j}_{k} - \BFEbar^{g,0}_{k}\|}{\Delta_k} \right).
\end{equation*}
\end{lemma}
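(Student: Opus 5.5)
The proof works column by column, using the decomposition \eqref{eq:hess-col-decomp}, and then passes from a column-wise bound to the spectral norm via the Frobenius norm. For each $j\in\{1,\dots,d\}$ we write
\[
\bigl(\sfH_k-\nabla^2 f(\BFX_k)\bigr)\BFe_j
= \frac{\nabla f(\BFX_k+\Delta_k\BFe_j)-\nabla f(\BFX_k)-\Delta_k\nabla^2 f(\BFX_k)\BFe_j}{\Delta_k}
+ \frac{\BFEbar^{g,j}_{k}-\BFEbar^{g,0}_{k}}{\Delta_k}.
\]
Here $\BFEbar^{g}_k$ in \eqref{eq:hess-col-decomp} is identified with $\BFEbar^{g,0}_k$, the center-point error.

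\textbf{Step 1: deterministic Taylor term.} Apply Lemma~\ref{lem:smooth-bound} with $\BFx=\BFX_k$ and $\BFy=\BFX_k+\Delta_k\BFe_j$. Since $\|\BFy-\BFx\|=\Delta_k$ because $\|\BFe_j\|=1$, the numerator of the first fraction is at most $\frac{\kappa_{L\sfH}}{2}\Delta_k^2$. Dividing by $\Delta_k$ bounds the first fraction by $\frac{\kappa_{L\sfH}}{2}\Delta_k$. Assumption~\ref{assum:smooth-f} supplies the Lipschitz Hessian this requires.

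\textbf{Step 2: stochastic term.} Bound the second fraction trivially by $\max_j\|\BFEbar^{g,j}_{k}-\BFEbar^{g,0}_{k}\|/\Delta_k$. This step needs no probabilistic argument, because the statement keeps the error explicitly on its right-hand side. The probabilistic control is presumably deferred to later use through Theorem~\ref{thm:bounded-error} applied at the perturbed points.

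\textbf{Step 3: assembly.} By the triangle inequality, each column satisfies $\|(\sfH_k-\nabla^2 f(\BFX_k))\BFe_j\|\le \frac{\kappa_{L\sfH}}{2}\Delta_k + \max_j\|\BFEbar^{g,j}_{k}-\BFEbar^{g,0}_{k}\|/\Delta_k =: c_k$. The spectral norm is bounded by the Frobenius norm, which is the square root of the sum of squared column norms, so $\|\sfH_k-\nabla^2 f(\BFX_k)\|\le\sqrt{d\,c_k^2}=\sqrt d\,c_k$, which is the claim.

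The main (minor) obstacle is that $\sfH_k$ built from forward differences need not be symmetric. If the algorithm symmetrizes it as $\tfrac12(\sfH_k+\sfH_k^\intercal)$, the same bound still holds: $\nabla^2 f(\BFX_k)$ is symmetric, and symmetrization does not increase the spectral-norm distance to a symmetric matrix, since $\|\tfrac12(\mathsf A+\mathsf A^\intercal)\|\le\|\mathsf A\|$ for $\mathsf A=\sfH_k-\nabla^2 f(\BFX_k)$. Otherwise the bound holds directly for the unsymmetrized matrix.
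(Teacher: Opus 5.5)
Your proof is correct and follows essentially the same route as the paper: the column-wise decomposition via \eqref{eq:hess-col-decomp}, a $\frac{\kappa_{L\sfH}}{2}\Delta_k$ bound on the Taylor remainder, and the Frobenius-norm bound $\|\nabla^2 f(\BFX_k)-\sfH_k\|\le\sqrt{d}\max_j\|(\nabla^2 f(\BFX_k)-\sfH_k)\BFe_j\|$. The only difference is cosmetic: the paper derives the remainder bound directly from the integral form of the mean value theorem and Hessian Lipschitz continuity, whereas you cite Lemma~\ref{lem:smooth-bound}, which packages exactly that computation.
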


\begin{proof}
By the mean value theorem for vector fields, using \eqref{eq:hess-col-decomp} we obtain:
\begin{align*}
(\nabla^2 f(\BFX_k) - \sfH_k) \BFe_j &= \frac{1}{\Delta_k} \int_{0}^{\Delta_k} \left( \nabla^2 f(\BFX_k) - \nabla^2 f(\BFX_k + t \BFe_j) \right) \BFe_j \, dt - \frac{\BFEbar^{g,j}_{k} - \BFEbar^{g,0}_{k}}{\Delta_k}.
\end{align*}
Taking the Euclidean norm and applying the triangle inequality, the Lipschitz continuity of the Hessian (Assumption~\ref{assum:smooth-f}) implies:
\begin{align*}
\| (\nabla^2 f(\BFX_k) - \sfH_k) \BFe_j \| &\le \frac{1}{\Delta_k} \int_{0}^{\Delta_k} \kappa_{L\sfH} t \, dt + \frac{\|\BFEbar^{g,j}_{k} - \BFEbar^{g,0}_{k}\|}{\Delta_k} \\
&= \frac{\kappa_{L\sfH}}{2} \Delta_k + \frac{\|\BFEbar^{g,j}_{k} - \BFEbar^{g,0}_{k}\|}{\Delta_k}.
\end{align*}
Finally, using $\|\sfH_k\|_2 \le \|\sfH_k\|_F = (\sum_{j=1}^d \|\sfH_k \BFe_j\|_2^2)^{1/2}$, we have:
\begin{equation*}
\|\nabla^2 f(\BFX_k) - \sfH_k\| \le \sqrt{d} \max_j \| (\nabla^2 f(\BFX_k) - \sfH_k) \BFe_j \|,
\end{equation*}
which yields the desired bound upon substituting the column-wise result.
\end{proof}

\begin{remark} \label{rmk:autodiff}
In deterministic setting, automatic differentiation can be used to compute Hessian--vector products $\nabla^2 f(\BFx)\BFq$ at a cost that is only a small constant multiple of that for evaluating $f$; see Chapter 8.2 in~\cite{nocedal1999numerical}. Hence, a dense Hessian can be obtained by repeating this procedure for $\BFq=e_1,\dots,e_n$, yielding a cost proportional to $n$ times that of a single function evaluation. In stochastic simulation optimization, gradient information can in some cases be obtained via infinitesimal perturbation analysis (IPA), which corresponds to differentiating the sample performance with respect to the decision variables along a sample path; see~\cite{ford2022automatic}. From a second-order perspective, this framework can be extended to compute Hessian--vector products so that each such product can be obtained at a cost comparable to a small multiple of a single simulation run.
\end{remark}

\begin{corollary}\label{cor:hessian-error}
Suppose Assumptions~\ref{assum:smooth-f}--\ref{assum:grad-dominates} hold. Let $\{\BFX_k\}$ be the sequence of iterates generated by Algorithm~\ref{alg:ASUTRO}. Then, the forward-difference Hessian $\sfH_k$ satisfies
\begin{equation*}
    \mathbb{P}\left(\|\nabla^{2}f(\BFX_k) - \sfH_k\| > \kappa_{e\sfH} \Delta_k \text{ i.o.}\right) = 0,
\end{equation*}
where $\kappa_{e\sfH} := \sqrt{d} \left( \frac{\kappa_{L\sfH}}{2} + \tilde{\kappa}_{eg} \right)$ and $\tilde{\kappa}_{eg}$ is defined as in Theorem~\ref{thm:bounded-error}.
\end{corollary}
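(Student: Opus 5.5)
The plan is to combine the deterministic bound of Lemma~\ref{lem:fd-hessian} with the almost sure error control of Theorem~\ref{thm:bounded-error}, used with $\alpha=2$. By Lemma~\ref{lem:fd-hessian}, for every $k$,
\[
\|\nabla^2 f(\BFX_k)-\sfH_k\| \le \sqrt{d}\left(\frac{\kappa_{L\sfH}}{2}\Delta_k + \frac{\max_j\|\BFEbar^{g,j}_k-\BFEbar^{g,0}_k\|}{\Delta_k}\right).
\]
It therefore suffices to show that, almost surely for all sufficiently large $k$,
\[
\max_j\|\BFEbar^{g,j}_k-\BFEbar^{g,0}_k\|\le \tilde\kappa_{eg}\Delta_k^2.
\]
Dividing by $\Delta_k$ and adding the Lipschitz term then gives the bound with $\kappa_{e\sfH}=\sqrt{d}(\kappa_{L\sfH}/2+\tilde\kappa_{eg})$.

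To control the error difference, I apply Theorem~\ref{thm:bounded-error} with $\alpha=2$ and with $\kappa_{eg}$ replaced by $\kappa_{eg}/2$. This gives constants $\tilde\kappa_{eg}/2$ such that each of the following events occurs only finitely often with probability one:
\[
\|\BFEbar^{g,0}_k\|>\tfrac{\tilde\kappa_{eg}}{2}\Delta_k^2, \qquad \|\BFEbar^{g,j}_k\|>\tfrac{\tilde\kappa_{eg}}{2}\Delta_k^2, \quad j=1,\dots,d.
\]
The center point is the case $i=0$ of the theorem. The perturbed points $\BFX_k+\Delta_k\BFe_j$ are handled by the case $i=\mathrm{s}$, because their sample sizes $N_k^j$ follow rule \eqref{eq:as-candidate}, which is the same rule used at the trial point. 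The triangle inequality then gives $\|\BFEbar^{g,j}_k-\BFEbar^{g,0}_k\|\le\tilde\kappa_{eg}\Delta_k^2$ outside the union of these $d+1$ events. A finite union of null ``i.o.'' events is null, so the estimate holds almost surely for all large $k$.

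\textbf{Main obstacle.} The delicate step is justifying that the trial-point argument carries over to the perturbed points $\BFX_k+\Delta_k\BFe_j$. That argument is Case (A-0) of Theorem 3.4 in \cite{ha2025complexity}. There, $\Delta_k$ is $\mcF_k$-measurable up to the center-point sampling, and $N^j_k$ is a stopping time with $N^j_k\ge \lambda_k\sigma_0^2/(\kappa_a^2\Delta_k^6)$. I would redo the Bernstein-plus-geometric-series computation from Lemma~\ref{lem:gradient-asfinite} conditionally on the $\sigma$-algebra generated after the center-point sampling, with threshold $\Delta_k^\alpha$ in place of $(\Deltapre)^\alpha$. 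For $\alpha\le 3$ this makes the exponent at least a constant times $\lambda_k$ times a power of $\Delta_k$, so I need an upper bound on $\Delta_k$ that does not depend on $k$.

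That bound comes from eventual boundedness of $\|\BFGbar_k\|$ together with $\Lambda_k\ge\Lambda_{\min}$, which give $\Delta_k$ eventually bounded. If the required uniform bound on $\|\BFGbar_k\|$ is not yet available at this point in the paper, I would fall back on the pre-radius route of Lemma~\ref{lem:bounded-error} to bound $\Delta_k$ by a multiple of $\sqrt{\|\nabla f_k\|/\Lambda_k}$. Once the uniform bound is in hand, summability over $k$ follows from $\lambda_k=\mcO((\log k)^{1+\epsilon_\lambda})$, and Lemma~\ref{lem:borel-cantelli} concludes the argument.
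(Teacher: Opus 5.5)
Your main argument matches the paper's. You combine Lemma~\ref{lem:fd-hessian} with the $\alpha=2$ error bounds, treating each perturbed point $\BFX_k+\Delta_k\BFe_j$ like the trial point (the case $i=\mathrm{s}$ of Theorem~\ref{thm:bounded-error}) because $N_k^j$ follows \eqref{eq:as-candidate}. You are in fact slightly more careful than the paper. Its proof plugs only $\|\BFEbar^{g,j}_k\|\le\tilde\kappa_{eg}\Delta_k^2$ into a bound that involves $\|\BFEbar^{g,j}_k-\BFEbar^{g,0}_k\|$. Invoking the center-point case $i=0$ as well and halving $\kappa_{eg}$ handles both terms and gives exactly $\kappa_{e\sfH}$.

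What does not hold up is the self-contained justification in your ``main obstacle'' paragraph. You are right that redoing the Bernstein computation at the perturbed points needs a $k$-independent upper bound on $\Delta_k$. With $N_k^j\ge\lambda_k\sigma_0^2/(\kappa_a^2\Delta_k^6)$ and a threshold of order $\Delta_k^2$, the exponent is of order $\lambda_k/\bigl(\Delta_k^2(\sigma_g^2+b_g\Delta_k^2)\bigr)$. Algorithm~\ref{alg:ASUTRO} caps only $\Deltapre$, not $\Delta_k$. Neither of your two sources supplies that bound:
\begin{itemize}
\item Eventual boundedness of $\|\BFGbar_k\|$ is not available at this point. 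The only result of that kind, Lemma~\ref{lem:grad-ub-io}, is proved through Lemma~\ref{lem:reg-ub}, which uses this very corollary, so the argument would be circular. Even granting it, it only gives $\|\nabla f_k\|\le\kappa_u\lambda_k^{1/3}$. Then $\Delta_k^2=\mcO(\lambda_k^{1/3})$, the exponent drops to order $\lambda_k^{1/3}$, and summability fails for $\epsilon_\lambda<2$.
\item Your fallback $\Delta_k\le\frac14\sqrt{(1+\kappa_{eg}/\Lambda_{\min})\|\nabla f_k\|/\Lambda_k}$ does hold eventually. However, it is not uniform in $k$, since nothing in Assumptions~\ref{assum:smooth-f}--\ref{assum:grad-dominates} bounds $\|\nabla f\|$.
\end{itemize}

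The paper does not address this point either; it simply defers to the $i=\mathrm{s}$ case of Theorem~\ref{thm:bounded-error}, which is itself a pointer to \cite{ha2025complexity}. So your proof is as complete as the paper's if you take that case as given. The self-contained derivation you sketch would need an extra ingredient to close, such as an explicit cap on $\Delta_k$ or a bounded-gradient assumption. Separately, summability over $k$ requires a lower bound $\lambda_k\ge c(\log k)^{1+\epsilon_\lambda}$, not the $\mcO$ upper bound you cite.
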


\begin{proof}
By following an argument analogous to the proof of Theorem~\ref{thm:bounded-error} (with $i=\s$), where the adaptive sample sizes $N_k^j$ are chosen according to~\eqref{eq:as-candidate} at each perturbed point $\BFX_k + \Delta_k \BFe_j$, we have for each $j \in \{1, \dots, d\}$,
\begin{equation} \label{eq:grad-error-purterb}
    \mathbb{P}\left( \|\BFEbar_k^{g,j}\| > \tilde\kappa_{eg} \Delta_k^2 \text{ i.o.} \right) = 0.
\end{equation}
Substituting this bound into Lemma~\ref{lem:fd-hessian} yields for all sufficiently large $k$,
\[
\|\nabla^2 f(\BFX_k) - \sfH_k\| \le \sqrt{d} \left( \frac{\kappa_{L\sfH}}{2} \Delta_k + \frac{1}{\Delta_k} \cdot \tilde\kappa_{eg} \Delta_k^2 \right) = \kappa_{e\sfH} \Delta_k,
\]
which completes the proof.
\end{proof}

With Theorem~\ref{thm:bounded-error}
and Corollary~\ref{cor:hessian-error}, we almost surely achieve similar model accuracy guarantee as in fully quadratic models (Definition~\ref{defn:fullyquadratic}) for large enough iterations. We also provide a stronger guarantee of $\mcO(\Delta_k^3)$ on the model gradient, which (as discussed in Section~\ref{sec:alg-ing}) is needed because of stochasticity to ensure eventual monotonicity. Moreover, the stronger gradient error bound based on the true gradient directly in Lemma~\ref{lem:bounded-error} will simplify the complexity analysis, specifically, in finding an upper bound for the true gradient (see Lemma~\ref{lem:monotone_decreasing}).

\section{Complexity Results}\label{sec:complexity}

In this section, we provide complexity analysis for Algorithm~\ref{alg:ASUTRO}. We first assume a uniform bound on the model Hessian to control the curvature of the quadratic subproblem. Based on this, we derive the optimality conditions and a lower bound for the predicted model reduction.

\begin{assumption}[Bounded Hessian in Norm] \label{assum:lipschtizmodelHessian}
    In Algorithm~\ref{alg:ASUTRO}, the model Hessian $\sfH_k$ satisfies  $\|\sfH_k\|\le \kappa_{\sfH}$ for all $k$ and some $\kappa_{\sfH}\in(0,\infty)$ almost surely.
\end{assumption}
This assumption would be automatically satisfied for large $k$ using the fact that if $f$ is twice-continuously differentiable and $\nabla f$ is $\kappa_{Lg}$-Lipschitz (Assumption~\ref{assum:smooth-f}), then $\|\nabla^2 f\|\le \kappa_{Lg}$, and therefore by  Corollary~\ref{cor:hessian-error} we eventually have $\|\sfH_k\|\le \|\nabla^2 f(\BFX_k)\|+\|\nabla^2 f(\BFX_k)-\sfH_k\|\le\kappa_{Lg}+\kappa_{e\sfH}$.

\begin{lemma}
\label{lem:exactsln}
    For each $k \in \mathbb{N}$, $\BFS_k$ is a global solution of \eqref{ASUTRO:subproblem} if and only if there exists a dual multiplier $\ell_k \ge 0$ satisfying
\begin{align}
    \ell_k\big(\|\BFS_k \| - \Delta_k\big) &= 0, \label{eq:opt-2}\\
    (\sfH_k + \sqrt{\Lambda_k \|\BFGbar_k^0\|}\,\sfI_d + \ell_k \sfI_d)\BFS_k &= -\BFGbar_k^0, \label{eq:opt-3}\\
    \sfH_k + \sqrt{\Lambda_k \|\BFGbar_k^0\|}\,\sfI_d + \ell_k \sfI_d &\succeq 0. \label{eq:opt-4}
\end{align}
\end{lemma}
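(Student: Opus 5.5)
The plan is to reduce the statement to the classical characterization of global minimizers of the trust-region subproblem (Moré–Sorensen / Gay; see also Theorem 4.1 in Nocedal–Wright). First I rewrite the subproblem \eqref{eq:subproblem} in standard form. Since $\Delta_k = \sqrt{\|\BFGbar_k^0\|/(16\Lambda_k)}$, the regularization weight is
\[
\Lambda_k\Delta_k = \tfrac14\sqrt{\Lambda_k\|\BFGbar_k^0\|}.
\]
I will adopt the scaling convention implicit in \eqref{eq:opt-3}, namely that the shift appearing in the model Hessian is $\sigma_k := \sqrt{\Lambda_k\|\BFGbar_k^0\|}$. If the constant $1/4$ is kept instead, it just rescales $\sigma_k$, and the argument is unchanged. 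With this convention, \eqref{eq:subproblem} reads
\[
\min_{\|\BFs\|\le\Delta_k} q_k(\BFs) := \BFs^\intercal\BFGbar_k^0 + \tfrac12\BFs^\intercal \mathsf{B}_k \BFs, \qquad \mathsf{B}_k := \sfH_k + \sigma_k\sfI_d,
\]
up to the additive constant $\Fbar_k$. This is a quadratic (possibly nonconvex) objective over a Euclidean ball.

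For sufficiency, assume $\ell_k\ge0$ satisfies \eqref{eq:opt-2}--\eqref{eq:opt-4}, and let $\BFs$ be any point with $\|\BFs\|\le\Delta_k$. Using \eqref{eq:opt-3} to eliminate $\BFGbar_k^0$ and expanding, I will verify the identity
\[
q_k(\BFs) - q_k(\BFS_k) = \tfrac12(\BFs-\BFS_k)^\intercal(\mathsf{B}_k+\ell_k\sfI_d)(\BFs-\BFS_k) + \tfrac{\ell_k}{2}\big(\|\BFS_k\|^2 - \|\BFs\|^2\big).
\]
The first term is nonnegative by \eqref{eq:opt-4}. For the second term, complementarity \eqref{eq:opt-2} gives either $\ell_k=0$ or $\|\BFS_k\|=\Delta_k\ge\|\BFs\|$, so it is nonnegative too. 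Hence $\BFS_k$ is a global minimizer.

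For necessity, assume $\BFS_k$ is a global minimizer and split into two cases.
\begin{itemize}
\item \emph{Interior case, $\|\BFS_k\|<\Delta_k$.} $\BFS_k$ is an unconstrained local minimizer, so first- and second-order conditions give $\mathsf{B}_k\BFS_k=-\BFGbar_k^0$ and $\mathsf{B}_k\succeq0$. Taking $\ell_k=0$ yields all three conditions.
\item \emph{Boundary case, $\|\BFS_k\|=\Delta_k$.} The constraint gradient $2\BFS_k$ is nonzero, so LICQ holds and KKT provides $\ell_k\ge0$ with \eqref{eq:opt-2}--\eqref{eq:opt-3}.
\end{itemize}

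The main obstacle is \eqref{eq:opt-4} in the boundary case, because standard second-order necessary conditions only give positive semidefiniteness on the tangent space $\{\BFw:\BFw^\intercal\BFS_k=0\}$. I would handle it with the Moré–Sorensen trick. Take any $\BFw$ with $\BFw^\intercal\BFS_k\neq0$, and consider the point $\BFs=\BFS_k+t\BFw$ with $t=-2\BFw^\intercal\BFS_k/\|\BFw\|^2$; this choice gives $\|\BFs\|=\Delta_k$. Plugging $\BFs$ into the identity above, the second term vanishes, so global optimality forces $\BFw^\intercal(\mathsf{B}_k+\ell_k\sfI_d)\BFw\ge0$. For $\BFw$ orthogonal to $\BFS_k$, the same inequality follows by density and continuity. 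This establishes \eqref{eq:opt-4} and completes the equivalence.

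Finally, I note that the degenerate case $\BFGbar_k^0=\boldsymbol{0}$ (so $\Delta_k=0$) is trivial: $\BFS_k=\boldsymbol{0}$, and one takes $\ell_k$ large enough that \eqref{eq:opt-4} holds.
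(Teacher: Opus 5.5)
Your route is the paper's: its proof simply cites Theorem 4.1 of Nocedal--Wright, and you reprove that theorem. The main steps are sound:
\begin{itemize}
\item the identity for $q_k(\BFs)-q_k(\BFS_k)$ is correct;
\item KKT under LICQ on the active constraint gives $\ell_k\ge 0$ directly;
\item the reflection $\BFs=\BFS_k+t\BFw$ with $t=-2\BFw^\intercal\BFS_k/\|\BFw\|^2$, followed by a density argument, yields \eqref{eq:opt-4}.
\end{itemize}
Your scaling remark is also correct and worth keeping. Since $\Lambda_k\Delta_k=\tfrac14\sqrt{\Lambda_k\|\BFGbar_k^0\|}$, the conditions as printed characterize minimizers of the problem with regularizer $\tfrac12\sqrt{\Lambda_k\|\BFGbar_k^0\|}\,\|\BFs\|^2=2\Lambda_k\Delta_k\|\BFs\|^2$. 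That is not the $\tfrac12\Lambda_k\Delta_k\|\BFs\|^2$ literally written in \eqref{eq:subproblem}, and the paper's one-line citation passes over the mismatch.

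There is one gap, in the sufficiency direction. You show $q_k(\BFs)\ge q_k(\BFS_k)$ for every $\BFs$ with $\|\BFs\|\le\Delta_k$ and conclude that $\BFS_k$ is a global solution. That conclusion also needs $\|\BFS_k\|\le\Delta_k$, and \eqref{eq:opt-2}--\eqref{eq:opt-4} do not imply it when $\ell_k=0$.

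Concretely, take $\sfH_k=0$ and $\ell_k=0$. The point $\BFs=-\BFGbar_k^0/\sqrt{\Lambda_k\|\BFGbar_k^0\|}$ satisfies all three conditions, yet $\|\BFs\|=\sqrt{\|\BFGbar_k^0\|/\Lambda_k}=4\Delta_k$, so it lies outside the ball. Feasibility must therefore be an explicit hypothesis of the ``if'' direction, as it is in the standard statement of the cited theorem.

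In the paper this is harmless, because $\BFS_k$ is defined as the minimizer of the subproblem and is feasible by construction. Your sufficiency argument should still state this assumption; with it added, the proof is complete.
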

\begin{proof}
    The results are directly obtained from Theorem 4.1 in \cite{nocedal1999numerical}.
\end{proof}

\begin{lemma} \label{lem:model-reduction}
    Let $(\BFS_k,\ell_k)$ satisfy the optimal conditions~\eqref{eq:opt-2}-\eqref{eq:opt-4}. Then, for all $k \in \mathbb{N}$,
    \begin{equation} 
        M_k(\boldsymbol{0})-M_k(\BFS_k) \ge 
        2 \Lambda_k \Delta_k\|\BFS_k\|^2.
    \end{equation}
\end{lemma}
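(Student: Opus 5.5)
The plan is to work directly from the optimality conditions of Lemma~\ref{lem:exactsln}: use \eqref{eq:opt-3} to eliminate the linear term $\BFS_k^\intercal \BFGbar_k^0$ from the model decrease, and use \eqref{eq:opt-4} to discard a positive semidefinite quadratic form. Throughout, abbreviate $\sigma_k := \sqrt{\Lambda_k \|\BFGbar_k^0\|}$. By the definition $\Delta_k = \sqrt{\|\BFGbar_k^0\|/(16\Lambda_k)}$ in Step~\ref{ASUTRO:subproblem} of Algorithm~\ref{alg:ASUTRO},
\[
\sigma_k = 4\Lambda_k\Delta_k .
\]
This identity is what converts the regularization weight appearing in \eqref{eq:opt-3}--\eqref{eq:opt-4} into the form $\Lambda_k\Delta_k$ used in the statement.

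\textbf{Step 1 (rewrite the decrease).} From the model \eqref{eq:model},
\[
M_k(\boldsymbol{0})-M_k(\BFS_k) = -\BFS_k^\intercal\BFGbar_k^0 - \tfrac12 \BFS_k^\intercal \sfH_k \BFS_k .
\]
Multiplying \eqref{eq:opt-3} on the left by $\BFS_k^\intercal$ gives
\[
-\BFS_k^\intercal \BFGbar_k^0 = \BFS_k^\intercal(\sfH_k+\sigma_k\sfI_d+\ell_k\sfI_d)\BFS_k .
\]
Substituting this in and splitting off half of the full matrix, we obtain
\[
M_k(\boldsymbol{0})-M_k(\BFS_k) = \tfrac12\,\BFS_k^\intercal(\sfH_k+\sigma_k\sfI_d+\ell_k\sfI_d)\BFS_k + \tfrac12(\sigma_k+\ell_k)\|\BFS_k\|^2 .
\]

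\textbf{Step 2 (drop nonnegative terms).} The first term is nonnegative by \eqref{eq:opt-4}. In the second term, $\ell_k\ge 0$, so discarding $\ell_k$ only lowers the bound. Hence
\[
M_k(\boldsymbol{0})-M_k(\BFS_k) \ge \tfrac12\sigma_k\|\BFS_k\|^2 .
\]
Inserting $\sigma_k = 4\Lambda_k\Delta_k$ gives exactly $2\Lambda_k\Delta_k\|\BFS_k\|^2$, which is the claim. Note that \eqref{eq:opt-2} (complementarity) is not needed, and no assumption on $\sfH_k$ beyond \eqref{eq:opt-4} is used.

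\textbf{Expected obstacle.} The algebra is routine, so the only delicate point is bookkeeping of the regularization constant. The subproblem \eqref{eq:subproblem} is written with weight $\tfrac12\Lambda_k\Delta_k\|\BFs\|^2$, whose first-order conditions would produce $\Lambda_k\Delta_k = \sigma_k/4$ rather than $\sigma_k$. I will follow the optimality system exactly as stated in Lemma~\ref{lem:exactsln}, with weight $\sigma_k$, since the lemma is phrased in terms of \eqref{eq:opt-2}--\eqref{eq:opt-4}. Under that reading, the bound $\tfrac12\sigma_k\|\BFS_k\|^2$ translates into the factor $2$ in the statement. I would flag this normalization explicitly in the write-up so the constant can be checked against the convention used downstream.
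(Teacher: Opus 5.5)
Your proposal is correct and matches the paper's proof essentially line for line: substitute \eqref{eq:opt-3} into the linear term, split off $\tfrac12\bigl(\sqrt{\Lambda_k\|\BFGbar_k^0\|}+\ell_k\bigr)\|\BFS_k\|^2$, discard the quadratic form via \eqref{eq:opt-4}, drop $\ell_k\ge 0$, and use $\|\BFGbar_k^0\|=16\Lambda_k\Delta_k^2$. Your normalization flag is also well founded. With the literal weight $\tfrac12\Lambda_k\Delta_k\|\BFs\|^2$ in \eqref{eq:subproblem}, the multiplier system would carry $\Lambda_k\Delta_k=\tfrac14\sqrt{\Lambda_k\|\BFGbar_k^0\|}$ and the bound would weaken to $\tfrac12\Lambda_k\Delta_k\|\BFS_k\|^2$. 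Since the lemma is stated in terms of \eqref{eq:opt-2}--\eqref{eq:opt-4}, however, your argument proves it as written, exactly as the paper does.
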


\begin{proof}
Let $\omega \in \Omega$ denote a realization of Algorithm~\ref{alg:ASUTRO} with a nonzero probability. For the remainder of this proof, we omit the $\omega$ for simplicity.
From \eqref{eq:model}, we obtain
\begin{equation*}
    \begin{split}
        M_k(\boldsymbol{0})-M_k(\BFS_k) & = -(\BFS_k^\intercal\BFGbar_k^0+\frac{1}{2}\BFS_k^\intercal \sfH_k \BFS_k) \\  
        & = \BFS_k^\intercal(\sfH_k +  \sqrt{\Lambda_k \|\BFGbar_k^0\|} \sfI_d +\ell_k \sfI_d)\BFS_k - \frac{1}{2}\BFS_k^\intercal \sfH_k \BFS_k \\
        & =  \frac{1}{2}(\sqrt{\Lambda_k  \|\BFGbar_k^0\|} + \ell_k) \|\BFS_k\|^2 + \frac{1}{2}\BFS_k^\intercal (\sfH_k +   \sqrt{\Lambda_k \|\BFGbar_k^0\|} \sfI_d +\ell_k \sfI_d ) \BFS_k \\
        & \ge \frac{1}{2}(  \sqrt{\Lambda_k \|\BFGbar_k^0\|} +\ell_k) \|\BFS_k\|^2 \ge 2 \Lambda_k \Delta_k\|\BFS_k\|^2,
    \end{split}
\end{equation*}
where the first inequality uses~\eqref{eq:opt-4}, while the last inequality uses $\|\BFGbar_k^0\| = 16 \Lambda_k \Delta_k^2$.
\end{proof}

We now establish that the regularization parameter $\Lambda_k$ remains controlled and does not grow arbitrarily large. Since the update of $\Lambda_k$ is governed by the success of each iteration, we show that once $\Lambda_k$ exceeds a sufficiently large threshold, the algorithm consistently generates successful steps. To facilitate this analysis, we partition the iterations into two representative index sets as follows,
\begin{align*}
\mcD &:= \{k \in \mathbb{N} : (\rho_k \ge \eta) \bigcap (\|\BFS_k\| \ge \theta \Delta_k)\}, \\
\mcG &:= \{k \in \mathbb{N} : (\|\BFGbar_k^\s\| \le \eta \|\BFGbar_k^0\|) \bigcap (\Lambda_k \ge \max\{\mu \|\BFGbar_k^0\|, \lambda_k\})\}.
\end{align*}
The set $\mcD$ corresponds to successful iterations due to Criteria 1 in Algorithm~\ref{alg:ASUTRO} (counterpart of Condition~\eqref{eq:sufficient-reduction} in the deterministic setting); iterations in $\mcD$ have relatively large step-sizes. In contrast, $\mcG$ represents successful iterations due to Criteria 2 (counterpart of Condition~\eqref{eq:gradient-contraction} in the deterministic setting); iterations in $\mcG$ have gradient magnitudes effectively reduced through interior steps.

\begin{lemma} \label{lem:reg-ub}
    Suppose that Assumptions~\ref{assum:smooth-f}--\ref{assum:lipschtizmodelHessian} hold. For iterates $\{\BFX_k\}$ generated by Algorithm~\ref{alg:ASUTRO}, there exist constants $\Lambda_\mcD$ and $\Lambda_{\mcG}$ such that
    \begin{equation*}
        \mathbb{P} \left( \Lambda_k > \gamma_1 \max\{\Lambda_\mcD,\Lambda_\mcG,\mu\|\BFGbar_k^0\|, \lambda_k\}  \ \text{i.o.} \right) = 0.
    \end{equation*}
\end{lemma}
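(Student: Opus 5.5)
The plan is the classical ``large regularization forces success'' argument, carried out pathwise on the probability-one event where the estimation and Hessian errors are eventually controlled. Fix $\omega$ outside the union of the null sets given by Theorem~\ref{thm:bounded-error} (for $i\in\{0,\s\}$, with $\alpha=3$ for the gradient at the center and $\alpha=3$ for function values at both points) and by Corollary~\ref{cor:hessian-error}. There is then a $K(\omega)$ such that for all $k\ge K(\omega)$:
\begin{itemize}
\item $\|\BFEbar_k^{g,0}\|\le\tilde\kappa_{eg}\Delta_k^3$;
\item $|\Ebar_k^{0}|,|\Ebar_k^{\s}|\le\tilde\kappa_{ef}\Delta_k^3$;
\item $\|\nabla^2 f(\BFX_k)-\sfH_k\|\le\kappa_{e\sfH}\Delta_k$.
\end{itemize}
The core of the proof is a deterministic claim: if $k\ge K(\omega)$ and $\Lambda_k\ge\max\{\Lambda_\mcD,\Lambda_\mcG,\mu\|\BFGbar_k^0\|,\lambda_k\}$, then iteration $k$ is successful, so $\Lambda_{k+1}\le\Lambda_k$. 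Once the claim holds, the statement follows by the usual induction. An unsuccessful step multiplies $\Lambda_k$ by $\gamma_1$, so $\Lambda_k$ can exceed $\gamma_1$ times the threshold only if it was already above the threshold at the previous index, and there it must have been successful. I expect to handle the drift of the moving threshold ($\lambda_k$ increasing, $\|\BFGbar_k^0\|$ changing) by exploiting that $\lambda_k$ is nondecreasing, and that after an unsuccessful step $\BFX_{k+1}=\BFX_k$. In that case the preliminary gradient is reused and only a fresh estimate of the same point enters, so its norm is close to $\|\BFGbar_k^0\|$ up to the eventual error bounds.

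To prove the claim I would split according to whether the step is interior ($\|\BFS_k\|<\theta\Delta_k$) or not.

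\textbf{Boundary-type case ($\|\BFS_k\|\ge\theta\Delta_k$).} I aim to show $\rho_k>\eta$. Write
\[
\Fbar_k-\Fbar_k^\s-(M_k(\boldsymbol 0)-M_k(\BFS_k))
\]
as $f(\BFX_k)-f(\BFX_k+\BFS_k)$ plus the Taylor model of $f$, plus the error terms. The Taylor remainder is at most $\tfrac{\kappa_{L\sfH}}{6}\|\BFS_k\|^3$. The gradient and Hessian mismatches contribute $\tilde\kappa_{eg}\Delta_k^3\|\BFS_k\|+\tfrac12\kappa_{e\sfH}\Delta_k\|\BFS_k\|^2$. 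The function noise contributes $2\tilde\kappa_{ef}\Delta_k^3$. All of these are $\mcO(\Delta_k^3)$. By Lemma~\ref{lem:model-reduction} the predicted reduction is at least $2\Lambda_k\Delta_k\|\BFS_k\|^2\ge2\theta^2\Lambda_k\Delta_k^3$, so $|1-\rho_k|\le C/(\theta^2\Lambda_k)$. Choosing $\Lambda_\mcD=C/(\theta^2(1-\eta))$ gives Criteria~1.

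\textbf{Interior case ($\|\BFS_k\|<\theta\Delta_k<\Delta_k$).} Here $\ell_k=0$ by \eqref{eq:opt-2}, so \eqref{eq:opt-3} gives $\BFGbar_k^0+\sfH_k\BFS_k=-\sqrt{\Lambda_k\|\BFGbar_k^0\|}\,\BFS_k=-4\Lambda_k\Delta_k\BFS_k$. I would bound $\|\BFGbar_k^\s\|$ by combining four pieces:
\begin{itemize}
\item the error $\|\BFEbar_k^{g,\s}\|\le\tilde\kappa_{eg}\Delta_k^2$;
\item Lemma~\ref{lem:smooth-bound}, giving $\|\nabla f(\BFX_k^\s)-\nabla f(\BFX_k)-\nabla^2f(\BFX_k)\BFS_k\|\le\tfrac{\kappa_{L\sfH}}2\Delta_k^2$;
\item the Hessian and gradient errors at the center, which are $\mcO(\Delta_k^2)$;
\item the residual $4\Lambda_k\Delta_k\|\BFS_k\|<4\theta\Lambda_k\Delta_k^2=\tfrac{\theta}{4}\|\BFGbar_k^0\|$.
\end{itemize}
The first three sum to $C'\Delta_k^2=C'\|\BFGbar_k^0\|/(16\Lambda_k)$. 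Altogether $\|\BFGbar_k^\s\|\le(\theta/4+C'/(16\Lambda_k))\|\BFGbar_k^0\|$. I would pick $\Lambda_\mcG$ so that this coefficient is at most $\eta$; this needs $\theta/4<\eta$, which holds since $\theta<1$ and $\eta>1/4$. The second half of Criteria~2 holds by the hypothesis on $\Lambda_k$, so Criteria~2 is satisfied.

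The main obstacle I expect is the bookkeeping of the moving threshold in the induction step rather than the two cases. The issue is ensuring that one unsuccessful step from just below the threshold cannot overshoot by more than the factor $\gamma_1$ when $\mu\|\BFGbar_k^0\|$ changes between iterations. As noted above, I would handle this by using $\BFX_{k+1}=\BFX_k$ on unsuccessful steps together with the eventual error bounds relating successive gradient estimates at the same point, possibly absorbing a constant factor into $\Lambda_\mcG$.
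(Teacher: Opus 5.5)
Your core claim is correct and is what the paper's two cases establish: for all large $k$, $\Lambda_k\ge\max\{\Lambda_\mcD,\Lambda_\mcG,\mu\|\BFGbar_k^0\|,\lambda_k\}$ forces success. Splitting by $\|\BFS_k\|\ge\theta\Delta_k$ versus $\|\BFS_k\|<\theta\Delta_k$ instead of by $\ell_k>0$ versus $\ell_k=0$ works too; you pay $\theta^{-2}$ in $\Lambda_\mcD$ and gain contraction factor $\theta/4$ instead of $1/4$.

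One slip: use $\alpha=2$, not $\alpha=3$, for $\|\BFEbar_k^{g,0}\|$. With $\alpha=3$ the term $\tilde\kappa_{eg}\Delta_k^3\|\BFS_k\|$ is $\mcO(\Delta_k^4)$, and $\Delta_k$ is not known to be bounded, so your $C$ and $C'$ would not be constants.

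The real gap is in passing from this claim to the i.o.\ statement. The dangerous drift of the threshold $T_k:=\max\{\Lambda_\mcD,\Lambda_\mcG,\mu\|\BFGbar_k^0\|,\lambda_k\}$ happens at \emph{successful} iterations, which your sketch does not treat. There $\BFX_{k+1}=\BFX_k^\s\neq\BFX_k$, so $\|\BFGbar_{k+1}^0\|$ can be far below $\|\BFGbar_k^0\|$ (a Criteria~2 step already cuts it by roughly $\eta$), while $\Lambda$ shrinks only by $\gamma_2$. Suppose $\mu\|\BFGbar_k^0\|$ dominates $T_k$ by a margin, $\Lambda_k\in(T_k,\gamma_1T_k]$, and the estimate drops by a factor below $\gamma_2/\gamma_1$. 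Then $\Lambda_{k+1}>\gamma_1T_{k+1}$. So ``success implies $\Lambda_{k+1}\le\Lambda_k$'' does not propagate the bound, and your backward induction breaks exactly at successful steps.

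Your remedy covers only unsuccessful steps, and even there a fresh estimate at the same point gives only $T_{k+1}\ge\frac{1-c}{1+c}T_k$ with $c=\kappa_{eg}/\Lambda_{\min}$. That factor multiplies $\mu\|\BFGbar_k^0\|$, so it cannot be absorbed into the constant $\Lambda_\mcG$; Criteria~2 tests $\Lambda_k>\mu\|\BFGbar_k\|$ itself. The paper's proof has the same hole: it goes straight from the two cases to ``cannot exceed \dots\ infinitely often''. So you did not miss an idea from the paper, but your sketch does not close the gap.

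One way to close it is to bound the gradient first.
\begin{enumerate}
\item The proof of Lemma~\ref{lem:monotone_decreasing} does not use this lemma. It uses only Theorem~\ref{thm:bounded-error}, Corollary~\ref{cor:hessian-error}, Assumption~\ref{assum:lipschtizmodelHessian}, and $\Lambda_k\ge\lambda_k$ on $\mcG$. Hence $f(\BFX_k)$ is almost surely eventually non-increasing.
\item Then $\|\nabla f(\BFx)\|^2\le 2\kappa_{Lg}(f(\BFx)-f^*)$ bounds $\|\nabla f(\BFX_k)\|$ pathwise, and Theorem~\ref{thm:bounded-error} with $\alpha=0$ bounds $\|\BFGbar_k^0\|$.
\item Since $\lambda_k\to\infty$, eventually $T_k=\lambda_k$, which is nondecreasing. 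The standard argument then goes through with the exact factor $\gamma_1$:
\begin{itemize}
\item an unsuccessful step forces $\Lambda_k\le\lambda_k$, hence $\Lambda_{k+1}\le\gamma_1\lambda_{k+1}$;
\item a successful step gives $\Lambda_{k+1}\le\max\{\gamma_2\Lambda_k,\Lambda_{\min}\}$;
\item $\Lambda_k>\gamma_1\lambda_k$ cannot persist, since it would force consecutive successes and geometric decrease of $\Lambda_k$.
\end{itemize}
\end{enumerate}
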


\begin{proof}
    Let $\omega \in \Omega$ denote a realization of Algorithm~\ref{alg:ASUTRO} with a nonzero probability; we omit $\omega$ for simplicity. The analysis is partitioned into two cases based on the value of the dual multiplier: $\ell_k = 0$ and $\ell_k > 0$.
    
    \textbf{Case 1}: $\ell_k > 0$. In this case, \eqref{eq:opt-2} yields the active constraint $\|\mathbf{S}_k\| = \Delta_k$. By the third-order Taylor expansion and the Lipschitz continuity of the Hessian (Assumption~\ref{assum:smooth-f}), the function value at the candidate point satisfies
    \begin{equation} \label{eq:function-reduction-actual}
        \underbrace{\Fbar_k^s-\Ebar_k^s}_{f_k^\s} \le \underbrace{\Fbar_k^0-\Ebar_k^0}_{f_k^0} + \BFS_k^\top \nabla f_k^0 + \frac{1}{2}\BFS_k^\top \nabla^2 f_k^0 \BFS_k + \frac{\kappa_{L\sfH}}{6}\|\BFS_k\|^3.
    \end{equation} 
    From~\eqref{eq:model} and~\eqref{eq:function-reduction-actual}, we obtain for sufficiently large $k$,
    \begin{align*}
        |\Fbar_k^s-M_k(\BFS_k)| &\le \| \nabla f_k^0 - \BFGbar_k^0 \| \|\BFS_k\| + \frac{1}{2} \|\nabla^2 f_k^0 - \sfH_k\| \|\BFS_k\|^2 + \frac{\kappa_{L\sfH}}{6} \|\BFS_k\|^3 + |\Ebar_k^{\s} - \Ebar_k^{0}| \\
        &\le \tilde\kappa_{eg} \Delta_k^3 + \frac{\kappa_{e\sfH}}{2} \Delta_k^3 + \frac{\kappa_{L\sfH}}{6} \Delta_k^3 + 2 \tilde\kappa_{ef} \Delta_k^3 
        = \left(\tilde\kappa_{eg} + \frac{\kappa_{e\sfH}}{2} + \frac{\kappa_{L\sfH}}{6}+2\tilde\kappa_{ef}\right)\Delta_k^3,
    \end{align*}
    where the estimation error bounds follow from Theorem~\ref{thm:bounded-error} and Corollary~\ref{cor:hessian-error}. 
    From Lemma~\ref{lem:model-reduction} and the fact that $\|\BFS_k\| = \Delta_k$, the success ratio satisfies 
    \begin{equation*}
    \begin{split}    
        |1 - \rho_k| = \frac{|\Fbar_k^s-M_k(\BFS_k)|}{|M_k(\boldsymbol{0})-M_k(\BFS_k)|} 
        &\le \frac{\left(\tilde\kappa_{eg} + \frac{\kappa_{e\sfH}}{2} + \frac{\kappa_{L\sfH}}{6}+2\tilde\kappa_{ef}\right) \Delta_k^3}{2 \Lambda_k \Delta_k^3} \\ 
        &\le \frac{\left(\tilde\kappa_{eg} + \frac{\kappa_{e\sfH}}{2} + \frac{\kappa_{L\sfH}}{6}+2\tilde\kappa_{ef}\right)}{2 \Lambda_{\mcD}}.
    \end{split}
    \end{equation*}
    When $\Lambda_{\mcD}=\left(\tilde\kappa_{eg} + \frac{\kappa_{e\sfH}}{2} + \frac{\kappa_{L\sfH}}{6}+2\tilde\kappa_{ef}\right)(2-2\eta)^{-1}$, we ensure that $|1 - \rho_k| \le 1 - \eta$, which implies $\rho_k \ge \eta$. Thus, $k \in \mcD$ for sufficiently large $k$.

    \textbf{Case 2}: $\ell_k = 0.$ For sufficiently large $k$, we use the triangle inequality to bound $\|\BFGbar_k^\s \|$ as follows:
    \begin{align*}
    A_k &:= \|\BFGbar_k^\s - \nabla f_k^\s\| \le \tilde\kappa_{eg} \Delta_k^2, \\
    B_k &:= \|\nabla f_k^\s - (\nabla f_k^0 + \nabla^2 f_k^0 \BFS_k)\| \le \frac{\kappa_{L\sfH}}{2}\|\BFS_k\|^2, \\
    C_k &:= \|(\nabla f_k^0 + \nabla^2 f_k^0 \BFS_k) - (\BFGbar_k^0 + \sfH_k \BFS_k)\| \le \tilde\kappa_{eg}\Delta_k^2 + \kappa_{e\sfH} \Delta_k^{2}.
    \end{align*}
    Here, the bound for \(A_k\) follows from Theorem~\ref{thm:bounded-error}, the bound for \(B_k\) from Lemma~\ref{lem:smooth-bound}, and the bound for \(C_k\) from Theorem~\ref{thm:bounded-error} and Corollary~\ref{cor:hessian-error}.
    The triangle inequality with~\eqref{eq:opt-3} yields
    \begin{align*}
        \|\BFGbar_k^\s \| 
        \le A_k + B_k + C_k + \| \BFGbar_{k}^0 + \sfH_k \BFS_k \| 
        \le \left(2\tilde\kappa_{eg} + \frac{\kappa_{L\sfH}}{2} + \kappa_{e\sfH} \right) \Delta_k^2 + \sqrt{\Lambda_k\|\BFGbar_k^0\|}\Delta_k.
    \end{align*}
    Substituting $\Delta_k^2=\|\Gbar_k^0\|/(16\Lambda_k)$ yields
    \begin{equation} \label{eq:grad-contr-refined}
        \|\BFGbar_k^{\mathrm{s}}\| \le \left(\underbrace{\left( \frac{\tilde\kappa_{eg}}{8} + \frac{\kappa_{L\sfH}}{32} + \frac{\kappa_{e\sfH}}{16}\right)}_{:=\kappa_{g}}\cdot \frac{1}{\Lambda_k} + \frac{1}{4} \right) \|\BFGbar_k^0\|.
    \end{equation}
    For any target contraction factor $\eta \in (1/4, 1)$, if $\Lambda_k \ge \kappa_g/(\eta-\frac{1}{4})$, $\|\BFGbar_k^{\mathrm{s}}\| \le \eta \|\BFGbar_k^0\|$ holds for all sufficiently large $k$. Consequently, by defining
    \[
    \Lambda_k \ge \max\left\{ \underbrace{\frac{\kappa_g}{\eta-\frac{1}{4}}}_{:=\Lambda_{\mcG}}, \mu\|\BFGbar_k^0\|, \lambda_k \right\}
    \]
    it follows that $k \in \mathcal{G}$ for all sufficiently large $k$. 
    
    In conclusion, when 
    \(
    \Lambda_k \ge \max\{\Lambda_{\mcD}, \Lambda_{\mcG}, \mu\|\BFGbar_k^0\|, \lambda_k\},
    \)
    it follows that, for all sufficiently large $k$, either $k \in \mcD$ or $k \in \mcG$. 
    Hence, $\Lambda_k$ cannot exceed $\gamma_1 \max\{\Lambda_{\mcD}, \Lambda_{\mcG}, \mu\|\BFGbar_k^0\|, \lambda_k\}$ infinitely often, which completes the proof.
\end{proof}

An interesting observation from the proof of Lemma~\ref{lem:reg-ub} is that the successful steps that are on the boundary (due to the model being nonconvex or without a global minimum inside the TR) eventually turn out to produce sufficient reduction, whereas the successful interior steps (due to the model being strongly 
convex) eventually turn out to produce gradient contraction even though their reduction in the function value is not guaranteed. We next establish an upper bound on the true gradient norm along the trajectory, showing that it is controlled by the logarithmically growing scale $\lambda_k^{1/3}$.

\begin{lemma} \label{lem:grad-ub-io}
Suppose Assumptions~\ref{assum:smooth-f}--\ref{assum:lipschtizmodelHessian} hold. If objective function $f(\cdot)$ is bounded above by $f^u$, then the iterates $\{\BFX_k\}$ generated by Algorithm~\ref{alg:ASUTRO} ensures the existence of a finite positive constant $\kappa_u$ such that
\begin{equation*}
\mathbb{P} \left( \|\nabla f_k^0\| > \kappa_u \lambda_k^{1/3} \ \text{i.o.} \right) = 0.
\end{equation*}
\end{lemma}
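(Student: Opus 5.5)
The idea is to bound $\|\nabla f_k^0\|$ by the total true-function decrease, which is at most $f^u-f^*$. The $\lambda_k^{1/3}$ scale enters through the upper bound on $\Lambda_k$ in Lemma~\ref{lem:reg-ub}, combined with Criteria~2 and the gate $\Lambda_k>\max\{\mu\|\BFGbar_k\|,\lambda_k\}$. We work on the probability-one event on which the conclusions of Theorem~\ref{thm:bounded-error}, Corollary~\ref{cor:hessian-error} and Lemma~\ref{lem:reg-ub} hold for all $k\ge K(\omega)$, and bound $\|\nabla f_k^0\|$ deterministically there.

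\textbf{Step 1: relate true and estimated gradients.} With $\alpha=2$, Lemma~\ref{lem:bounded-error} and a small $\kappa_{eg}$ give, as in \eqref{eq:g-lowerbound-final}, $\tfrac12\|\nabla f_k^0\|\le\|\BFGbar_k^0\|\le 2\|\nabla f_k^0\|$ for large $k$. It therefore suffices to bound $\|\BFGbar_k^0\|$.

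\textbf{Step 2: bound the gradient at a successful iteration $k\in\mcD$ that is large.} Lemma~\ref{lem:model-reduction} and $\|\BFS_k\|\ge\theta\Delta_k$ give $M_k(\boldsymbol 0)-M_k(\BFS_k)\ge 2\theta^2\Lambda_k\Delta_k^3$. Then $\rho_k\ge\eta$ and the error bounds $|\Ebar_k^i|\le\tilde\kappa_{ef}\Delta_k^3$ yield
\[
f_k^0-f_k^\s\ge(2\eta\theta^2\Lambda_k-2\tilde\kappa_{ef})\Delta_k^3\ge c\,\Lambda_k\Delta_k^3=\frac{c}{64}\,\frac{\|\BFGbar_k^0\|^{3/2}}{\Lambda_k^{1/2}}
\]
once $\Lambda_k\ge\Lambda_{\min}$ is large relative to $\tilde\kappa_{ef}$; the constant $\tilde\kappa_{ef}$ can be taken small because $\kappa_{ef}$ is arbitrary. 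Lemma~\ref{lem:reg-ub} gives $\Lambda_k\le\gamma_1\max\{\Lambda_\mcD,\Lambda_\mcG,\mu\|\BFGbar_k^0\|,\lambda_k\}$. If the maximum is attained by $\mu\|\BFGbar_k^0\|$, the decrease is at least $c'\|\BFGbar_k^0\|$. Otherwise it is at least $c'\|\BFGbar_k^0\|^{3/2}\lambda_k^{-1/2}$. In either case the decrease cannot exceed $f^u-f^*$ plus the accumulated nonmonotone increases, so $\|\BFGbar_k^0\|\le C\max\{1,\lambda_k^{1/3}\}$, with constants depending on $f^u-f^*$.

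\textbf{Step 3: handle unsuccessful and $\mcG$ iterations.} On unsuccessful iterations $\BFX_k$ does not move, so $\nabla f_k^0$ is unchanged and the previous bound carries over, because $\lambda_k$ is nondecreasing. On $\mcG$ iterations, $\|\BFGbar_{k+1}^0\|$ is essentially $\|\BFGbar_k^\s\|\le\eta\|\BFGbar_k^0\|$ up to the $\mcO(\Delta_k^2)$ errors of Theorem~\ref{thm:bounded-error}, so the gradient contracts. The gate $\Lambda_k>\mu\|\BFGbar_k^0\|$ together with Lemma~\ref{lem:reg-ub} keeps this consistent. Hence, after time $K$, the gradient norm can grow only through $\mcD$ steps, where Step~2 applies. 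The constant $\kappa_u$ then absorbs $\|\nabla f_K^0\|$, which is finite, together with the constants from Step~2.

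\textbf{Main obstacle.} The hard part is the function-value accounting, because the algorithm is not monotone. I would bound $f_{k+1}-f_k$ on $\mcG$ steps by a Taylor bound, $\|\nabla f_k\|\,\|\BFS_k\|+\tfrac{\kappa_{Lg}}2\|\BFS_k\|^2\lesssim\|\BFGbar_k\|^{3/2}\Lambda_k^{-1/2}$. Since $\Lambda_k\ge\lambda_k$ on $\mcG$ and the gradients contract geometrically along consecutive $\mcG$ steps, these increases are dominated by a geometric sum proportional to the gradient level at the start of the run. I would then argue by contradiction: if $\|\nabla f_k^0\|>\kappa_u\lambda_k^{1/3}$ infinitely often, the gradient must repeatedly rise through $\mcD$ steps. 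Each such step forces a decrease of order $\kappa_u^{3/2}\lambda_k^{1/2}\lambda_k^{-1/2}=\kappa_u^{3/2}$, which beats the bounded total range $f^u-f^*$ once $\kappa_u$ is chosen large.
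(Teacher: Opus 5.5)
This is essentially the paper's proof. At a large-$k$ iteration in $\mcD$ the true decrease is at least of order $\|\BFGbar_k^0\|^{3/2}\Lambda_k^{-1/2}$, and the bound from Lemma~\ref{lem:reg-ub} splits into the cases $\Lambda_k\lesssim\mu\|\BFGbar_k^0\|$ and $\Lambda_k\lesssim\lambda_k$. The single-step bound $f_k^0-f_k^\s\le f^u-f^*$ then gives $\|\BFGbar_k^0\|\lesssim\max\{1,\lambda_k^{1/3}\}$ on $\mcD$, while $\mcG$ steps contract the gradient and unsuccessful steps leave it unchanged.

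Two corrections. First, the nonmonotone bookkeeping you call the main obstacle is not needed: since $f^*\le f\le f^u$ everywhere, that one-step bound is immediate, which is exactly why the lemma assumes $f$ bounded above, and your closing contradiction uses nothing more. Second, you omit a step you do need: the Lipschitz transfer $\|\nabla f_k^\s\|\le\|\nabla f_k^0\|+\kappa_{Lg}\Delta_k$, with $\Delta_k\le\sqrt{\|\BFGbar_k^0\|/(16\Lambda_{\min})}$, to pass from the bound at the incumbent of a $\mcD$ step to the new iterate $\BFX_k^\s$.
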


\begin{proof}
    Let $\omega \in \Omega$ denote a realization of Algorithm~\ref{alg:ASUTRO} with a nonzero probability. We omit $\omega$ for simplicity. 
    From Lemma~\ref{lem:reg-ub}, we have $\Lambda_k \le \gamma_1 \max\{\Lambda_\mcD,\Lambda_\mcG,\mu\|\BFGbar_k^0\|, \lambda_k\} =: \Lambda_k^{\max}$ for all sufficiently large $k$. Since $\Lambda_{\mcD}$ and $\Lambda_{\mcG}$ are fixed positive constants and $\lambda_k \to \infty$, for all sufficiently large $k$,
    \(
    \lambda_k \ge \max\{\Lambda_{\mcD},\Lambda_{\mcG}\}.
    \)
    Hence, 
    \(
    \Lambda_k^{\max} = \max\{\mu\|\BFGbar_k^0\|,\lambda_k\}
    \)
    for all sufficiently large $k$. Therefore, it suffices to consider the following two cases: (i) $\Lambda_k^{\max} = \mu \|\BFGbar_k^0\|$ and (ii) $\Lambda_k^{\max} = \lambda_k$.
    
    \textbf{Case 1: $\Lambda_k^{\max} = \mu \|\BFGbar_k^0\|$.} 
    First, consider the case $k \in \mcD$. 
    By the definition of the success ratio $\rho_k \ge \eta$ and the model reduction bound in Lemma~\ref{lem:model-reduction}, the observed reduction satisfies $\Fbar_k^0 - \Fbar_k^\s \ge 2\theta^2 \eta \Lambda_k \Delta_k^3$. Then, for all sufficiently large $k$, it follows from Theorem~\ref{thm:bounded-error} that
    \(
        f_k^0 - f_k^\s = (\Fbar_k^0 - \Fbar_k^\s) + (\Ebar_k^{\s} - \Ebar_k^{0}) 
        \ge 2\theta^2 \eta \Lambda_k \Delta_k^3 - 2\tilde\kappa_{ef} \Delta_k^3.
    \)
    By choosing $\kappa_{eg}\le (2\Lambda_{\min})^{-1}$ and $\kappa_{ef} \le \theta^2\eta\Lambda_{\min}/(256\sqrt{2})$ in Theorem~\ref{thm:bounded-error}, we get $\tilde\kappa_{ef} \le \theta^2 \eta \Lambda_{\min}/2$ and consequently obtain for sufficiently large $k$
    \begin{equation} \label{eq:true_fn_rd}    
        f_k^0 - f_k^\s \ge \theta^2 \eta \Lambda_k \Delta_k^3.
    \end{equation}
    Since the objective function is bounded below by $f^*$, this implies
    \begin{equation*}
        f^* < f_k^\s \le f_k^0 - \theta^2 \eta \Lambda_k \Delta_k^3 = f_k^0 - \frac{\theta^2 \eta}{64 \sqrt{\Lambda_k}} \|\BFGbar_k^0\|^{3/2} \le f^u - \frac{\theta^2 \eta}{64 \sqrt{\mu \|\BFGbar_k^0\|}} \|\BFGbar_k^0\|^{3/2},
    \end{equation*}
    where we substituted $\Delta_k = (\|\BFGbar_k^0\|/16\Lambda_k)^{1/2}$ and $\Lambda_k \le \mu \|\BFGbar_k^0\|$. This implies an upper bound on the estimated gradient as follows.
    \begin{equation} \label{eq:Gbar_ub}
        \|\BFGbar_k^0\| \le \left( \frac{64 \sqrt{\mu}}{\theta^2 \eta} (f^u - f^*) \right)^{2/3}.
    \end{equation}
   From~\eqref{eq:Gbar_ub} and Lemma~\ref{lem:bounded-error} with $\alpha = 0$, we observe that for sufficiently large $k$ and any $\kappa_{eg}>0$:  \begin{equation} \label{eq:true-g-bound-step}
        \|\nabla f_k^0\| \le \|\BFGbar_k^0\| + \|\nabla f_k^0 - \BFGbar_k^0\|
        \le \left( \frac{64 \sqrt{\mu}}{\theta^2 \eta} (f^u - f^*) \right)^{2/3} + \kappa_{eg} := \kappa_{u1}.
    \end{equation}
    Moreover, we know from the Lipschitz continuity of the gradient (Assumption~\ref{assum:smooth-f})
    \begin{equation*}
        \|\nabla f_k^\s - \nabla f_k^0\| \le \kappa_{Lg} \|\BFS_k\| \le \kappa_{Lg} \Delta_k = \frac{\kappa_{Lg}}{4\sqrt{\Lambda_k}} \sqrt{\|\BFGbar_k^0\|} \le \frac{\kappa_{Lg}}{4\sqrt{\mu}}.
    \end{equation*}
    Combining \eqref{eq:true-g-bound-step} with this variation, we obtain the bound for the candidate gradient as follows.
    \begin{equation} \label{eq:candidate_gradient_ub_final}
        \|\nabla f_k^\s \| \le \|\nabla f_k^0\| + \|\nabla f_k^\s - \nabla f_k^0\| 
        \le \kappa_{u1} + \frac{\kappa_{Lg}}{4\sqrt{\mu}}=: \kappa_{u2}.
    \end{equation}
    Finally, we examine the event $\{\|\nabla f_k^0\| > \kappa_{u1}\}$, implying that $k$ cannot be in $\mcD$. If $k \in \mcG$, we obtain from Theorem~\ref{thm:bounded-error} (with $\alpha = 2$) that
    \begin{align*} 
    \|\nabla f_k^\s\| - \tilde\kappa_{eg}\Delta_k^2
    \le \|\nabla f_k^\s\| - \|\nabla f_k^\s - \BFGbar_k^\s\|
    \le \|\BFGbar_k^\s\|
    \le \eta\|\BFGbar_k^{0}\| = 16 \eta \Lambda_k \Delta_k^2,
    \end{align*}
    Therefore, we have from Lemma~\ref{lem:bounded-error} (with $\alpha = 2$)
    \begin{align} \label{eq:true-gradient-contraction}   
    \|\nabla f_k^\s\| 
    &\le \left( 16\eta \Lambda_k + \tilde\kappa_{eg} \right) \Delta_k^2 
    \le \left(\eta + \frac{\tilde\kappa_{eg}}{16\Lambda_{\min}}\right) \|\BFGbar_k^0\| \nonumber \\
    &\le \underbrace{\left(\eta + \frac{\tilde\kappa_{eg}}{16\Lambda_{\min}}\right)\left(1+\frac{\kappa_{eg}}{\Lambda_{\min}}\right)}_{:=\bar{\eta}}\|\nabla f_k^0\|.
    \end{align}
    We choose $\kappa_{eg}$ sufficiently small such that
    \[
    \kappa_{eg} \le \min\left\{ \frac{1}{2\Lambda_{\min}}, \, \frac{(1-\eta)\Lambda_{\min}}{4} \right\}.
    \]
    Then $1-\kappa_{eg}\Lambda_{\min} \ge 1/2$, and using
    \(
    \tilde\kappa_{eg} \le \kappa_{eg}\left(16/(1-\kappa_{eg}\Lambda_{\min})\right)
    \)
    from Theorem~\ref{thm:bounded-error}, we obtain
    \[
    \frac{\tilde\kappa_{eg}}{16\Lambda_{\min}}
    \le
    \frac{\kappa_{eg}}{\Lambda_{\min}(1-\kappa_{eg}\Lambda_{\min})}
    \le
    \frac{2\kappa_{eg}}{\Lambda_{\min}}.
    \]
    Substituting this bound yields
    \[
    \bar\eta = \left(\eta + \frac{\tilde\kappa_{eg}}{16\Lambda_{\min}}\right)
    \left(1+\frac{\kappa_{eg}}{\Lambda_{\min}}\right)
    \le
    \left(\eta + \frac{2\kappa_{eg}}{\Lambda_{\min}}\right)
    \left(1+\frac{\kappa_{eg}}{\Lambda_{\min}}\right)
    \le
    \frac{1+\eta}{2}\cdot \frac{5-\eta}{4}.
    \]
    Since $\eta \in (1/4,1)$, it follows that $\bar{\eta} < 1$, which in turn guarantees a reduction in the gradient norm. If the iteration $k$ is unsuccessful, $\|\nabla f_{k+1}^0\| = \|\nabla f_k^0\|$. As a result, we have $\|\nabla f_k^0\| \le \kappa_{u2}$ for all sufficiently large $k$. Since $\lambda_k$ is an increasing sequence, for sufficiently large $k$, we have $\lambda_k^{1/3} \ge 1$, and hence the constant bound $\|\nabla f_k^0\| \le \kappa_{u2}$ implies
    \(
    \|\nabla f_k^0\| \le \kappa_{u2} \lambda_k^{1/3}.
    \)

    \textbf{Case 2: $\Lambda_k^{\max} = \lambda_k$.} 
    When $\Lambda_k \le \lambda_k$, the reduction in the objective function~\eqref{eq:true_fn_rd} yields
    \begin{equation*}
        f^* < f_k^\s \le f_k^0 - \theta^2 \eta \Lambda_k \Delta_k^3 = f_k^0 - \frac{\theta^2 \eta}{64 \sqrt{\Lambda_k}} \|\BFGbar_k^0\|^{3/2} \le f_k^0 - \frac{\theta^2 \eta}{64 \sqrt{\lambda_k}} \|\BFGbar_k^0\|^{3/2}.
    \end{equation*}
    Rearranging this inequality with $f_k^0 \le f^u$, we obtain an upper bound on the estimated gradient
    \begin{equation} \label{eq:Gbar_ub_lambda}
        \|\BFGbar_k^0\| \le \lambda_k^{1/3} \left( \frac{64}{\theta^2 \eta} (f^u - f^*) \right)^{2/3}.
    \end{equation}
    Following the same logic as in Case 1, we apply Lemma~\ref{lem:bounded-error} (with $\alpha=0$) to show that for any $\kappa_{eg} > 0$
    \begin{equation*}
        \|\nabla f_k^\s \| \le \|\nabla f_k^0\| + \|\nabla f_k^\s - \nabla f_k^0\| 
        \le \lambda_k^{1/3} \left( \frac{64}{\theta^2 \eta} (f^u - f^*) \right)^{2/3} + \kappa_{eg} + \frac{\kappa_{Lg}}{4\sqrt{\mu}}.
    \end{equation*}
    For $k \in \mathcal{G}$, the same contraction argument as in Case~1 applies. Therefore, the gradient norm decreases on iterations in $\mathcal{G}$ and remains unchanged on unsuccessful iterations, implying that $\|\nabla f_k^0\| = \mcO(\lambda_k^{1/3})$ for all sufficiently large $k$.
    
    Combining both cases, there exists a finite constant $\kappa_u$ such that $\|\nabla f_k^0\| \le \kappa_u \lambda_k^{1/3}$ holds almost surely for all sufficiently large $k$. Consequently, the event $\{\|\nabla f_k^0\| > \kappa_u \lambda_k^{1/3} \text{ i.o.}\}$ has probability zero, which completes the proof.
\end{proof}

\begin{remark}
    The assumption that the objective function $f(\cdot)$ is bounded above by $f^u$, as required in Lemma~\ref{lem:grad-ub-io}, can be relaxed without affecting the subsequent results. Extending the analysis to functions that are unbounded above is straightforward: the only modification is that the constant $\kappa_u$ becomes a finite positive random variable. Since this does not materially alter the upcoming proof arguments, we proceed without imposing the boundedness-from-above of $f$ assumption in the remainder of the analysis.
\end{remark}

As a direct consequence, the bound on $\Lambda_k$ in Lemma~\ref{lem:reg-ub} can be simplified to the following form.

\begin{corollary} \label{cor:reg-ub}
    Suppose that Assumptions~\ref{assum:smooth-f}--\ref{assum:lipschtizmodelHessian} hold.  
    Then for the iterates $\{\BFX_k\}$ generated by Algorithm~\ref{alg:ASUTRO}, 
    \begin{equation*}
        \mathbb{P} \left( \Lambda_k > \gamma_1 \lambda_k  \ \text{i.o.} \right) = 0.
    \end{equation*}    
\end{corollary}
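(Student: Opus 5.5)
The plan is to combine Lemma~\ref{lem:reg-ub} with Lemma~\ref{lem:grad-ub-io}, showing that for a.e.\ $\omega$ the maximum $\max\{\Lambda_\mcD,\Lambda_\mcG,\mu\|\BFGbar_k^0\|,\lambda_k\}$ equals $\lambda_k$ for all sufficiently large $k$. Fix $\omega$ outside the union of the null sets from Lemma~\ref{lem:reg-ub}, Lemma~\ref{lem:grad-ub-io}, and Lemma~\ref{lem:bounded-error} (with $\alpha=0$). A countable union of null sets is null, so for such $\omega$ there is $K(\omega)$ beyond which all three eventual statements hold simultaneously.

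\textbf{Step 1 (constants).} Since $\lambda_k\to\infty$ and $\Lambda_\mcD,\Lambda_\mcG$ are deterministic constants, $\lambda_k\ge\max\{\Lambda_\mcD,\Lambda_\mcG\}$ for all large $k$.

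\textbf{Step 2 (gradient term).} Bound $\mu\|\BFGbar_k^0\|$. Lemma~\ref{lem:bounded-error} with $\alpha=0$ gives $\|\BFGbar_k^0\|\le\|\nabla f_k^0\|+\kappa_{eg}$ eventually. Lemma~\ref{lem:grad-ub-io} gives $\|\nabla f_k^0\|\le\kappa_u\lambda_k^{1/3}$ eventually. Hence
\[
\mu\|\BFGbar_k^0\|\le\mu(\kappa_u\lambda_k^{1/3}+\kappa_{eg}).
\]
Because $\lambda_k^{1/3}/\lambda_k=\lambda_k^{-2/3}\to0$, the right-hand side is at most $\lambda_k$ for all large $k$.

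\textbf{Step 3 (conclusion).} Combining the two steps, the maximum equals $\lambda_k$ eventually. Lemma~\ref{lem:reg-ub} then yields $\Lambda_k\le\gamma_1\lambda_k$ eventually, so $\{\Lambda_k>\gamma_1\lambda_k\}$ occurs only finitely often on a probability-one set.

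\textbf{Main obstacle.} The delicate point is that Lemma~\ref{lem:grad-ub-io} was stated with $f$ bounded above. I would invoke the subsequent remark, under which $\kappa_u$ becomes a finite random variable $\kappa_u(\omega)$. The argument still works pathwise, since $\mu\kappa_u(\omega)\lambda_k^{-2/3}\to0$ for each fixed $\omega$. A pathwise random threshold is enough here because the conclusion only concerns eventual behavior along each sample path.
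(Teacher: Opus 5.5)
Your proposal is correct and follows essentially the paper's proof: combine Lemma~\ref{lem:reg-ub} with the eventual bound $\|\BFGbar_k^0\|\le\kappa_u\lambda_k^{1/3}+\text{const}$, obtained from Lemma~\ref{lem:grad-ub-io} together with an $\alpha=0$ estimation-error bound, and then use $\lambda_k\to\infty$ to conclude that the maximum is eventually $\lambda_k$. The differences are cosmetic: you cite Lemma~\ref{lem:bounded-error} rather than Theorem~\ref{thm:bounded-error} for the error term, and you state explicitly that the pathwise argument survives a random $\kappa_u(\omega)$.
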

\begin{proof}
    Let $\omega \in \Omega\backslash \Omega_0$ be fixed, where $\Omega_0$ is the largest set of measure 0. We omit the dependence on $\omega$ for simplicity.     
    There exists an index $K$ such that the following hold for all $k \ge K$:
    \begin{itemize}
        \item[(i)] By Lemma~\ref{lem:reg-ub}, $\Lambda_k \le \gamma_1 \max\{\Lambda_\mcD,\Lambda_\mcG,\mu\|\BFGbar_k^0\|, \lambda_k\}$;
        \item[(ii)] By Lemma~\ref{lem:grad-ub-io} and Theorem~\ref{thm:bounded-error}, 
        \(
        \|\BFGbar_k^0\| \le \|\nabla f_k^0\| + \|\BFGbar_k^0-\nabla f_k^0\| \le \kappa_{u} \lambda_k^{1/3} + \tilde\kappa_{eg}.
        \)
    \end{itemize}
    Hence, for all $k \ge K$,
    \(
        \Lambda_k
        \le \gamma_1 \max\{\Lambda_\mcD,\Lambda_\mcG,\mu\big(\kappa_u \lambda_k^{1/3}+\tilde\kappa_{eg}\big),\lambda_k\}.
    \)
    Since $\lambda_k = \mcO((\log k)^{1+\epsilon_\lambda})$, there exists an index $\widetilde K \ge K$ such that, for all $k \ge \widetilde K$, 
    \[
    \max\{\Lambda_\mcD,\Lambda_\mcG,\mu\big(\kappa_u \lambda_k^{1/3}+\tilde\kappa_{eg}\big),\lambda_k\} \le \lambda_k.
    \]
    Therefore, $\Lambda_k \le \gamma_1\lambda_k$ for all $k \ge \widetilde K$, which completes the proof.
\end{proof}

We next establish a key property of the true function values along the trajectory. In particular, iterations in $\mcD$ achieve sufficient function decrease, while iterations in $\mcG$, although defined through gradient-based conditions, do not increase the function value. As a result, the sequence $\{f(\BFX_k)\}$ is eventually non-increasing.

\begin{lemma} \label{lem:monotone_decreasing}
    Suppose Assumptions~\ref{assum:smooth-f}--\ref{assum:lipschtizmodelHessian} hold.  
    The sequence of iterates $\{\BFX_k\}$ generated by Algorithm~\ref{alg:ASUTRO} is almost surely eventually monotone. In particular, it satisfies the following:
    \begin{align}
    \mathbb{P}\!\left( (k \in \mcD) \bigcap (f_k^0 - f_k^{\s} < \theta^2 \eta \Lambda_k \Delta_k^{3}) \text{ i.o.} \right) &= 0, \label{eq:true_fn_rd_D}\\
    \mathbb{P}\!\left( (k \in \mcG) \bigcap (f_k^0 - f_k^{\s} < 0) \text{ i.o.} \right) &= 0. \label{eq:true_fn_rd_G}
    \end{align}
\end{lemma}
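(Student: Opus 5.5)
The plan is to fix $\omega$ outside a null set. On that set, Theorem~\ref{thm:bounded-error} (with $\alpha=3$ for the gradient and function errors at both the center and trial points) and Corollary~\ref{cor:hessian-error} hold for all $k\ge K(\omega)$. Both \eqref{eq:true_fn_rd_D} and \eqref{eq:true_fn_rd_G} then follow by showing that, for $k\ge K(\omega)$, the bad event inside each probability cannot occur.

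\textbf{Part \eqref{eq:true_fn_rd_D}.} This essentially repeats the computation already done in the proof of Lemma~\ref{lem:grad-ub-io}.
\begin{itemize}
\item For $k\in\mcD$ we have $\rho_k\ge\eta$ and $\|\BFS_k\|\ge\theta\Delta_k$. Lemma~\ref{lem:model-reduction} then gives $\Fbar_k^0-\Fbar_k^\s\ge 2\theta^2\eta\Lambda_k\Delta_k^3$.
\item Writing $f_k^0-f_k^\s=(\Fbar_k^0-\Fbar_k^\s)+(\Ebar_k^\s-\Ebar_k^0)$ and using $|\Ebar_k^i|\le\tilde\kappa_{ef}\Delta_k^3$ for $i\in\{0,\s\}$, we get $f_k^0-f_k^\s\ge 2\theta^2\eta\Lambda_k\Delta_k^3-2\tilde\kappa_{ef}\Delta_k^3$.
\item Choose $\kappa_{ef}$ (hence $\tilde\kappa_{ef}$) as in that proof, so that $\tilde\kappa_{ef}\le\theta^2\eta\Lambda_{\min}/2\le \theta^2\eta\Lambda_k/2$.
\item This yields $f_k^0-f_k^\s\ge\theta^2\eta\Lambda_k\Delta_k^3$ for all $k\ge K(\omega)$, so the event in \eqref{eq:true_fn_rd_D} happens only finitely often.
\end{itemize}

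\textbf{Part \eqref{eq:true_fn_rd_G}.} This part uses only true function values, so function-estimation errors do not enter.
\begin{itemize}
\item Start from the Taylor bound \eqref{eq:function-reduction-actual} and insert the model:
\[
f_k^\s-f_k^0\le -\big(M_k(\boldsymbol 0)-M_k(\BFS_k)\big)+\|\BFEbar_k^{g,0}\|\|\BFS_k\|+\tfrac12\|\nabla^2f_k^0-\sfH_k\|\|\BFS_k\|^2+\tfrac{\kappa_{L\sfH}}{6}\|\BFS_k\|^3 .
\]
\item Lemma~\ref{lem:model-reduction} bounds the model-reduction term by $-2\Lambda_k\Delta_k\|\BFS_k\|^2$.
\item The Hessian error term is at most $\tfrac{\kappa_{e\sfH}}2\Delta_k\|\BFS_k\|^2$.
\item Since $\|\BFS_k\|\le\Delta_k$, the cubic term is at most $\tfrac{\kappa_{L\sfH}}6\Delta_k\|\BFS_k\|^2$.
\end{itemize}
All of these are of order $\Delta_k\|\BFS_k\|^2$. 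The delicate term is the gradient error $\tilde\kappa_{eg}\Delta_k^3\|\BFS_k\|$, which is only linear in $\|\BFS_k\|$. An interior step in $\mcG$ may be much shorter than $\Delta_k$, so this term need not be dominated directly. This is the main obstacle, and it is exactly why the cubic accuracy $\alpha=3$ is needed.

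To handle it, invoke the step lower bound \eqref{eq:stepsize_lb}, $\|\BFS_k\|\ge 16\Lambda_k\Delta_k^2/(\kappa_\sfH+4\Lambda_k\Delta_k)$, using $\sqrt{\Lambda_k\|\BFGbar_k^0\|}=4\Lambda_k\Delta_k$. Rearranged, this gives
\[
\Delta_k^3\le \Delta_k\|\BFS_k\|\,\frac{\kappa_\sfH+4\Lambda_k\Delta_k}{16\Lambda_k},
\]
so the gradient error term is bounded by $\tilde\kappa_{eg}\Delta_k\|\BFS_k\|^2\big(\tfrac{\kappa_\sfH}{16\Lambda_k}+\tfrac{\Delta_k}{4}\big)$. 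Collecting everything,
\[
f_k^0-f_k^\s\ge \Delta_k\|\BFS_k\|^2\Big(2\Lambda_k-\tfrac{\kappa_{e\sfH}}2-\tfrac{\kappa_{L\sfH}}6-\tfrac{\tilde\kappa_{eg}\kappa_\sfH}{16\Lambda_k}-\tfrac{\tilde\kappa_{eg}\Delta_k}{4}\Big).
\]
The membership $k\in\mcG$ controls both remaining quantities:
\begin{itemize}
\item $\Lambda_k\ge\mu\|\BFGbar_k^0\|=16\mu\Lambda_k\Delta_k^2$, so $\Delta_k\le(16\mu)^{-1/2}$ is uniformly bounded;
\item $\Lambda_k\ge\lambda_k\to\infty$.
\end{itemize}
Hence the bracket is positive for all large $k$, with a threshold independent of $\omega$ beyond $K(\omega)$. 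Therefore $f_k^0-f_k^\s\ge 0$, and \eqref{eq:true_fn_rd_G} follows.

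Finally, combining the two parts with the fact that unsuccessful iterations leave $\BFX_k$ unchanged gives eventual monotonicity of $\{f(\BFX_k)\}$ almost surely.
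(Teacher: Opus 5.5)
You follow the paper's proof closely. Part \eqref{eq:true_fn_rd_D} is the same as the paper's. In Part \eqref{eq:true_fn_rd_G} you use the same Taylor bound, the same $\alpha=3$ step-length argument and the same use of $\Lambda_k\ge\max\{\mu\|\BFGbar_k^0\|,\lambda_k\}$. However, one step fails as written: you apply \eqref{eq:stepsize_lb} to every $k\in\mcG$.

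From \eqref{eq:opt-3}, that bound reads $\|\BFS_k\|\ge\|\BFGbar_k^0\|/(\|\sfH_k\|+4\Lambda_k\Delta_k+\ell_k)$, and dropping $\ell_k$ is legitimate only when $\ell_k=0$. For a boundary solution ($\ell_k>0$, so $\|\BFS_k\|=\Delta_k$), the inequality $\Delta_k\ge 16\Lambda_k\Delta_k^2/(\kappa_{\sfH}+4\Lambda_k\Delta_k)$ is equivalent to $12\Lambda_k\Delta_k\le\kappa_{\sfH}$. Nothing in the algorithm guarantees this. It fails, for instance, whenever $\Lambda_k\Delta_k=\frac14\sqrt{\Lambda_k\|\BFGbar_k^0\|}$ is large, and $\Lambda_k\ge\lambda_k\to\infty$ on $\mcG$.

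In fact, \eqref{eq:opt-3} with $\ell_k=0$ and $\|\BFS_k\|\le\Delta_k$ gives $16\Lambda_k\Delta_k^2\le(\|\sfH_k\|+4\Lambda_k\Delta_k)\Delta_k$, that is, $\|\sfH_k\|\ge 12\Lambda_k\Delta_k$. So whenever $12\Lambda_k\Delta_k>\kappa_{\sfH}$, every step is a boundary step. For all such steps your rearranged inequality $\Delta_k^3\le\Delta_k\|\BFS_k\|(\kappa_{\sfH}+4\Lambda_k\Delta_k)/(16\Lambda_k)$ is false.

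The repair is easy, and it is why the paper splits on $\ell_k$. If $\ell_k>0$, then $\|\BFS_k\|=\Delta_k$, and the gradient-error term is $\tilde\kappa_{eg}\Delta_k^4=\tilde\kappa_{eg}\Delta_k\cdot\Delta_k\|\BFS_k\|^2\le\tilde\kappa_{eg}(16\mu)^{-1/2}\Delta_k\|\BFS_k\|^2$. (The paper simply takes $\alpha=2$ in this case.) The bracket is then $2\Lambda_k$ minus a constant, which is eventually positive. If $\ell_k=0$, your argument goes through verbatim.

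Equivalently, you can replace your use of \eqref{eq:stepsize_lb} with $\Delta_k^2/\|\BFS_k\|\le\max\{\Delta_k,(\kappa_{\sfH}+4\Lambda_k\Delta_k)/(16\Lambda_k)\}$, which is valid in both cases. Combined with your bound $\Delta_k\le(16\mu)^{-1/2}$ on $\mcG$, this completes the proof and makes it match the paper's.
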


\begin{proof}
    Let $\omega \in \Omega$ denote a realization of Algorithm~\ref{alg:ASUTRO} with a nonzero probability; we omit $\omega$ for simplicity. We obtain from Taylor's expansion and Lemma~\ref{lem:model-reduction}
    \begin{align*}
        f_k^\s - f_k^0 &\le \BFS_k^\top \nabla f_k^0 + \frac{1}{2} \BFS_k^\top \nabla^2 f_k^0 \BFS_k + \frac{\kappa_{L\sfH}}{6} \|\BFS_k\|^3  \\
        &= \BFS_k^\top (\nabla f_k^0 - \BFGbar_k^0) + \frac{1}{2} \BFS_k^\top (\nabla^2 f_k^0 - \sfH_k) \BFS_k + \frac{\kappa_{L\sfH}}{6} \|\BFS_k\|^3 + \left( \BFS_k^\top \BFGbar_k^0 + \frac{1}{2} \BFS_k^\top \sfH_k \BFS_k \right) \\
        &\le \BFS_k^\top \BFEbar_k^{g,0} + \frac{1}{2} \BFS_k^\top (\nabla^2 f_k^0 - \sfH_k) \BFS_k + \frac{\kappa_{L\sfH}}{6} \|\BFS_k\|^3 - \frac{1}{2} \sqrt{\Lambda_k \|\BFGbar_k^0\|} \|\BFS_k\|^2.
    \end{align*}
    Applying Theorem~\ref{thm:bounded-error} and Corollary~\ref{cor:hessian-error} and substituting $\sqrt{\Lambda_k \|\BFGbar_k^0\|} = 4\Lambda_k \Delta_k$, we have for all sufficiently large $k$,
    \begin{equation} \label{eq:true-reduction}
        f_k^\s-f_k^0\le \|\BFS_k\|^2\Delta_k\left(\tilde\kappa_{eg}\frac{\Delta_k^{\alpha-1}}{\|\BFS_k\|}+\frac{\kappa_{e\sfH}}{2}+\frac{\kappa_{L\sfH}}{6}-2\Lambda_k\right).
    \end{equation}
    To complete the proof, we consider three cases depending on whether $k \in \mcD$ or $k \in \mcG$, and, in the latter case, whether $\ell_k$ is positive or zero.
    
    \textbf{Case 1: $k \in \mcD$.} 
    Following the same argument as in the proof of Lemma~\ref{lem:grad-ub-io}, which combines the success condition $\rho_k \ge \eta$, the model reduction bound, and Theorem~\ref{thm:bounded-error}, for sufficiently large $k$, we obtain (see \eqref{eq:true_fn_rd})
    \(
    f_k^0 - f_k^\s \ge \theta^2 \eta \Lambda_k \Delta_k^3.
    \)
    This establishes~\eqref{eq:true_fn_rd_D}.
    
    \textbf{Case 2: $k \in \mcG$ and $\ell_k > 0$.} 
    From \eqref{eq:true-reduction} with $\alpha = 2$ and $\|\BFS_k\| = \Delta_k$, we have 
    \begin{equation*}
        f_k^\s-f_k^0\le \Delta_k^3\left(\tilde\kappa_{eg}+\frac{\kappa_{e\sfH}}{2}+\frac{\kappa_{L\sfH}}{6}-2\Lambda_k\right).
    \end{equation*}
    As a result, when $\Lambda_k \ge \frac{\tilde\kappa_{eg}}{2} + \frac{\kappa_{e\sfH}}{4}+\frac{\kappa_{L\sfH}}{12}$, we have $f_k^0 - f_k^\s \ge 0$. 
    
    \textbf{Case 3: $k \in \mcG$ and $\ell_k = 0$.}
    We first establish a lower bound for the step-size. From the optimality condition \eqref{eq:opt-3}, Assumption~\ref{assum:lipschtizmodelHessian}, and $\|\BFGbar_k^0\| = 16 \Lambda_k \Delta_k^2$, we have:
    \begin{equation} \label{eq:Sk_lowerbound}
        \| \BFS_k \| \ge \frac{\|\BFGbar_k^0\|}{\|\sfH_k\| + \sqrt{\Lambda_k \|\BFGbar_k^0\|} + \ell_k} \ge \frac{16 \Lambda_k \Delta_k^2}{\kappa_{\sfH} + 4 \Lambda_k \Delta_k}.
    \end{equation}
    From~\eqref{eq:Sk_lowerbound} and the fact that $\Lambda_k \ge \mu \|\BFGbar_k^0\|$ in $\mcG$, we have
    \begin{equation*}
        \frac{\Delta_k^2}{\|\BFS_k\|} \le \frac{\kappa_{\sfH} + \sqrt{\Lambda_k \|\BFGbar_k^0\|}}{16 \Lambda_k} \le \frac{\kappa_{\sfH}}{16 \Lambda_{\min}} + \frac{1}{16 \mu}.
    \end{equation*}
    Substituting this relationship back into the function change bound, we obtain:
    \begin{equation*}
        f_k^\s - f_k^0 \le \left( 
        \underbrace{\frac{\tilde\kappa_{eg}}{16}
        \left( \frac{\kappa_{\sfH}}{\Lambda_{\min}} + \frac{1}{\mu} \right) + \frac{\kappa_{e\sfH}}{2} + \frac{\kappa_{L\sfH}}{6}}_{:=\kappa_{e}} - 2\Lambda_k \right) \|\BFS_k\|^2 \Delta_k.
    \end{equation*}
    As a result, when $\Lambda_k \ge \kappa_{e}/2$, we have $f_k^0 - f_k^\s \ge 0$. 
    For $k \in \mcG$, since $\Lambda_k \ge \lambda_k$ and $\lambda_k \to \infty$, 
    it follows that $\Lambda_k \ge \kappa_e/2$ for all sufficiently large $k$.
    Combining Cases~2 and~3, we establish~\eqref{eq:true_fn_rd_G}.
\end{proof}

A key step in bounding the cardinality of $\mcG$ is to characterize the behavior of the true gradient along these iterations. The following lemma establishes a contraction property of the true gradient norm that, together with Lemma~\ref{lem:grad-ub-io}, enables a bound on the number of iterations in $\mcG$.

\begin{lemma} \label{lem:true-gradient-contraction} 
    Suppose Assumptions~\ref{assum:smooth-f}--\ref{assum:lipschtizmodelHessian} hold. For iterates $\{\BFX_k\}$  generated by Algorithm~\ref{alg:ASUTRO}, there exists a constant $\bar{\eta} \in (0, 1)$ such that
    \begin{equation*}
    \mathbb{P} \left( (k \in \mcG) \bigcap (\|\nabla f_k^\s\| > \bar{\eta} \|\nabla f_k^0\|) \text{ i.o.} \right) = 0.
    \end{equation*}
\end{lemma}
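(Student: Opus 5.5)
We follow the contraction argument already sketched inside the proof of Lemma~\ref{lem:grad-ub-io} (the part leading to \eqref{eq:true-gradient-contraction}), but now for every $k\in\mcG$ rather than only within a case split. The plan is to fix $\omega$ outside a null set and choose $K(\omega)$ so that for all $k\ge K(\omega)$ two bounds hold simultaneously. The first is the trial-point bound of Theorem~\ref{thm:bounded-error} with $i=\s$ and $\alpha=2$, namely $\|\BFGbar_k^\s-\nabla f_k^\s\|\le\tilde\kappa_{eg}\Delta_k^2$. The second is the center-point bound of Lemma~\ref{lem:bounded-error} with $\alpha=0$, or more usefully with $\alpha=2$, giving $\|\BFEbar_k^{g,0}\|\le\kappa_{eg}\|\nabla f_k^0\|/\Lambda_k\le(\kappa_{eg}/\Lambda_{\min})\|\nabla f_k^0\|$. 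Since each of these fails only finitely often a.s., their intersection holds eventually a.s. It then suffices to show, deterministically on that event, that $k\in\mcG$ implies $\|\nabla f_k^\s\|\le\bar\eta\|\nabla f_k^0\|$.

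The chain is as follows. For $k\in\mcG$ we have $\|\BFGbar_k^\s\|\le\eta\|\BFGbar_k^0\|=16\eta\Lambda_k\Delta_k^2$, so
\[
\|\nabla f_k^\s\|\le\|\BFGbar_k^\s\|+\tilde\kappa_{eg}\Delta_k^2\le\Bigl(\eta+\frac{\tilde\kappa_{eg}}{16\Lambda_{\min}}\Bigr)\|\BFGbar_k^0\|,
\]
using $\Delta_k^2=\|\BFGbar_k^0\|/(16\Lambda_k)$ and $\Lambda_k\ge\Lambda_{\min}$. Next, $\|\BFGbar_k^0\|\le\|\nabla f_k^0\|+\|\BFEbar_k^{g,0}\|\le(1+\kappa_{eg}/\Lambda_{\min})\|\nabla f_k^0\|$. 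Combining these gives $\|\nabla f_k^\s\|\le\bar\eta\|\nabla f_k^0\|$ with $\bar\eta=(\eta+\tilde\kappa_{eg}/(16\Lambda_{\min}))(1+\kappa_{eg}/\Lambda_{\min})$.

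The remaining task, and the main point needing care, is to make $\bar\eta<1$ by the choice of the free constant $\kappa_{eg}$. We would take $\kappa_{eg}\le\min\{(2\Lambda_{\min})^{-1},(1-\eta)\Lambda_{\min}/4\}$, so that $1-\kappa_{eg}\Lambda_{\min}\ge 1/2$. Theorem~\ref{thm:bounded-error} then gives $\tilde\kappa_{eg}\le 32\kappa_{eg}$, hence $\tilde\kappa_{eg}/(16\Lambda_{\min})\le 2\kappa_{eg}/\Lambda_{\min}\le(1-\eta)/2$, and therefore $\bar\eta\le\frac{1+\eta}{2}\cdot\frac{5-\eta}{4}$. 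A quick check shows $(1+\eta)(5-\eta)<8$ for $\eta<1$ (the expression equals $8$ at $\eta=1$ and is increasing on $(1/4,1)$), so $\bar\eta\in(0,1)$. The constant $\bar\eta$ is independent of $k$ and $\omega$.

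One subtlety to address explicitly is that the constants in Lemma~\ref{lem:bounded-error} and Theorem~\ref{thm:bounded-error} hold for any fixed $\kappa_{eg}$, so fixing this small value before invoking them is legitimate. Then the event in the statement is contained in the union of the two failure events, each of which occurs only finitely often with probability one. This yields the claim.
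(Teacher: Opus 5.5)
Your proposal is correct and is essentially the paper's own proof: the paper reruns the chain from Lemma~\ref{lem:grad-ub-io} leading to \eqref{eq:true-gradient-contraction}, using Theorem~\ref{thm:bounded-error} ($i=\s$, $\alpha=2$) at the trial point and Lemma~\ref{lem:bounded-error} ($\alpha=2$) at the center. It also makes the same choice $\kappa_{eg}\le\min\{(2\Lambda_{\min})^{-1},(1-\eta)\Lambda_{\min}/4\}$, giving $\bar\eta\le\frac{1+\eta}{2}\cdot\frac{5-\eta}{4}<1$. Your bound $\tilde\kappa_{eg}\le 32\kappa_{eg}$ also survives under the factor $1-\kappa_{eg}/\Lambda_{\min}$ that the proof of Theorem~\ref{thm:bounded-error} actually produces (its statement writes $1-\kappa_{eg}\Lambda_{\min}$ instead), since your choice gives $\kappa_{eg}/\Lambda_{\min}\le(1-\eta)/4<1/2$.
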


\begin{proof}
    Let $\omega \in \Omega\backslash \Omega_0$ be fixed, where $\Omega_0$ is the largest set of measure 0; we omit $\omega$ for simplicity.
    For $k \in \mcG$, the algorithm ensures that 
    \(
    \|\BFGbar_k^\s\| \le \eta \|\BFGbar_k^0\|.
    \)
    Following the same argument used in the proof of Lemma~\ref{lem:grad-ub-io} (see~\eqref{eq:true-gradient-contraction}), we obtain for all sufficiently large $k$ that
    \(
    \|\nabla f_k^\s\|
    \le
     \bar{\eta}
    \|\nabla f_k^0\|,
    \)
    which completes the proof.
\end{proof}

\begin{remark} \label{rmk:lambda-growth}
The following growth property of the sequence $\{\lambda_k\}$ will be used repeatedly in the analysis. 
From the definition of $\{\lambda_k\}$ in Algorithm~\ref{alg:ASUTRO}, we choose 
\(
\lambda_k = \mcO\big((\log k)^{1+\epsilon_\lambda}\big).
\)
In particular, there exists a constant $\kappa_\lambda > 0$ such that \( \lambda_k \le \kappa_\lambda (\log k)^{1+\epsilon_\lambda} \) for all $k \in \mbN$.
\end{remark}

We now turn to bounding the number of iterations in $\mcD$ and $\mcG$ up to $T_{\epsilon}$, using the results developed above.

\begin{lemma} \label{lem:D-eps-count}
    Suppose Assumptions~\ref{assum:smooth-f}--\ref{assum:lipschtizmodelHessian} hold. 
    There exists a positive random variable $M_{\mcD}$ such that for any deterministic sequence $\{\epsilon_{\ell}\}$ with $\epsilon_{\ell} \downarrow 0$,
    \begin{equation*}
        \mathbb{P}\!\left( |\mcD_{\epsilon_{\ell}}| > M_{\mcD}\epsilon_{\ell}^{-3/2}\left(\log T_{\epsilon_{\ell}} \right)^{\frac{1+\epsilon_\lambda}{2}} \text{ i.o.} \right) = 0,
    \end{equation*}
    where $\mcD_{\epsilon_{\ell}} := \mcD \cap \{k < T_{\epsilon_{\ell}}\}$ and $T_{\epsilon_{\ell}}$ is the first iteration index such that $\|\nabla f_k^0\| \le \epsilon_{\ell}$.
\end{lemma}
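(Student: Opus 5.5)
Fix $\omega\in\Omega\setminus\Omega_0$ and work pathwise, in the style of the proofs of Lemma~\ref{lem:grad-ub-io} and Corollary~\ref{cor:reg-ub}; omit $\omega$ throughout. The argument is a telescoping one on the true function values. Lemma~\ref{lem:monotone_decreasing} provides an index $K_1(\omega)$ with the following properties for all $k\ge K_1$. Every $k\in\mcD$ satisfies $f_k^0-f_{k+1}^0=f_k^0-f_k^\s\ge \theta^2\eta\Lambda_k\Delta_k^3$. Every $k\in\mcG$ satisfies $f_k^0-f_{k+1}^0\ge 0$. Unsuccessful iterations leave $f$ unchanged. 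Hence $\{f_k^0\}_{k\ge K_1}$ is non-increasing and bounded below by $f^*$. Summing over $k\in\mcD$ with $K_1\le k<T_{\epsilon_\ell}$ gives
\[
\sum_{k\in\mcD,\,K_1\le k<T_{\epsilon_\ell}} \theta^2\eta\Lambda_k\Delta_k^3\le f_{K_1}^0-f^*.
\]
The finitely many indices $k<K_1$ contribute at most $K_1$, a random constant.

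Next, bound each summand from below in terms of $\epsilon_\ell$. Substituting $\Delta_k^2=\|\BFGbar_k^0\|/(16\Lambda_k)$ gives $\Lambda_k\Delta_k^3=\|\BFGbar_k^0\|^{3/2}/(64\sqrt{\Lambda_k})$. For $k<T_{\epsilon_\ell}$ we have $\|\nabla f_k^0\|>\epsilon_\ell$. The reverse-triangle bound \eqref{eq:g-lowerbound-final}, obtained from Lemma~\ref{lem:bounded-error} with $\alpha=2$ and small $\kappa_{eg}$, gives $\|\BFGbar_k^0\|\ge \tfrac12\|\nabla f_k^0\|>\epsilon_\ell/2$ for $k\ge K_2(\omega)$. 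Corollary~\ref{cor:reg-ub} gives $\Lambda_k\le\gamma_1\lambda_k$ for $k\ge K_3(\omega)$. By Remark~\ref{rmk:lambda-growth}, $\lambda_k\le\kappa_\lambda(\log k)^{1+\epsilon_\lambda}\le \kappa_\lambda(\log T_{\epsilon_\ell})^{1+\epsilon_\lambda}$ because $k<T_{\epsilon_\ell}$ and $\lambda_k$ is nondecreasing. Therefore each $k\in\mcD_{\epsilon_\ell}$ with $k\ge K:=\max(K_1,K_2,K_3)$ satisfies
\[
f_k^0-f_{k+1}^0\ge \frac{\theta^2\eta\,\epsilon_\ell^{3/2}}{64\cdot 2^{3/2}\sqrt{\gamma_1\kappa_\lambda}}\,(\log T_{\epsilon_\ell})^{-\frac{1+\epsilon_\lambda}{2}}.
\]
Dividing the telescoped bound by this quantity yields
\[
|\mcD_{\epsilon_\ell}|\le K+\frac{64\cdot 2^{3/2}\sqrt{\gamma_1\kappa_\lambda}\,(f_K^0-f^*)}{\theta^2\eta}\,\epsilon_\ell^{-3/2}(\log T_{\epsilon_\ell})^{\frac{1+\epsilon_\lambda}{2}}.
\]

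To finish, define $M_\mcD(\omega)$ as, say, twice the constant in front. Absorb the additive $K(\omega)$ using $\epsilon_\ell^{-3/2}\to\infty$, so the bound holds for all large $\ell$ and the "i.o." event fails on $\Omega\setminus\Omega_0$.

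The main obstacle is making $M_\mcD$ independent of $\ell$ and the sequence, which requires care with $K(\omega)$. I would define $M_\mcD(\omega):=K(\omega)+C(f_K^0-f^*)$ with $\epsilon_\ell\le1$ eventually, so that $K\le K\epsilon_\ell^{-3/2}$. Two degenerate situations also need handling. If $T_{\epsilon_\ell}\le K$, then $|\mcD_{\epsilon_\ell}|\le K$ trivially. If $T_{\epsilon_\ell}=\infty$, the bound is vacuous. A further subtlety is that $\log T_{\epsilon_\ell}$ must be at least $1$, which holds for large $\ell$ since $T_{\epsilon_\ell}\to\infty$ when $\epsilon_\ell\downarrow0$, unless a stationary point is hit. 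In that edge case $T$ is eventually constant, and I would treat it separately by the trivial bound $|\mcD_{\epsilon_\ell}|\le T_{\epsilon_\ell}$.
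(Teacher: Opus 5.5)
Your proposal is correct and follows the paper's proof. You telescope the eventual true decrease $f_k^0-f_k^{\s}\ge\theta^2\eta\Lambda_k\Delta_k^3$ over $\mcD$ beyond a random index, lower-bound each term by a constant times $\epsilon_\ell^{3/2}/\sqrt{\lambda_{T_{\epsilon_\ell}}}$ via Corollary~\ref{cor:reg-ub} and Remark~\ref{rmk:lambda-growth}, and fold the first $K$ iterations into a random $M_{\mcD}$. Your bookkeeping is tidier than the paper's on two points: the factor $\tfrac12$ between $\|\BFGbar_k^0\|$ and $\|\nabla f_k^0\|$, and the choice $M_{\mcD}=K+C(f_K^0-f^*)$ using $\epsilon_\ell\le 1$ and $\log T_{\epsilon_\ell}\ge 1$.

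One caveat concerns your degenerate-case remark. The fallback $|\mcD_{\epsilon_\ell}|\le T_{\epsilon_\ell}$ does not rescue the case $T_{\epsilon_\ell}\equiv 1$ (that is, $\nabla f(\BFX_1)=\boldsymbol{0}$ with $0\in\mcD$), because $(\log 1)^{(1+\epsilon_\lambda)/2}=0$ makes the stated bound itself false there. The paper excludes this case only implicitly, by taking $T_\epsilon>K$ for all small $\epsilon$, so it is a defect of the statement rather than of your argument.
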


\begin{proof}
    Let $\omega \in \Omega$ denote a realization of Algorithm~\ref{alg:ASUTRO} with nonzero probability; we omit the dependence on $\omega$ for simplicity. Consider $k \in \mcD$. From Lemma~\ref{lem:monotone_decreasing} and Corollary~\ref{cor:reg-ub}, there exists an index $K$ such that for all $k \ge K$, we have
    \begin{align*}
        f_k^0 - f_k^\s &\ge \theta^2 \eta \Lambda_k \Delta_k^3 \ge \frac{\theta^2 \eta}{64 \sqrt{\Lambda_k}} \|\BFGbar_k^0\|^{3/2} 
        \ge \underbrace{\frac{\theta^2 \eta}{64 \sqrt{\gamma_1}} \left( 1 - \frac{\kappa_{eg}}{\Lambda_{\min}^{3/2}} \right)}_{:=\kappa_{\mcD}} \frac{\|\nabla f_k^0\|^{3/2}}{\lambda_k}.
    \end{align*}
    Fix $\epsilon_0 > 0$ sufficiently small such that for all $\epsilon < \epsilon_0$, $T_\epsilon > K.$
    For $k < T_\epsilon$, we have $\|\nabla f_k^0\| > \epsilon$. Furthermore, since $\lambda_k$ is not decreasing, $\lambda_k \le \lambda_{T_\epsilon}$ for all $k < T_\epsilon$. Consequently, for $k \in \mcD_\epsilon \cap \{k \ge K\}$, 
    \begin{equation} \label{eq:epsilon-descent}
        f_k^0 - f_k^\s \ge \kappa_{\mcD} \frac{\epsilon^{3/2}}{\sqrt{\lambda_{T_\epsilon}}}.
    \end{equation}
    Summing \eqref{eq:epsilon-descent} over $k \in \mathcal{D}_\epsilon \cap \{k \ge K\}$ and applying the telescoping sum of the function values, we have
    \begin{equation} \label{eq:mcDsumK}
        \frac{\kappa_{\mcD} \epsilon^{3/2}}{\sqrt{\lambda_{T_\epsilon}}} |\mathcal{D}_\epsilon \cap \{k \ge K\}| \le \sum_{k \in \mathcal{D}_\epsilon \cap \{k \ge K\}} (f_k^0 - f_k^\s) \le f_K^0 - f^*.
    \end{equation}
    Furthermore, since the single-step reduction $f_k^0 - f_k^\s$ cannot exceed the total available descent $f_K^0 - f^*$, the term $\epsilon^{3/2}/\sqrt{\lambda_{T_\epsilon}}$ is inherently bounded by $(f_K^0 - f^*) / \kappa_{\mcD}$. 
    Rearranging~\eqref{eq:mcDsumK} and accounting for the first $K$ iterations, we obtain
    \begin{equation} \label{eq:mcDsum}
        |\mathcal{D}_\epsilon| \frac{\epsilon^{3/2}}{\sqrt{\lambda_{T_\epsilon}}} \le \frac{f_K^0 - f^*}{\kappa_{\mcD}} + K \frac{\epsilon^{3/2}}{\sqrt{\lambda_{T_\epsilon}}} \le (1+K) \frac{f_K^0 - f^*}{\kappa_{\mcD}}.
    \end{equation}
    By Remark~\ref{rmk:lambda-growth}, we obtain 
    \(
    \sqrt{\lambda_{T_\epsilon}} \le \sqrt{\kappa_{\lambda}} (\log T_\epsilon)^{\delta},
    \)
    where $\delta = (1+\epsilon_\lambda)/2$. Substituting this into \eqref{eq:mcDsum} yields
    \[
    |\mcD_\epsilon|
    \le
    \underbrace{\left((1+K) \frac{f_K^0 - f^*}{\kappa_{\mcD}}\right)}_{:=M_{\mcD}}\epsilon^{-3/2}
    (\log T_\epsilon)^{\delta}.
    \]
    This completes the proof.
\end{proof}

\begin{lemma} \label{lem:G-eps-count}
    Suppose Assumptions~\ref{assum:smooth-f}--\ref{assum:lipschtizmodelHessian} hold. 
    There exists a positive random variable $M_{\mcG}$ such that for any deterministic sequence $\{\epsilon_{\ell}\}$ with $\epsilon_{\ell} \downarrow 0$,
    \begin{equation*}
    \mbP\left(|\mcG_{\epsilon_{\ell}}| > M_{\mcG} \left( \log(1/\epsilon_{\ell}) + \log \log T_{\epsilon_{\ell}} \right) |\mcD_{\epsilon_{\ell}}| \text{ i.o.}  \right)=0,
    \end{equation*}
    where $\mcG_{\epsilon_{\ell}} := \mcG \cap \{k < T_{\epsilon_{\ell}}\}$ and $T_{\epsilon_{\ell}} := \min \{k : \|\nabla f_k^0\| \le \epsilon_{\ell}\}$.
\end{lemma}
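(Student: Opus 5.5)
Fix $\omega\in\Omega\backslash\Omega_0$ and work pathwise. By Lemmas~\ref{lem:monotone_decreasing}, \ref{lem:true-gradient-contraction}, \ref{lem:grad-ub-io}, Corollary~\ref{cor:reg-ub} and Theorem~\ref{thm:bounded-error}, there is an index $K$ beyond which the following hold simultaneously:
\begin{itemize}
\item[(i)] every $k\in\mcG$ satisfies $\|\nabla f_{k+1}^0\|\le\bar\eta\|\nabla f_k^0\|$ and $f_{k+1}^0\le f_k^0$;
\item[(ii)] unsuccessful iterations leave $\BFX_k$, hence $\nabla f_k^0$, unchanged;
\item[(iii)] $\|\nabla f_k^0\|\le\kappa_u\lambda_k^{1/3}$.
\end{itemize}
The plan is a counting argument on the gradient norm. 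After $K$, the only iterations that can increase $\|\nabla f_k^0\|$ are those in $\mcD$. Iterations in $\mcG$ shrink it geometrically, and unsuccessful ones are neutral.

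Partition the index range $\{K,\dots,T_{\epsilon}-1\}$ into maximal blocks, each starting at $K$ or right after an iteration in $\mcD$. Within a block, every successful iteration lies in $\mcG$. Let $j$ be the start of a block and let $m$ be the number of $\mcG$-iterations in the block before $T_\epsilon$. Then
\[
\epsilon<\|\nabla f^0_{\text{last}}\|\le\bar\eta^{\,m}\|\nabla f_j^0\|\le\bar\eta^{\,m}\kappa_u\lambda_j^{1/3}\le\bar\eta^{\,m}\kappa_u\lambda_{T_\epsilon}^{1/3}.
\]
Here the bound on $\|\nabla f_j^0\|$ uses (iii), and the last step uses that $\lambda_k$ is nondecreasing. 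Hence
\[
m\le\frac{\log(\kappa_u/\epsilon)+\tfrac13\log\lambda_{T_\epsilon}}{\log(1/\bar\eta)}+1.
\]
By Remark~\ref{rmk:lambda-growth}, $\log\lambda_{T_\epsilon}\le\log\kappa_\lambda+(1+\epsilon_\lambda)\log\log T_\epsilon$. This gives $m\le c\,(\log(1/\epsilon)+\log\log T_\epsilon)$ once $\epsilon$ is small, with $c$ absorbing $\kappa_u,\kappa_\lambda,\bar\eta$.

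The number of blocks is at most $|\mcD_\epsilon|+1$. Summing over blocks, and adding at most $K$ for the $\mcG$-iterations before $K$, gives
\[
|\mcG_\epsilon|\le K+c\,(|\mcD_\epsilon|+1)(\log(1/\epsilon)+\log\log T_\epsilon).
\]
For a sequence $\epsilon_\ell\downarrow0$ we have $T_{\epsilon_\ell}>K$ eventually, and $|\mcD_{\epsilon_\ell}|\ge1$ eventually. The latter holds because $\mcD$ must be infinite: otherwise (i) forces $\|\nabla f_k^0\|\to0$ geometrically along $\mcG$, which contradicts Assumption~\ref{assum:grad-dominates}, or else a trivial bound applies if $\mcG$ is finite. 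When $|\mcD_\epsilon|\ge1$, the constants $K$ and $+1$ can be absorbed, giving $|\mcG_{\epsilon_\ell}|\le M_\mcG(\log(1/\epsilon_\ell)+\log\log T_{\epsilon_\ell})|\mcD_{\epsilon_\ell}|$ with a random $M_\mcG$ depending on $K(\omega)$ and $\kappa_u$. Since the bound holds for all large $\ell$ on a full-measure set, the stated i.o.\ probability is zero.

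The main obstacle is the bookkeeping at block boundaries. A $\mcD$-iteration can increase the gradient norm, so each block must be restarted from the global cap $\kappa_u\lambda_{T_\epsilon}^{1/3}$ rather than from the previous value. This is where the $\log\log T_\epsilon$ term comes from. A second delicate point is the edge case $|\mcD_{\epsilon_\ell}|=0$. If the argument above does not rule it out cleanly, I would restate the bound with $|\mcD_{\epsilon_\ell}|+1$ and absorb the difference into $M_\mcG$.
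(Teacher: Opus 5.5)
Your proposal is correct and follows the paper's proof. You split the iterations before $T_\epsilon$ into blocks delimited by $\mcD$-indices, bound each block's $\mcG$-count using the $\bar\eta$-contraction together with the cap $\|\nabla f_k^0\|\le\kappa_u\lambda_{T_\epsilon}^{1/3}$ and the floor $\epsilon$, convert $\log\lambda_{T_\epsilon}$ into $\log\log T_\epsilon$ via Remark~\ref{rmk:lambda-growth}, and absorb $K$ into $M_\mcG$. That is exactly the paper's argument.

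Your extra bookkeeping fixes points the paper passes over: restarting blocks just after each $\mcD$-index, the block beginning at $K$, the $+1$ terms, and ruling out $|\mcD_{\epsilon_\ell}|=0$. For the last point, add one step. Geometric decay along $\mcG$ contradicts Assumption~\ref{assum:grad-dominates} only because Corollary~\ref{cor:reg-ub} limits runs of unsuccessful iterations to $\mcO(\log\lambda_k)$. That is what makes $\mcG$-iterations dense enough in $k$ for the decay to beat $c_*k^{-2/3}$.
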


\begin{proof}
    Let $\omega \in \Omega$ denote a realization of Algorithm~\ref{alg:ASUTRO} with nonzero probability; we omit the dependence on $\omega$ for simplicity.  From Lemma~\ref{lem:grad-ub-io} and~\ref{lem:true-gradient-contraction}, there exists a finite index $K$ such that for all $k \ge K$, the following conditions are satisfied
    \begin{itemize}
        \item[(i)] For $k \in \mcG$, $\|\nabla f_{k+1}^{0}\| \le \bar{\eta} \|\nabla f_{k}^{0}\|$ with $\bar{\eta} \in (0, 1)$;
        \item[(ii)] For all $k$, $\|\nabla f_k^0\| \le \kappa_u \lambda_k^{1/3}$.
    \end{itemize}
    Fix $\epsilon_0 > 0$ sufficiently small such that for all $\epsilon < \epsilon_0$, $T_\epsilon > K.$ Let $\mcD_\epsilon = \{k_1, k_2, \dots, k_m\}$ be the indices in $\mcD$ prior to $T_\epsilon$, where $k_1 < k_2 < \dots < k_m < T_\epsilon$ and $m = |\mcD_\epsilon|$. We partition the successful iterations $\mcG_\epsilon$ into intervals defined by $\mcD_\epsilon$. Let $\mcG^{(i)} = \mcG \cap \{k_i, k_i+1, \dots, k_{i+1}-1\}$ for $i=1, \dots, m-1$, and $\mcG^{(m)} = \mcG \cap \{k_m, \dots, T_\epsilon-1\}$.
    For any $k \in \mcG^{(i)}$, the gradient norm contracts by $\bar{\eta}$. Thus, for the total number of successful steps in the $i$-th interval, denoted by $q_i = |\mcG^{(i)}|$, we have
    \begin{equation*}
        \|\nabla f_{k_{i+1}}^0\| \le (\bar{\eta})^{q_i} \|\nabla f_{k_i}^0\| \implies q_i \le \frac{\log(\|\nabla f_{k_i}^0\| / \|\nabla f_{k_{i+1}}^0\|)}{\log(1/\bar{\eta})}.
    \end{equation*}
    By the definition of $T_\epsilon$, we have $\|\nabla f_k^0\| > \epsilon$ for all $k < T_\epsilon$. 
    Combining this with condition (ii), for $k_i \ge K$, we obtain
    \[
    q_i 
    \le 
    \frac{\log(\kappa_u \lambda_{k_i}^{1/3} / \epsilon)}{\log(1/\bar{\eta})}.
    \]
    Since $k_i < T_\epsilon$ and $\lambda_k$ is non-decreasing, it follows that $\lambda_{k_i} \le \lambda_{T_\epsilon}$. In addition, we know from Remark~\ref{rmk:lambda-growth} \( \log \lambda_{T_\epsilon} \le \log \kappa_\lambda + (1+\epsilon_\lambda)\log\log T_\epsilon. \) 
    Hence, we obtain
    \[
    q_i
    \le
    \frac{
    \log \kappa_u + \tfrac{1}{3}\log \kappa_\lambda + \tfrac{(1+\epsilon_\lambda)}{3}\log\log T_\epsilon + \log(1/\epsilon)
    }{
    \log(1/\bar{\eta})
    }.
    \]
    Summing over all $i$ such that $k_i \ge K$, we obtain
    \begin{align*}
    |\mcG_\epsilon \cap \{k \ge K\}|
    \le
    |\mcD_\epsilon|
    \left(
    \frac{
    \log \kappa_u + \tfrac{1}{3} \log \kappa_\lambda + \tfrac{(1+\epsilon_\lambda)}{3}\log\log T_\epsilon + \log(1/\epsilon)
    }{
    \log(1/\bar{\eta})
    }
    \right).
    \end{align*}
    As a result, we can find a finite random variable $M_{\mcG}(\omega)$ that absorbs $K$ and the constant coefficients. Thus,
    \(
        |\mcG_\epsilon| \le M_{\mcG} \left( \log(1/\epsilon) + \log \log T_\epsilon \right) |\mcD_\epsilon|,
    \)
    which completes the proof.
\end{proof}

We now introduce the set of remaining iterations that do not satisfy the conditions defining $\mcD$ or $\mcG$. Specifically, define
\begin{align*}
\mcU := \{k \in \mathbb{N} : k \notin \mcD \cup \mcG \}.
\end{align*}
Thus, $\mcU$ consists of iterations that fail to achieve either a sufficiently large and successful step (in terms of both function reduction and step size) or a sufficient reduction in the gradient magnitude. We now analyze the number of iterations in $\mcU$ up to $T_\epsilon$ in terms of the occurrences of $\mcD$ and $\mcG$.

\begin{lemma} \label{lem:U-eps-count}
    Suppose Assumptions~\ref{assum:smooth-f}--\ref{assum:lipschtizmodelHessian} hold. 
    There exists a positive random variable $M_{\mcU}$ such that for any deterministic sequence $\{\epsilon_{\ell}\}$ with $\epsilon_{\ell} \downarrow 0$,
    \begin{equation*}
    \mbP\left(|\mcU_{\epsilon_{\ell}}| > M_{\mcU} \left(\log \log T_{\epsilon_{\ell}}\right) \left(|\mcD_{\epsilon_{\ell}}| + |\mcG_{\epsilon_{\ell}}| \right) \text{ i.o.}  \right)=0,
    \end{equation*}
    where $\mcU_{\epsilon_{\ell}} := \mcU \cap \{k < T_{\epsilon_{\ell}}\}$ and $T_{\epsilon_{\ell}} := \min \{k : \|\nabla f_k^0\| \le \epsilon_{\ell}\}$.
\end{lemma}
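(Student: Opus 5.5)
The idea is a standard trust-region counting argument, with the regularization parameter $\Lambda_k$ playing the role of the inverse radius. Every iteration in $\mcU$ is unsuccessful, so it multiplies $\Lambda_k$ by $\gamma_1>1$. Every successful iteration ($k\in\mcD\cup\mcG$) multiplies $\Lambda_k$ by at most $\gamma_2$, and never drives it below $\Lambda_{\min}$. Corollary~\ref{cor:reg-ub} gives an almost-sure upper bound $\Lambda_k\le\gamma_1\lambda_k$ for $k\ge K$. Hence between two consecutive successful iterations, the number of unsuccessful ones is at most logarithmic in $\gamma_1\lambda_k/\Lambda_{\min}$. That is $\mcO(\log\log k)$, because $\lambda_k=\mcO((\log k)^{1+\epsilon_\lambda})$ by Remark~\ref{rmk:lambda-growth}.

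Concretely, I fix $\omega$ outside a null set and take $K(\omega)$ such that $\Lambda_k\le\gamma_1\lambda_k$ for all $k\ge K$. I then choose $\epsilon_0$ small enough that $T_\epsilon>K$ for all $\epsilon<\epsilon_0$. I split $\{K,\dots,T_\epsilon-1\}$ into maximal runs of consecutive unsuccessful iterations. Each run starts either at $K$ or immediately after a successful iteration. Hence the number of runs is at most $1+|\mcD_\epsilon|+|\mcG_\epsilon|$.

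Within a run starting at index $j$ and having length $r$, I use $\Lambda_{j+r}=\gamma_1^{r}\Lambda_j\ge\gamma_1^{r}\Lambda_{\min}$. Combined with $\Lambda_{j+r}\le\gamma_1\lambda_{j+r}\le\gamma_1\lambda_{T_\epsilon}$, this gives
\[
r\le 1+\frac{\log(\lambda_{T_\epsilon}/\Lambda_{\min})}{\log\gamma_1}.
\]
The index $j+r\le T_\epsilon$ needs a little care at the final run. This is harmless because $\lambda_k$ is nondecreasing, and a bound at index $T_\epsilon$ is still available since $T_\epsilon\ge K$. By Remark~\ref{rmk:lambda-growth}, $\log\lambda_{T_\epsilon}\le\log\kappa_\lambda+(1+\epsilon_\lambda)\log\log T_\epsilon$. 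Therefore each run has length at most $c_1+c_2\log\log T_\epsilon$, with deterministic constants $c_1,c_2$.

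Multiplying by the number of runs and adding the at most $K$ early iterations in $\mcU$ gives
\[
|\mcU_\epsilon|\le K+\bigl(c_1+c_2\log\log T_\epsilon\bigr)\bigl(1+|\mcD_\epsilon|+|\mcG_\epsilon|\bigr).
\]
For $\epsilon$ small we have $\log\log T_\epsilon\ge1$, since $T_\epsilon\to\infty$. We also have $|\mcD_\epsilon|+|\mcG_\epsilon|\ge1$: if no successful iteration occurs after $K$, then $\Lambda_k$ grows geometrically and eventually violates the upper bound. Using these two facts, the additive constants $K$ and $1$ are absorbed into a random constant $M_{\mcU}(\omega)$. The bound then holds for all sufficiently small $\epsilon_\ell$, which is exactly the almost-sure "not i.o." statement.

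The main obstacle I anticipate is this bookkeeping near the boundary: handling the run that begins before $K$, and ensuring that $|\mcD_\epsilon|+|\mcG_\epsilon|$ is positive so that the purely multiplicative form in the statement holds. I would resolve both by folding $K$ and $\Lambda_K$ into $M_{\mcU}$. I would also invoke Corollary~\ref{cor:reg-ub} to rule out an unbounded unsuccessful tail after $K$.
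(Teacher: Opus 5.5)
Your proposal is correct and follows the paper's proof essentially step for step. Both arguments use Corollary~\ref{cor:reg-ub} and Remark~\ref{rmk:lambda-growth} to bound each maximal run of unsuccessful iterations after $K$ by $\mcO\bigl(\log(\gamma_1\lambda_{T_\epsilon}/\Lambda_{\min})/\log\gamma_1\bigr)=\mcO(\log\log T_\epsilon)$, count at most $|\mcD_\epsilon|+|\mcG_\epsilon|+1$ runs, and absorb $K$ and the additive constants into $M_{\mcU}$; your bookkeeping is slightly more careful than the paper's, since you explicitly justify $|\mcD_\epsilon|+|\mcG_\epsilon|\ge 1$ and $\log\log T_\epsilon\ge 1$, which the paper uses only implicitly in that final absorption.
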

\begin{proof}
    Let $\omega \in \Omega$ denote a realization of Algorithm~\ref{alg:ASUTRO} with a nonzero probability; we omit $\omega$ for simplicity. There exists an index $K$ such that for all $k \ge K$:
    \begin{itemize}
        \item[(i)] By Corollary~\ref{cor:reg-ub}, $\Lambda_k \le \gamma_1 \lambda_k$;
        \item[(ii)] By Lemma~\ref{lem:grad-ub-io} and Theorem~\ref{thm:bounded-error}, 
        \(
        \|\BFGbar_k^0\| \le \|\nabla f_k^0\| + \|\BFGbar_k^0-\nabla f_k^0\| \le \kappa_{u} \lambda_k^{1/3} + \kappa_{eg}.
        \)
    \end{itemize}
    Note that the algorithm increases $\Lambda_k$ by a factor $\gamma_1 > 1$ only when $k \in \mcU$, and decreases it only when $k \in \mcG \cup \mcD$. Let $q_{\mcU}$ be the number of consecutive unsuccessful iterations between any two successful iterations. Starting from $\Lambda_{\min}$, the value of $\Lambda_k$ after $q_{\mcU}$ steps is at least $\Lambda_{\min} \gamma_1^{q_{\mcU}-1}$. Since $\Lambda_k$ cannot exceed $\gamma_1 \lambda_k$, we have
    \begin{equation*}
        \Lambda_{\min} \gamma_1^{q_{\mcU}-1} \le \gamma_1 \lambda_k \implies q_{\mcU} \le \frac{\log(\gamma_1^2 \lambda_k / \Lambda_{\min})}{\log \gamma_1} + 1.
    \end{equation*}
    Fix $\epsilon_0 > 0$ sufficiently small such that for all $\epsilon < \epsilon_0$, $T_\epsilon > K.$
    By Remark~\ref{rmk:lambda-growth}, for all $k \in [K,T_\epsilon)$, the number of consecutive unsuccessful steps is bounded by
    \begin{align*}
        q_{\mcU} 
        \le \frac{\log(\gamma_1 \lambda_k / \Lambda_{\min})}{\log \gamma_1} + 1 
        \le \frac{\log \kappa_{\lambda} + (1+\epsilon_\lambda) \log \log T_\epsilon +  \log(\gamma_1 / \Lambda_{\min})}{\log \gamma_1} + 1,
    \end{align*}
    The total number of unsuccessful iterations $|\mcU_\epsilon \cap \{k \ge K\}|$ is the sum of these consecutive sequences. Since each such sequence must be preceded by an iteration in $\mcD \cup \mcG$ (or the initial index $K$), the number of sequences is at most $|\mcD_\epsilon| + |\mcG_\epsilon| + 1$. Thus,
    \begin{align*}
        |\mcU_\epsilon \cap \{k \ge K_\lambda\}| 
        &\le (|\mcD_\epsilon| + |\mcG_\epsilon| + 1) 
        \left( \frac{\log \kappa_{\lambda} + (1+\epsilon_\lambda) \log \log T_\epsilon + \log(\gamma_1 / \Lambda_{\min})}{\log \gamma_1} + 1 \right) \\
        &\le (|\mcD_\epsilon| + |\mcG_\epsilon| + 1) \log \log T_\epsilon \left( \frac{1+\epsilon_\lambda}{\log \gamma_1} + \frac{\log( \kappa_{\lambda} \gamma_1^2 / \Lambda_{\min})}{(\log \gamma_1) \log \log T_\epsilon} \right).
    \end{align*}
    As $\epsilon \downarrow 0$, we have $T_\epsilon \to \infty$, causing the second term in the parenthesis to vanish.  By defining a sufficiently large random variable $M_{\mcU}$ to account for the initial iterations $k < K$ and the constants above, we obtain
    \begin{equation*}
        |\mcU_\epsilon| \le M_{\mcU} (\log \log T_\epsilon) (|\mcD_\epsilon| + |\mcG_\epsilon|),
    \end{equation*}
    completing the proof.
\end{proof}

We now combine the bounds on $\mcD_{\epsilon}$, $\mcG_{\epsilon}$, and $\mcU_{\epsilon}$ to bound $T_{\epsilon}$ via
\(
T_{\epsilon} = |\mcD_{\epsilon}| + |\mcG_{\epsilon}| + |\mcU_{\epsilon}|
\)
and
$W_\epsilon$ via 
\(
\sum_{k=0}^{T_{\epsilon_{\ell}}-1} (N_k + N_k^\s).
\)
This yields the following sample and iteration complexity results.

\begin{theorem}[Iteration Complexity] \label{thm:T-eps-rate}
    Suppose Assumptions~\ref{assum:smooth-f}--\ref{assum:lipschtizmodelHessian} hold. 
    There exists a positive random variable $M_T > 0$ such that for any deterministic sequence $\{\epsilon_{\ell}\}$ with $\epsilon_{\ell} \downarrow 0$:
    \begin{equation*}
        \mathbb{P}\!\left( T_{\epsilon_{\ell}}  > M_T \epsilon_{\ell}^{-3/2} (\log(1/\epsilon_{\ell}))^{1+\delta} \log\log(1/\epsilon_{\ell})\text{ i.o.} \right) = 0,
    \end{equation*}
    where $\delta = (1+\epsilon_\lambda)/2 > 0$ and and $T_{\epsilon_{\ell}} := \min \{k : \|\nabla f_k^0\| \le \epsilon_{\ell}\}$.
\end{theorem}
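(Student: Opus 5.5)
We combine the three counting lemmas through the decomposition $T_{\epsilon_\ell} = |\mcD_{\epsilon_\ell}| + |\mcG_{\epsilon_\ell}| + |\mcU_{\epsilon_\ell}|$, working pathwise on the probability-one set where Lemmas~\ref{lem:D-eps-count}, \ref{lem:G-eps-count} and \ref{lem:U-eps-count} hold eventually in $\ell$. We may assume $T_{\epsilon_\ell}\to\infty$. If instead $T_{\epsilon_\ell}$ stays bounded along the path, the claimed inequality holds trivially once $M_T$ is taken at least that bound, because $\epsilon_\ell^{-3/2}\to\infty$.

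On such a path, for all large $\ell$, Lemma~\ref{lem:G-eps-count} gives
\[
|\mcD_{\epsilon_\ell}|+|\mcG_{\epsilon_\ell}| \le \bigl(1+M_\mcG(\log(1/\epsilon_\ell)+\log\log T_{\epsilon_\ell})\bigr)|\mcD_{\epsilon_\ell}|.
\]
Adding the bound of Lemma~\ref{lem:U-eps-count} gives
\[
T_{\epsilon_\ell}\le (1+M_\mcU\log\log T_{\epsilon_\ell})(|\mcD_{\epsilon_\ell}|+|\mcG_{\epsilon_\ell}|).
\]
Inserting Lemma~\ref{lem:D-eps-count} then yields the implicit inequality
\[
T_{\epsilon_\ell}\le C\,\epsilon_\ell^{-3/2}(\log T_{\epsilon_\ell})^{\delta}\,\bigl(\log(1/\epsilon_\ell)+\log\log T_{\epsilon_\ell}\bigr)\log\log T_{\epsilon_\ell},
\]
where $C$ is a finite random variable built from $M_\mcD,M_\mcG,M_\mcU$.

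The main obstacle is that this bound is implicit in $T_{\epsilon_\ell}$, so we need a bootstrapping step. Write $T=T_{\epsilon_\ell}$ and $x=1/\epsilon_\ell$. The right-hand side is $x^{3/2}$ times a polylogarithmic function of $T$ and $x$. We first get a crude bound: since $(\log T)^{\delta+1}(\log\log T)^2 = o(T^{1/2})$, the inequality forces $T\le C' x^{3/2}\log x\,(\log T)^{\delta}\log\log T\cdot\log\log T$. Suppose $T> x^{2}$. Then $\log x<\log T$, and the right-hand side is $o(T^{3/4}\cdot T^{1/4})$ sublinearly, since $x^{3/2}<T^{3/4}$ and the logarithmic factors are $o(T^{1/4})$. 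This contradicts the inequality for large $\ell$. Hence $T\le x^2$ eventually, so $\log T\le 2\log x$ and $\log\log T\le \log\log x+\log 2$.

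Substituting these estimates back gives
\[
T_{\epsilon_\ell}\le C''\epsilon_\ell^{-3/2}(\log(1/\epsilon_\ell))^{\delta}\bigl(\log(1/\epsilon_\ell)\bigr)\log\log(1/\epsilon_\ell),
\]
where the term $\log(1/\epsilon_\ell)+\log\log T$ is absorbed into a multiple of $\log(1/\epsilon_\ell)$. This equals $\epsilon_\ell^{-3/2}(\log(1/\epsilon_\ell))^{1+\delta}\log\log(1/\epsilon_\ell)$.

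Finally, we set $M_T$ to be the maximum of $C''$ and the finitely many ratios $T_{\epsilon_\ell}/(\epsilon_\ell^{-3/2}(\log(1/\epsilon_\ell))^{1+\delta}\log\log(1/\epsilon_\ell))$ over the initial $\ell$. This is a finite positive random variable. With this choice the event in the statement occurs only finitely often on a probability-one set, which proves the theorem.
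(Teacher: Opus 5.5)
Your proof is correct and follows essentially the paper's route: the same decomposition $T_{\epsilon_\ell}\le|\mcD_{\epsilon_\ell}|+|\mcG_{\epsilon_\ell}|+|\mcU_{\epsilon_\ell}|$ gives the same implicit polylogarithmic inequality, which you bootstrap to $\log T_{\epsilon_\ell}=\mcO(\log(1/\epsilon_\ell))$. The only difference is the crude bound: you get $T_{\epsilon_\ell}\le\epsilon_\ell^{-2}$ by contradiction, whereas the paper uses $(\log T)^{\delta}(\log\log T)^2<\sqrt{T}$ to get $T_\epsilon\le\widetilde M^2\epsilon^{-3}(\log(1/\epsilon))^2$. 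Your final enlargement of $M_T$ by the ratios for the initial $\ell$ is unnecessary for an i.o.\ statement and would make $M_T$ depend on the chosen sequence (those ratios may also be undefined when $\log\log(1/\epsilon_\ell)\le 0$), so take $M_T=C''$ alone.
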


\begin{proof}
    Let $\omega \in \Omega$ denote a realization of Algorithm~\ref{alg:ASUTRO} with a nonzero probability; we omit $\omega$ for simplicity. Fix $\epsilon_0 \in (0,1)$ sufficiently small such that for all $\epsilon < \epsilon_0$: 
    \begin{itemize}
        \item[(a)] $|\mcD_\epsilon| \le M_{\mcD} \epsilon^{-3/2}\left(\log T_{\epsilon} \right)^{\delta}$ by Lemma~\ref{lem:D-eps-count};
        \item[(b)] $|\mcG_{\epsilon}| \le M_{\mcG} \left( \log(1/\epsilon) + \log \log T_{\epsilon} \right) |\mcD_{\epsilon}|$ by Lemma~\ref{lem:G-eps-count};
        \item[(c)] $|\mcU_{\epsilon}| \le M_{\mcU} \left(\log \log T_{\epsilon}\right) \left(|\mcD_{\epsilon}| + |\mcG_{\epsilon}|\right)$ by Lemma~\ref{lem:U-eps-count};
        \item[(d)] $\log(1/\epsilon) \ge 1$ and $\log\log T_\epsilon \ge 1$;
        \item[(e)] $\left(\log T_{\epsilon} \right)^{\delta} (\log\log T_\epsilon)^2 < \sqrt{T_\epsilon}$.
    \end{itemize}
    Conditions (d) and (e) are naturally satisfied for sufficiently small $\epsilon$ because $T_\epsilon \to \infty$ as $\epsilon \to 0$, and any polylogarithmic function grows slower than $\sqrt{T_\epsilon}$.

     We first note that the total number of iterations until $\|\nabla f_k^0\| \le \epsilon$ is given by$$T_{\epsilon} \le |\mcD_\epsilon| + |\mcG_\epsilon| + |\mcU_\epsilon|.$$Substituting the bound for $|\mcU_\epsilon|$ from condition (d), we have:
     \begin{align*}
     T_{\epsilon} 
     \le |\mcD_\epsilon| + |\mcG_\epsilon| + M_{\mcU} (\log \log T_\epsilon) (|\mcD_\epsilon| + |\mcG_\epsilon|) 
     = (1 + M_{\mcU} \log \log T_\epsilon) (|\mcD_\epsilon| + |\mcG_\epsilon|).
     \end{align*}
     Next, we substitute the bound for $|\mcG_\epsilon|$ from condition (c):
     \begin{align*}
     T_{\epsilon} &\le (1 + M_{\mcU} \log \log T_\epsilon) \left( |\mcD_\epsilon| + M_{\mcG} (\log(1/\epsilon) + \log \log T_\epsilon) |\mcD_\epsilon| \right) \\
     &= |\mcD_\epsilon| (1 + M_{\mcU} \log \log T_\epsilon) (1 + M_{\mcG} (\log(1/\epsilon) + \log \log T_\epsilon)).
     \end{align*}
     Applying the bound for $|\mcD_\epsilon|$ from condition (b), specifically $|\mcD_\epsilon| \le M_{\mcD} \epsilon^{-3/2} \lambda_{T_\epsilon}^{1/2}$, we obtain:
     \begin{equation} \label{eq:T-eps-full-bound}
     T_{\epsilon} 
     \le M_{\mcD} \epsilon^{-3/2} \left(\log T_{\epsilon} \right)^{\delta} (1 + M_{\mcU} \log \log T_\epsilon) (1 + M_{\mcG} (\log(1/\epsilon) + \log \log T_\epsilon)).
     \end{equation}
     From \eqref{eq:T-eps-full-bound}, we expand the product and collect terms. 
    Absorbing the finite random constants $M_\mcG, M_{\mcU},$ and $M_\mcD$ into a generic finite random constant $M$ and expanding the product, we obtain
    \begin{align*}
    T_{\epsilon} \epsilon^{3/2}
    &\le M (\log T_{\epsilon})^{\delta} 
    \Big[
    1 
    + \log(1/\epsilon) 
    + \log \log T_\epsilon 
    + \log(1/\epsilon)\log \log T_\epsilon 
    + (\log \log T_\epsilon)^2
    \Big].
    \end{align*}

    Since $\log(1/\epsilon) \ge 1$ and $\log\log T_\epsilon \ge 1$ for sufficiently small $\epsilon$, the lower-order terms can be dominated by the higher-order ones. Hence, after enlarging $M$ if necessary, we obtain
    \begin{equation} \label{eq:tepsilon_reduced}
        T_{\epsilon} \epsilon^{3/2}
        \le M (\log T_{\epsilon})^{\delta}
        \left[
        \log(1/\epsilon)\log \log T_\epsilon
        + (\log \log T_\epsilon)^2
        \right].
    \end{equation}
    By condition (e), both
    \(
    (\log T_\epsilon)^{\delta}\log\log T_\epsilon < \sqrt{T_\epsilon}
    \)
    and
    \(
    (\log T_\epsilon)^{\delta}(\log\log T_\epsilon)^2 < \sqrt{T_\epsilon}
    \)
    hold for all sufficiently small $\epsilon$. Substituting these bounds into~\eqref{eq:tepsilon_reduced} yields
    \begin{align*}
        T_\epsilon \epsilon^{3/2}
        \le M \left( \log(1/\epsilon)\sqrt{T_\epsilon} + \sqrt{T_\epsilon} \right) 
        \le \widetilde M \log(1/\epsilon)\sqrt{T_\epsilon},
    \end{align*}
    for some finite random variable $\widetilde M>0$. Therefore,
    \begin{equation} \label{eq:coarse_derivation}
        T_\epsilon \le \widetilde M^2 \epsilon^{-3}(\log(1/\epsilon))^2.
    \end{equation}
    Taking logarithms in~\eqref{eq:coarse_derivation}, we obtain
    \[
    \log T_\epsilon
    \le 2\log \widetilde M + 3\log(1/\epsilon) + 2\log\log(1/\epsilon)
    \le 4\log(1/\epsilon),
    \]
    for all sufficiently small $\epsilon$. Consequently,
    \(
    \log\log T_\epsilon \le \log\bigl(4\log(1/\epsilon)\bigr)
    = \mathcal{O}(\log\log(1/\epsilon)),
    \)
    and
    \(
    (\log T_\epsilon)^\delta = \mathcal{O}\bigl((\log(1/\epsilon))^\delta\bigr).
    \)
    Substituting these back into \eqref{eq:tepsilon_reduced}, we obtain
    \begin{align*}
    T_\epsilon \epsilon^{3/2}
    &\le M (\log T_\epsilon)^\delta
    \left[
    \log(1/\epsilon)\log\log T_\epsilon
    + (\log\log T_\epsilon)^2
    \right] \\
    &= \mathcal{O}\!\left(
    (\log(1/\epsilon))^{1+\delta}\log\log(1/\epsilon)
    \right)
    +
    \mathcal{O}\!\left(
    (\log(1/\epsilon))^\delta (\log\log(1/\epsilon))^2
    \right).
    \end{align*}
    Since $\log(1/\epsilon)$ grows faster than $\log\log(1/\epsilon)$ as $\epsilon \downarrow 0$, the first term dominates the second for sufficiently small $\epsilon$. Therefore, there exists an almost surely finite random variable $M_T>0$ such that
    \[
    T_\epsilon \epsilon^{3/2}
    \le M_T (\log(1/\epsilon))^{1+\delta}\log\log(1/\epsilon),
    \]
    which proves the claim.
\end{proof}

\begin{theorem}[Sample Complexity] \label{thm:W-eps-rate}
    Suppose Assumptions~\ref{assum:smooth-f}--\ref{assum:lipschtizmodelHessian} hold. 
    There exists a positive random variable $M_W > 0$ such that for any deterministic sequence $\{\epsilon_{\ell}\}$ with $\epsilon_{\ell} \downarrow 0$:
    \begin{equation*}
        \mathbb{P}\!\left( W_{\epsilon_{\ell}}  > M_W \epsilon_{\ell}^{-9/2} (\log(1/\epsilon_{\ell}))^{\tilde \delta} (\log\log(1/\epsilon_{\ell}))^3 \text{ i.o.} \right) = 0,
    \end{equation*}
    where $\tilde \delta := \frac{17+11 \epsilon_\lambda}{2}$ and $W_{\epsilon_{\ell}} := \sum_{k=0}^{T_{\epsilon_{\ell}}-1} (N_k + N_k^\s)$.
\end{theorem}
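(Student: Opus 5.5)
Starting from $W_\epsilon=\sum_{k<T_\epsilon}(N_k+N_k^\s)$, I will bound each sample size by a deterministic function of $k$, sum up to $T_\epsilon$, and then insert the almost sure bound on $T_\epsilon$ from Theorem~\ref{thm:T-eps-rate}. Since the sampling rules \eqref{eq:as} and \eqref{eq:as-candidate} stop at the first $n$ with $\sigma_{\text{mx}}(\BFX,n)/\sqrt n\le \kappa_a\lambda_k^{-1/2}\Deltatilde_k^3$ (resp.\ $\Delta_k^3$), I expect $N_k\le 1+\lambda_k\,\sigma_{\text{mx}}^2/(\kappa_a^2\Deltatilde_k(N_k)^6)$. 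I will first argue that the variance estimators are eventually bounded, $\sigma_{\text{mx}}\le c_\sigma$ a.s.; under Assumption~\ref{assum:martingale}, a Borel--Cantelli argument like that of Lemma~\ref{lem:gradient-asfinite}, applied to the squared errors, should give this. It remains to lower-bound $\Deltatilde_k$ and $\Delta_k$.

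\emph{Lower bound on the radii.} By \eqref{eq:delta-tilde}, $\Deltatilde_k\ge\sqrt{\varepsilon_k/\Lambda_k}$. Corollary~\ref{cor:reg-ub} gives $\Lambda_k\le\gamma_1\lambda_k$ eventually, and $\varepsilon_k=c_*k^{-2/3}$, so $\Deltatilde_k^{-6}\le \gamma_1^3\lambda_k^3 c_*^{-3}k^{2}$. For the trial point, $\Delta_k^2=\|\BFGbar_k^0\|/(16\Lambda_k)$. Equation \eqref{eq:g-lowerbound-final} gives $\|\BFGbar_k^0\|\ge\tfrac12\|\nabla f_k^0\|$ eventually, and Assumption~\ref{assum:grad-dominates} gives $\|\nabla f_k^0\|>\varepsilon_k$. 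Together these yield $\Delta_k^{-6}\lesssim\lambda_k^3k^2$ as well. A sharper bound for $k<T_\epsilon$ comes from $\|\nabla f_k^0\|>\epsilon$, which gives $\Delta_k^{-6}\lesssim\lambda_k^3\epsilon^{-3}$, and likewise for $\Deltatilde_k$ through the second argument of its max together with $\Deltapre$ comparable to $\Delta_k$. Combining these, for $K\le k<T_\epsilon$,
\[
N_k+N_k^\s\le C\,\lambda_k^{4}\,\min\{k^2,\epsilon^{-3}\},
\]
where the extra factor $\lambda_k$ is the inflation in \eqref{eq:as}. The first $K$ iterations contribute a finite random amount, which is absorbed into $M_W$.

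\emph{Summation.} Using the $\epsilon^{-3}$ branch, $W_\epsilon\le M+C T_\epsilon\,\epsilon^{-3}\lambda_{T_\epsilon}^4$. Theorem~\ref{thm:T-eps-rate} gives $T_\epsilon\le M_T\epsilon^{-3/2}(\log(1/\epsilon))^{1+\delta}\log\log(1/\epsilon)$ eventually along $\epsilon_\ell$, and Remark~\ref{rmk:lambda-growth} together with $\log T_\epsilon=\mcO(\log(1/\epsilon))$ (shown in that proof) gives $\lambda_{T_\epsilon}^4\lesssim(\log(1/\epsilon))^{4(1+\epsilon_\lambda)}$. Multiplying these gives $\epsilon^{-9/2}$ times a polylog, which is what is needed up to logarithms. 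Reconciling with the stated exponent $\tilde\delta=(17+11\epsilon_\lambda)/2$ and the cube of $\log\log$ likely requires keeping $\lambda_k^{3}\cdot\lambda_k$ plus the $\Lambda_k$ bound expressed through $\log T_\epsilon$, and absorbing the $\log\log T_\epsilon$ factors from Lemmas~\ref{lem:G-eps-count}--\ref{lem:U-eps-count}. I will track those powers carefully at the end.

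\emph{Main obstacle.} The hardest step is lower-bounding $\Deltatilde_k(N_k)$ by order $\sqrt{\epsilon/\lambda_k}$ rather than only $\sqrt{\varepsilon_k/\Lambda_k}$. Using only the $\varepsilon_k$ floor gives $\sum_{k<T_\epsilon}k^2\lambda_k^4\sim T_\epsilon^3$, which yields $\epsilon^{-9/2}$ after all, since $T_\epsilon^{3}\approx\epsilon^{-9/2}$. So I will take that route first, because it avoids controlling $\Deltapre$ and the random $\|\BFGbar(\BFX_k,n)\|$ inside the stopping rule. The $\lambda$ powers then combine as $(\log)^{3(1+\delta)+4(1+\epsilon_\lambda)}$ and $(\log\log)^3$, matching the form of the claimed bound.
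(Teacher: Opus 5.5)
Your final route is the paper's proof. The floor $\Deltatilde_k(n)\ge\sqrt{\varepsilon_k/\Lambda_k}$ holds for every $n$, so it is harmless that the stopping rule \eqref{eq:as} really bounds $N_k-1$ rather than $N_k$. Combined with $\Lambda_k\le\gamma_1\lambda_k$ from Corollary~\ref{cor:reg-ub}, it gives $N_k,N_k^\s\lesssim\lambda_k^4k^2$, so the sum is $\lesssim\lambda_{T_\epsilon}^4T_\epsilon^3$. Theorem~\ref{thm:T-eps-rate} then yields exactly $4(1+\epsilon_\lambda)+3(1+\delta)=\tilde\delta$ and $(\log\log(1/\epsilon))^3$. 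Your lower bound on $\Delta_k$ for $N_k^\s$, via \eqref{eq:g-lowerbound-final} and Assumption~\ref{assum:grad-dominates}, is more explicit than the paper, which just calls that case analogous.

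Two points need repair.

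First, drop the claim $N_k\lesssim\lambda_k^4\epsilon^{-3}$. The second argument of the max in \eqref{eq:delta-tilde} contains $-c_g\Deltapre$, a stale $\Deltapre$, and intermediate estimates $\BFGbar(\BFX_k,n)$, so it can be negative. That is exactly why the $\varepsilon_k$ floor exists. The $\epsilon^{-3}$ bound is valid for $N_k^\s$, but $N_k$ drives the final rate anyway.

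Second, eventual boundedness of $\sigma_{\text{mx}}$ does not follow from Lemma~\ref{lem:dependent-bernstein} applied to squared errors as in Lemma~\ref{lem:gradient-asfinite}. The moments of $E_{k,j}^2$ grow like $(2m)!$, not $m!$, so the Bernstein condition fails. Moreover, $N_k$ is only guaranteed to be $\gtrsim\lambda_k$, so a union bound over $n\gtrsim\lambda_k$ gives tails of order $\exp(-c\sqrt{\lambda_k})$, which need not be summable in $k$.

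The paper sidesteps this by importing a.s. consistency of $\hat\sigma_F,\hat\sigma_{\BFG}$ along the adaptive sample sizes from Theorem~2.8 of \cite{shashaani2018astro}. A self-contained alternative is Borel--Cantelli on $\{N_k>n_k^*\}\subseteq\{\sigma_{\text{mx}}(\BFX_k,n_k^*)^2>c\}$. Here $n_k^*:=\lceil c\lambda_k\Lambda_k^3\kappa_a^{-2}\varepsilon_k^{-3}\rceil\gtrsim k^2$ is $\mcF_k$-measurable and $c$ is a suitable constant. A fourth-moment Chebyshev bound on the martingale differences $E_{k,j}^2-\mbE[E_{k,j}^2\mid\mcF_{k,j-1}]$ then gives probability $\mcO(k^{-2})$, which is summable. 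At the trial point, use $\max\{\lceil c\lambda_k\kappa_a^{-2}\Delta_k^{-6}\rceil,k^2\}$ in place of $n_k^*$.
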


\begin{proof}
    Let $\omega \in \Omega$ denote a realization of Algorithm~\ref{alg:ASUTRO} with a nonzero probability; we omit $\omega$ for simplicity. 
    By Theorem~2.8 of~\cite{shashaani2018astro}, for any $i\in\{0,\mathrm{s}\}$ we have
    $\widehat\sigma_{F}(\BFX_k^i,N_k^i)\to \sigma_F(\BFX_k^i)$ almost surely as $k\to\infty$; similarly for the gradient estimator, $\widehat\sigma_{\BFG}(\BFX_k^i,N_k^i)\to\sigma_{\BFG}(\BFX_k^i)$ almost surely as $k\to\infty$. 
    Hence, there exists an index $K_{fg}$ such that, for all $k\ge K_{fg}$ and $i\in\{0,\mathrm{s}\}$,
    \(
    \widehat\sigma_F^2(\BFX_k^i,N_k^i)\le 2\,\sigma_F^2
    \text{ and }
    \widehat\sigma_{\BFG}^2(\BFX_k^i,N_k^i)\le 2d\,\sigma_g^2.
    \)
    Set $\bar\sigma^2:=\max\{\sigma_0^2, 2\sigma_F^2, 2d \sigma_g^2\}$. Then, from \eqref{eq:as}, we obtain
    \[
    \sqrt{N_k} 
    \le \frac{\bar\sigma\sqrt{\lambda_k}}{\kappa_a (\Deltatilde_k(N_k))^3}
    \le \frac{\bar\sigma\sqrt{\lambda_k}\Lambda_k^{3/2}}{\kappa_a \varepsilon_k^{3/2}},
    \]
    where the last inequality comes from $\Deltatilde_k(n) \ge \sqrt{\varepsilon_k/\Lambda_k}$ for any $n \in \mbN$.
    By the argument in the proof of Lemma~\ref{lem:U-eps-count}, there exists a (random) index $K_\lambda$ such that for all $k \ge K_\lambda$,
    \(
    \Lambda_k \le \gamma_1 \lambda_k.
    \)
    Set $\delta = (1+\epsilon_{\lambda})/2$ and fix $\epsilon_0 > 0$ sufficiently small such that for all $\epsilon < \epsilon_0$: 
    \begin{itemize}
        \item[(a)] $T_\epsilon > K_w := \max\{K_{fg}, K_{\lambda}\}$;
        \item[(b)] $T_\epsilon \le M_T \epsilon^{-3/2} (\log(1/\epsilon))^{1+\delta}\log\log(1/\epsilon)$ by Theorem~\ref{thm:T-eps-rate}.
    \end{itemize}
    For $k \in (K_w, T_\epsilon]$, using $\varepsilon_k = c_* k^{-2/3}$, the number of samples at each iteration is bounded by:
    \begin{equation*}
        N_k^i \le \left\lceil \frac{\bar{\sigma}^2 \gamma_1^3 \lambda_k^4}{\kappa_a^2 \varepsilon_k^3} \right\rceil \le \left\lceil \kappa_n \lambda_{T_\epsilon}^4 T_\epsilon^2 \right\rceil = \left\lceil \kappa_n (\log T_\epsilon)^{4+4\epsilon_\lambda} T_\epsilon^2 \right\rceil,
    \end{equation*}
    where $\kappa_n := \bar{\sigma}^2 \gamma_1^3 c_*^{-3} \kappa_{a}^{-2}$. The total work $W_\epsilon$ up to the hitting time $T_\epsilon$ is then
    \begin{equation} \label{eq:wepsilon-sum}
    \begin{split}    
        W_\epsilon = \sum_{k=0}^{T_\epsilon-1} (N_k + N_k^\s) 
        &\le Q_w + 2 \sum_{k=K_w}^{T_\epsilon-1} \left( \kappa_n (\log T_\epsilon)^{4+4\epsilon_\lambda} T_\epsilon^2 + 1 \right) \\
        &\le Q_w + 2\kappa_n (\log T_\epsilon)^{4+4\epsilon_\lambda} T_\epsilon^3 + 2T_\epsilon,
    \end{split}
    \end{equation}
    where $Q_w$ is the work incurred during the initial $K_w$ iterations. 
    Using condition (b), we have
    \[
    T_\epsilon^3 
    \le M_T^3 \epsilon^{-9/2} (\log(1/\epsilon))^{3+3\delta}(\log\log(1/\epsilon))^3.
    \]
    Moreover, from condition (b), it follows that $\log T_\epsilon = \mathcal{O}(\log(1/\epsilon))$, and hence
    \[
    (\log T_\epsilon)^{4+4\epsilon_\lambda}
    = \mathcal{O}\big((\log(1/\epsilon))^{4+4\epsilon_\lambda}\big).
    \]
    Substituting these bounds into \eqref{eq:wepsilon-sum}, we obtain
    \begin{align*}
    W_\epsilon
    &\le Q_w + 2\kappa_n (\log T_\epsilon)^{4+4\epsilon_\lambda} T_\epsilon^3 + 2T_\epsilon \\
    &\le Q_w 
    + M_W \epsilon^{-9/2}
    (\log(1/\epsilon))^{7+4\epsilon_\lambda+3\delta}
    (\log\log(1/\epsilon))^3
    + 2T_\epsilon,
    \end{align*}
    for some finite random variable $M_W>0$. Since $T_\epsilon$ is of lower order, after enlarging $M_W$ if necessary, we conclude that, for all sufficiently small $\epsilon$,
    \[
    W_\epsilon
    \le
    M_W \epsilon^{-9/2}
    (\log(1/\epsilon))^{\tilde \delta}
    (\log\log(1/\epsilon))^3.
    \]
    This completes the proof.
\end{proof}

\begin{remark}
    The sample complexity of $\mcOtilde(\epsilon^{-4.5})$ for \texttt{Reg-ASTRO} in the absence of any additional structural properties in the objective function or sample paths improves an sample complexity of $\mcOtilde(\epsilon^{-6})$ for \texttt{ASTRO}.
\end{remark}

\section{Complexity Analysis with CRN}\label{sec:crn}

In this section, we investigate the sample complexity by exploiting Common Random Numbers (CRNs). Utilizing CRNs implies that the function evaluations at both the current point $\BFX_k$ and the candidate point $\BFX_k^s$ are performed using an identical sequence of random seeds and the same sample size $N_k$. A primary advantage of this approach is its ability to cancel out common noise components between the two points, provided there is sufficient regularity in the sample functions over the decision space. Consequently, CRNs can lead to a reduced order of the function estimation error relative to the gradient norm, significantly improving the overall sample complexity. To formalize this, we first introduce the following assumption on the sample path.

\begin{assumption}[Sample-path Differentiability] \label{assum:sample-path-c1}
    For every $\xi  \in \Omega\backslash \Omega_0$ where $\Omega_0$ is the largest set of measure zero, the sample function $F(\cdot, \xi)$ is continuously differentiable on $\mathbb{R}^d$.
\end{assumption}
The assumption that the sample paths $F(\cdot,\xi)$ are continuously differentiable with probability one holds in many common stochastic optimization settings where the randomness enters the function in a smooth manner. This is typical when $F$ is constructed from smooth algebraic operations (e.g., sums, products, compositions) involving smooth deterministic functions and random parameters.
More generally, the assumption holds whenever 
the randomness affects only coefficients or inputs rather than the functional form itself, which covers a large class of Monte Carlo–based stochastic optimization problems. 

To quantify the improvement in the scaling of the estimation errors enabled by CRNs, we extend the bounded-error analysis of Section~\ref{sec:complexity} to the CRN setting.

\begin{lemma}
\label{lem:bounded-error-crn}
Suppose Assumptions~\ref{assum:martingale}, \ref{assum:grad-dominates}, and~\ref{assum:sample-path-c1} hold. For iterates $\{\BFX_k\}$ generated by Algorithm~\ref{alg:ASUTRO-CRN} and any constants $\kappa_{eg}, \kappa_{ef} > 0$, the following hold:
\begin{subequations}
\begin{align}
    \mathbb{P}\!\left( \|\BFEbar_k^{g,0}\|
    > \kappa_{eg} \frac{\|\nabla f_k^{0}\|}{\Lambda_k} \ \text{i.o.}\right) &= 0, \label{eq:bounded-error-grad-crn} \\
    \mathbb{P}\!\left(\bigl|\Ebar_k^{0} - \Ebar_k^{s}\bigr|
    > \kappa_{ef} \left(\frac{\|\nabla f_k^{0}\|}{\Lambda_k}\right)^{3/2} \ \text{i.o.}\right) &= 0.  \label{eq:bounded-error-func-crn}
\end{align}
\end{subequations}
\end{lemma}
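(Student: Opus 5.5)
The plan is to reduce both claims to the machinery of Lemma~\ref{lem:gradient-asfinite} and Lemma~\ref{lem:bounded-error}, instantiated at precision level $\alpha=2$. I assume, as Table~\ref{tab:summary} indicates, that Algorithm~\ref{alg:ASUTRO-CRN} uses a sampling rule of the form~\eqref{eq:as} with $\Deltatilde_k^2$ in place of $\Deltatilde_k^3$. It must also evaluate the incumbent and the trial point with the same stream $\{\xi_{k,j}\}_{j\le N_k}$. By Remark~\ref{rmk:alpha_error}, this rule still gives summable tail bounds for every $\alpha\in[0,2]$.

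\textbf{Gradient claim.} For~\eqref{eq:bounded-error-grad-crn} I would repeat the two-case argument of Lemma~\ref{lem:gradient-asfinite} with $\alpha=2$. The sample size now satisfies $N_k\ge \lambda_k\sigma_0^2/(\kappa_a^2\Deltatilde_k^4)$, so the Bernstein exponent (Lemma~\ref{lem:dependent-bernstein}) becomes $\lambda_k\,c\,(\Deltapre)^{2\alpha-4}$, which is bounded below by $c'\lambda_k$ for $\alpha\le 2$. This gives $\|\BFEbar_k^{g,0}\|\le\kappa_{eg}\max\{\Deltapre,\sqrt{\varepsilon_k/\Lambda_k}\}^2$ eventually. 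I would then carry over the case analysis of Lemma~\ref{lem:bounded-error} verbatim, using the choice $\sqrt{\kappa_{eg}/\Lambda_{\min}}\le c_g$ and Assumption~\ref{assum:grad-dominates}. This replaces the threshold by $(\|\nabla f_k^0\|/\Lambda_k)^{\alpha/2}=\|\nabla f_k^0\|/\Lambda_k$, and Borel--Cantelli (Lemma~\ref{lem:borel-cantelli}) finishes.

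\textbf{Function-difference claim.} The main idea for~\eqref{eq:bounded-error-func-crn} is that under CRN
\[
\Ebar_k^0-\Ebar_k^s=\frac1n\sum_{j=1}^n Z_{k,j},\qquad Z_{k,j}:=\bigl(F(\BFX_k,\xi_j)-F(\BFX_k^s,\xi_j)\bigr)-\bigl(f_k^0-f_k^s\bigr).
\]
By Assumption~\ref{assum:sample-path-c1} and the fundamental theorem of calculus along the segment, this equals $-\int_0^1\BFS_k^\top\BFE^g_{j}(\BFX_k+t\BFS_k)\,dt$. Here $\BFE^g_j(\BFx)=\BFG(\BFx,\xi_j)-\nabla f(\BFx)$, taking $\BFG$ to be the sample-path gradient, whose mean is $\nabla f$. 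So $Z_{k,j}/\|\BFS_k\|$ is an average, over $t$, of projected gradient errors.

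\textbf{Main obstacle.} The difficulty is making $Z_{k,j}/\|\BFS_k\|$ satisfy the hypotheses of Lemma~\ref{lem:dependent-bernstein} uniformly in $k$. I would try the following steps.
\begin{enumerate}
\item Conditional mean zero: I would get $\mbE[Z_{k,j}\mid\mcF_{k,j-1}]=0$ from Fubini. This uses the fact that $\BFS_k$ is determined before the shared samples at the trial point are drawn.
\item Moment bounds: By Jensen/Minkowski in $t$ and Cauchy--Schwarz, I would bound $\mbE[|Z_{k,j}/\|\BFS_k\||^m\mid\mcF_{k,j-1}]$ by $\sup_t\mbE\|\BFE^g_j(\BFX_k+t\BFS_k)\|^m$. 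This is at most $\frac{m!}{2}(d b_g)^{m-2}d\sigma_g^2$, provided Assumption~\ref{assum:martingale} is read as holding at every point queried. That reading is the delicate point, and I would state it explicitly.
\item Concentration: With these bounds, the union-over-$n$ and geometric-series argument of Lemma~\ref{lem:gradient-asfinite} applies. Using $N_k\ge\lambda_k\sigma_0^2\Deltatilde_k^{-4}/\kappa_a^2$ and threshold $\kappa\Deltatilde_k^2$, it gives a summable tail for $\bigl|\frac1n\sum_j Z_{k,j}\bigr|/\|\BFS_k\|>\kappa\Deltatilde_k^2$.
\end{enumerate}

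\textbf{Conclusion.} Putting these together, $|\Ebar_k^0-\Ebar_k^s|\le\kappa\,\|\BFS_k\|\,\Deltatilde_k^2\le\kappa\,\Delta_k\Deltatilde_k^2$ eventually. Converting $\Deltatilde_k^2$ to $\|\nabla f_k^0\|/\Lambda_k$ as in Lemma~\ref{lem:bounded-error} handles one factor. For the other, $\Delta_k=\sqrt{\|\BFGbar_k^0\|/(16\Lambda_k)}$, and the gradient claim gives $\|\BFGbar_k^0\|\le 2\|\nabla f_k^0\|$ eventually, so $\Delta_k\lesssim(\|\nabla f_k^0\|/\Lambda_k)^{1/2}$. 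This yields the order $(\|\nabla f_k^0\|/\Lambda_k)^{3/2}$. Since $\kappa$ is arbitrary, any $\kappa_{ef}>0$ is attainable.
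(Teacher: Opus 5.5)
Your argument for \eqref{eq:bounded-error-grad-crn} matches the paper's: Lemma~\ref{lem:gradient-asfinite} at $\alpha=2$ via Remark~\ref{rmk:alpha_error}, followed by the case analysis of Lemma~\ref{lem:bounded-error}. The argument for \eqref{eq:bounded-error-func-crn}, however, breaks at step~1. Under CRN nothing new is drawn at the trial point. $\Fbar_k^{\s}$ reuses exactly the samples $\xi_{k,1},\dots,\xi_{k,N_k}$ that produced $\BFGbar_k=\BFGbar(\BFX_k,N_k)$, and $\Delta_k$ and $\BFS_k$ are computed from $\BFGbar_k$. So your justification that $\BFS_k$ ``is determined before the shared samples are drawn'' contradicts the shared-stream setup in your own first paragraph. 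Because $\BFS_k$ depends on every $\xi_{k,j}$ with $j\le N_k$ (and on the realized $N_k$), any filtration you choose has one of two defects. Either $\BFS_k$ is not $\mcF_{k,j-1}$-measurable, or $\xi_{k,j}$ is already measurable. In neither case is $Z_{k,j}$ a martingale difference.

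Concretely, the leading term $-\BFS_k^\top\BFE^g_{k,j}$ has nonzero conditional mean in general. An interior step $\BFS_k=-(\sfH_k+\sqrt{\Lambda_k\|\BFGbar_k\|}\,\sfI_d)^{-1}\BFGbar_k$ contains $\BFE^g_{k,j}/N_k$. For instance, with $d=1$ and $F(x,\xi)=f(x)+\xi x$ one gets $\frac1n\sum_j Z_{k,j}=-\bar\xi\,S_k$, where $S_k$ is a function of $\bar\xi$.

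The same dependence invalidates your other two steps:
\begin{itemize}
\item Step~2 fails because the points $\BFX_k+t\BFS_k$ are selected by the very samples whose errors you evaluate there. A conditional moment bound at a fixed point therefore does not transfer.
\item Step~3 fails because there is no fixed step for which $\{\sum_{j\le n}Z_{k,j}\}_n$ is a martingale. The union over $n$ used in Lemma~\ref{lem:gradient-asfinite} is therefore unavailable.
\end{itemize}
Hence Lemma~\ref{lem:dependent-bernstein} cannot be applied to $Z_{k,j}/\|\BFS_k\|$.

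The missing ingredient is control that is uniform over the data-dependent segment, which is how the paper proceeds. Averaging your fundamental-theorem-of-calculus identity over $j$ gives
\[
\Ebar_k^{\s}-\Ebar_k^{0}=\BFS_k^\top\BFEbar_k^{g,0}+\int_0^1\bigl(\BFEbar^g(\BFX_k+t\BFS_k,N_k)-\BFEbar_k^{g,0}\bigr)^\top\BFS_k\,\mathrm{d}t .
\]
The paper then uses two bounds:
\begin{itemize}
\item $\|\BFS_k\|\le\Delta_k\le\kappa_\Delta(\|\nabla f_k^0\|/\Lambda_k)^{1/2}$;
\item $\|\BFEbar^g(\BFX_k+t\BFS_k,N_k)\|\le\kappa_{eg}\|\nabla f_k^0\|/\Lambda_k$ for all $t\in[0,1]$ simultaneously.
\end{itemize}
Together these give the constant $3\kappa_\Delta\kappa_{eg}$. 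The second bound concerns the sample-mean gradient error as a random field over design points near $\BFX_k$. It must therefore be a supremum-type statement, not a Bernstein bound along a predictable step.

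The paper justifies this uniform control only briefly, citing the $\Deltatilde_k^{-4}$ order of $N_k$ and Assumption~\ref{assum:varcont}. A rigorous version would need something like a covering argument combined with a modulus of continuity for $\BFG(\cdot,\xi)$. Your uniform-in-$t$ moment bound from step~2 could be one ingredient of such an argument, but it cannot replace the uniformity itself.
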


\begin{proof}
Let $\omega \in \Omega$ denote a realization of Algorithm~\ref{alg:ASUTRO-CRN} with a nonzero probability; we omit $\omega$ for simplicity. It follows from Lemma~\ref{lem:gradient-asfinite} (in light of Remark~\ref{rmk:alpha_error} for the case $\alpha \in [0,2]$) that
\[
\mbP\left(\|\BFEbar_k^{g,0}\| > \kappa_{eg} \max \left\{\Deltapre, \sqrt{\frac{\varepsilon_k}{\Lambda_k}}\right\}^{2} \text{ i.o.}\right) = 0
\]
Under the adaptive sampling rule~\eqref{eq:as-crn}, the bound above
directly implies~\eqref{eq:bounded-error-grad-crn} by the same argument
as in the proof of Lemma~\ref{lem:bounded-error} with $\alpha=2$. 

We now focus on the function error difference in~\eqref{eq:bounded-error-func-crn}. By Assumption~\ref{assum:sample-path-c1} and the use of CRNs, we have
\begin{equation} \label{eq:taylor-error-final}
    \Ebar_k^{s}(N_k) - \Ebar_k^{0}(N_k) = \BFS_k^\top \BFEbar_k^{g,0}(N_k) + \int_{0}^1 \left( \underbrace{\BFEbar^g(\BFX_k + t\BFS_k, N_k) - \BFEbar_k^{g,0}(N_k)}_{\BFDbar_k^g} \right)^\top \BFS_k \, \mathrm{d}t,
\end{equation}
where $\BFEbar^g(\cdot, N_k) := \BFGbar(\cdot, N_k) - \nabla f(\cdot)$.
For all sufficiently large $k$, it follows from~\eqref{eq:bounded-error-grad-crn} that $\|\BFGbar_k^0\| \le (1+\kappa_{eg}/\Lambda_{\min})\|\nabla f_k^0\|$. Recalling $\Delta_k^2 = \|\BFGbar_k^0\|/(16\Lambda_k)$ and $\Lambda_k \ge \Lambda_{\min}$, we obtain
\begin{equation} \label{eq:s_ub_crn}
\|\BFS_k\| \le \Delta_k \le \kappa_\Delta \sqrt{\frac{\|\nabla f_k^0\|}{\Lambda_k}}, \text{ where } \kappa_\Delta := \frac{1}{4}\sqrt{1+\frac{\kappa_{eg}}{\Lambda_{\min}}}.
\end{equation}

For the first term in~\eqref{eq:taylor-error-final}, by~\eqref{eq:s_ub_crn} and 
the bound $\|\BFEbar_k^{g,0}\| \le \kappa_{eg} \Lambda_k^{-1} \|\nabla f_k^0\|$ 
for all sufficiently large $k$ (cf.~\eqref{eq:bounded-error-grad-crn}), we have
\[
|\BFS_k^\top \BFEbar_k^{g,0}|
\le \|\BFS_k\| \, \|\BFEbar_k^{g,0}\|
\le \kappa_\Delta \kappa_{eg}
\left(\frac{\|\nabla f_k^0\|}{\Lambda_k} \right)^{3/2}.
\]

For the integral term in~\eqref{eq:taylor-error-final}, we observe that for any $t \in [0,1]$, the term $\BFEbar^g(\BFX_k + t\BFS_k, N_k)$ represents the gradient estimation error at an intermediate point. By an argument analogous to the proof of~\eqref{eq:bounded-error-grad-crn}, we have for sufficiently large $k$,
\[
\|\BFEbar^g(\BFX_k + t\BFS_k, N_k)\| \le \kappa_{eg} \frac{\|\nabla f_k^0\|}{\Lambda_k} \text{ for all } t \in [0,1].
\]
This holds by Assumption~\ref{assum:varcont} and Remark~\ref{rmk:alpha_error}, since $N_k$ is of order $\Deltatilde_k^{-4}$, yielding uniform control across design points.
Consequently, the integral term is bounded as follows:
\begin{align*}
    \left| \int_{0}^1 (\BFDbar_k^g)^\top \BFS_k \, \mathrm{d}t \right| &\le \int_{0}^1 \left( \|\BFEbar^g(\BFX_k + t\BFS_k, N_k)\| + \|\BFEbar_k^{g,0}\| \right) \|\BFS_k\| \, \mathrm{d}t \\
    &\le 2\kappa_{eg} \frac{\|\nabla f_k^0\|}{\Lambda_k}  \Delta_k \le 2\kappa_{eg} \kappa_\Delta \left(\frac{\|\nabla f_k^0\|}{\Lambda_k}\right)^{3/2}.
\end{align*}
Combining the bounds for both terms in \eqref{eq:taylor-error-final} and defining $\kappa_{ef} := 3\kappa_{\Delta} \kappa_{eg}$, we conclude that the function error difference satisfies \eqref{eq:bounded-error-func-crn}.
\end{proof}

\begin{theorem} \label{thm:bounded-error-crn}
Suppose Assumptions~\ref{assum:martingale}, \ref{assum:grad-dominates}, and~\ref{assum:sample-path-c1} hold. For iterates $\{\BFX_k\}$ generated by Algorithm~\ref{alg:ASUTRO-CRN}, where the adaptive sample sizes for $\BFX_k$ and $\BFX_k^\s$ are chosen according to~\eqref{eq:as-crn}, and any constants $\kappa_{eg}, \kappa_{ef} > 0$, the following hold for any $i \in \{0,\text{s}\}$:
\begin{align*}
    \mbP \left( \|\BFEbar_k^{g,i}\| > \tilde\kappa_{eg} \Delta_k^2 \text{ i.o.} \right) &= 0, \\ 
    \mbP \left(\bigl|\Ebar_k^{0} - \Ebar_k^{s}\bigr| > \tilde\kappa_{ef} \Delta_k^3 \text{ i.o.} \right) &= 0,
\end{align*}
where $\tilde\kappa_{eg} \le \kappa_{eg}(16/(1-\kappa_{eg}\Lambda_{\min}))$ and $\tilde\kappa_{ef} \le \kappa_{ef}(16/(1-\kappa_{eg}\Lambda_{\min}))^{3/2}$.
\end{theorem}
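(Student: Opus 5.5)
The proof will mirror that of Theorem~\ref{thm:bounded-error}: we convert the $\|\nabla f_k^0\|/\Lambda_k$-based bounds of Lemma~\ref{lem:bounded-error-crn} into $\Delta_k$-based bounds. The vehicle is the same reverse-triangle-inequality comparison between $\|\BFGbar_k^0\|$ and $\|\nabla f_k^0\|$, combined with the definition $\Delta_k^2=\|\BFGbar_k^0\|/(16\Lambda_k)$.

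For the center point $i=0$, fix $\omega$ outside the null set on which~\eqref{eq:bounded-error-grad-crn} fails. Then for all sufficiently large $k$,
\[
\|\BFEbar_k^{g,0}\|\le \kappa_{eg}\|\nabla f_k^0\|/\Lambda_k\le (\kappa_{eg}/\Lambda_{\min})\|\nabla f_k^0\|.
\]
The reverse triangle inequality gives
\[
\|\BFGbar_k^0\|\ge (1-\kappa_{eg}/\Lambda_{\min})\|\nabla f_k^0\|,
\]
and therefore
\[
\frac{\|\nabla f_k^0\|}{\Lambda_k}\le c\,\Delta_k^2,\qquad c:=\frac{16}{1-\kappa_{eg}/\Lambda_{\min}}.
\]
This requires choosing $\kappa_{eg}$ small enough that the denominator is positive, exactly as remarked after Theorem~\ref{thm:bounded-error}. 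Hence the event $\{\|\BFEbar_k^{g,0}\|>\kappa_{eg}c\Delta_k^2\}$ is eventually contained in $\{\|\BFEbar_k^{g,0}\|>\kappa_{eg}\|\nabla f_k^0\|/\Lambda_k\}$. Since the latter occurs only finitely often a.s., so does the former, which yields $\tilde\kappa_{eg}=\kappa_{eg}c$.

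For the function-difference claim, we apply the same inclusion to~\eqref{eq:bounded-error-func-crn}. On the same eventual event,
\[
\left(\frac{\|\nabla f_k^0\|}{\Lambda_k}\right)^{3/2}\le c^{3/2}\Delta_k^3,
\]
so $\{|\Ebar_k^0-\Ebar_k^s|>\kappa_{ef}c^{3/2}\Delta_k^3\}$ is eventually a subset of the event in~\eqref{eq:bounded-error-func-crn}. This gives $\tilde\kappa_{ef}=\kappa_{ef}c^{3/2}$. Since the function error enters only as the CRN difference, no separate trial-point function statement is needed.

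For the trial point $i=\s$, the CRN sample size used at $\BFX_k^\s$ is the same $N_k$, determined by~\eqref{eq:as-crn}, which scales like $\lambda_k\Deltatilde_k^{-4}$. We would rerun the Bernstein/geometric-series argument of Lemma~\ref{lem:gradient-asfinite}, with $\alpha=2$ and Remark~\ref{rmk:alpha_error}, at the point $\BFX_k^\s$. This is the same reasoning already invoked in the proof of Lemma~\ref{lem:bounded-error-crn} for intermediate points $\BFX_k+t\BFS_k$. Since $\BFX_k^\s$ is $\mcF_{k}$-measurable given the samples used to construct it, we would condition appropriately, following Case (A-0) of Theorem 3.4 in~\cite{ha2025complexity}. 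That yields $\|\BFEbar_k^{g,\s}\|\le\kappa_{eg}\|\nabla f_k^0\|/\Lambda_k$ eventually, and the conversion above then gives the $\Delta_k^2$ bound.

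The main obstacle is this trial-point gradient step. Under CRN the trial point depends on the very samples used to evaluate it, so the martingale-difference structure at $\BFX_k^\s$ is not immediate. I would handle this either by the uniform control asserted in Lemma~\ref{lem:bounded-error-crn} or by citing the corresponding CRN argument in~\cite{ha2025complexity}. The remaining steps are routine inclusions of events followed by Borel--Cantelli.
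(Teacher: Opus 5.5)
\paragraph{Center point and function difference.} For the center point and for the CRN function-value difference, your argument is the paper's. The paper's proof reruns Theorem~\ref{thm:bounded-error} (reverse triangle inequality, then $\Delta_k^2=\|\BFGbar_k^0\|/(16\Lambda_k)$) with Lemma~\ref{lem:bounded-error-crn} in place of Lemma~\ref{lem:bounded-error}. That gives $c=16/(1-\kappa_{eg}/\Lambda_{\min})$, $\tilde\kappa_{eg}=c\,\kappa_{eg}$ and $\tilde\kappa_{ef}=c^{3/2}\kappa_{ef}$, exactly as you have them. The statement's $1-\kappa_{eg}\Lambda_{\min}$ is a slip for $1-\kappa_{eg}/\Lambda_{\min}$; it is harmless because $\kappa_{eg}$ can be shrunk. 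Your pathwise eventual-inclusion argument is slightly cleaner than the paper's per-$k$ probability comparison, which silently uses the gradient bound to compare the two events.

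\paragraph{Trial-point gradient bound.} The step you leave open, the gradient bound at $i=\s$, is a genuine gap, and the paper's one-line proof passes over the same gap. In Theorem~\ref{thm:bounded-error} that case is settled by citing the non-CRN analysis, which is valid there because Algorithm~\ref{alg:ASUTRO} evaluates $\BFX_k^\s$ with fresh samples under its own stopping rule~\eqref{eq:as-candidate}. Algorithm~\ref{alg:ASUTRO-CRN} instead reuses $\xi_{k,1},\dots,\xi_{k,N_k}$. Since $\BFX_k^\s$ is a function of exactly these samples (through $\BFGbar_k$, $\sfH_k$ and $\Delta_k$), the summands $\BFG(\BFX_k^\s,\xi_{k,j})-\nabla f(\BFX_k^\s)$ are not conditionally centered, and Lemma~\ref{lem:dependent-bernstein} does not apply.

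Your suggestion to ``condition appropriately'' cannot repair this. Conditioning on $\mcF_k$ together with the samples makes the summands deterministic. Conditioning on less leaves $\BFX_k^\s$ random and dependent on them.

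Your first fallback is to read off $t=1$ from the bound on $\|\BFEbar^g(\BFX_k+t\BFS_k,N_k)\|$ asserted inside the proof of Lemma~\ref{lem:bounded-error-crn}. This is the most direct reduction the paper offers. However, that bound is only asserted, on the strength of Assumption~\ref{assum:varcont} and Remark~\ref{rmk:alpha_error}, which control errors at points chosen independently of the samples.

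Closing the step needs one of two things:
\begin{itemize}
\item fresh samples at the trial point, or
\item a concentration bound uniform over $\mcB(\BFX_k;\Delta_k)$.
\end{itemize}
The second calls for sample-path regularity of $\BFG(\cdot,\xi)$ beyond Assumption~\ref{assum:sample-path-c1}, e.g.\ a per-sample Lipschitz condition like~\eqref{eq:lipG} combined with a covering argument. For perspective, Lemmas~\ref{lem:reg-ub-crn} and~\ref{lem:monotone_decreasing-crn} use only the $i=0$ gradient bound and the function-value difference. So the downstream CRN complexity results do not rely on the $i=\s$ gradient claim.
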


\begin{proof}
    The result follows from the same argument as in the proof of 
    Theorem~\ref{thm:bounded-error}, with Lemma~\ref{lem:bounded-error} replaced by Lemma~\ref{lem:bounded-error-crn}.
\end{proof}

\begin{corollary}\label{cor:hessian-error-crn}
Suppose Assumptions~\ref{assum:smooth-f}--\ref{assum:grad-dominates}, and~\ref{assum:sample-path-c1} hold. Let $\{\BFX_k\}$ be the sequence of iterates generated by Algorithm~\ref{alg:ASUTRO-CRN}. Then, the forward-difference Hessian $\sfH_k$ satisfies
\begin{equation*}
    \mathbb{P}\left(\|\nabla^{2}f(\BFX_k) - \sfH_k\| > \kappa_{e\sfH} \Delta_k \text{ i.o.}\right) = 0,
\end{equation*}
where $\kappa_{e\sfH} := \sqrt{d} \left( \frac{\kappa_{L\sfH}}{2} + \tilde\kappa_{eg} \right)$ and $\tilde{\kappa}_{eg}$ is defined as in Theorem~\ref{thm:bounded-error-crn}.
\end{corollary}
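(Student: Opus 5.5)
The plan is to reproduce the proof of Corollary~\ref{cor:hessian-error} almost verbatim, with the error bounds of Theorem~\ref{thm:bounded-error} replaced by those of Theorem~\ref{thm:bounded-error-crn}.

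The deterministic ingredient is Lemma~\ref{lem:fd-hessian}. It uses only Assumption~\ref{assum:smooth-f} and the column decomposition~\eqref{eq:hess-col-decomp}, so it does not depend on how the samples are drawn and remains valid for Algorithm~\ref{alg:ASUTRO-CRN}. It gives
\[
\|\nabla^2 f(\BFX_k)-\sfH_k\|\le \sqrt{d}\left(\frac{\kappa_{L\sfH}}{2}\Delta_k+\frac{\max_j\|\BFEbar_k^{g,j}-\BFEbar_k^{g,0}\|}{\Delta_k}\right).
\]
It therefore suffices to show that, almost surely for all large $k$, $\|\BFEbar_k^{g,j}-\BFEbar_k^{g,0}\|\le \tilde\kappa_{eg}\Delta_k^2$ for each $j\in\{1,\dots,d\}$.

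The first step is the incumbent error. Theorem~\ref{thm:bounded-error-crn} with $i=0$ gives $\|\BFEbar_k^{g,0}\|\le\tilde\kappa_{eg}\Delta_k^2$ eventually, almost surely.

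The second step handles the $d$ perturbed points $\BFX_k+\Delta_k\BFe_j$. Here I would repeat the $i=\s$ argument of Theorem~\ref{thm:bounded-error-crn}, with the point $\BFX_k^\s$ replaced by $\BFX_k+\Delta_k\BFe_j$. In the CRN setting these gradients are estimated with the same sample size $N_k$ from~\eqref{eq:as-crn}, which is of order $\Deltatilde_k^{-4}$. The Bernstein/Borel--Cantelli argument of Lemma~\ref{lem:gradient-asfinite}, read with Remark~\ref{rmk:alpha_error} for $\alpha=2$, only needs the conditional subexponential bound of Assumption~\ref{assum:martingale} at whatever point is sampled. The resulting tail bounds are therefore summable at each of the $d$ points, and a union bound over the finitely many $j$ preserves summability. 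Converting to $\Delta_k$ exactly as in Theorem~\ref{thm:bounded-error} then gives $\mbP(\|\BFEbar_k^{g,j}\|>\tilde\kappa_{eg}\Delta_k^2\text{ i.o.})=0$ for each $j$.

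Because the errors at the perturbed points and at the incumbent are correlated under CRN, I would not try to exploit cancellation in $\BFEbar_k^{g,j}-\BFEbar_k^{g,0}$. The triangle inequality gives $\max_j\|\BFEbar_k^{g,j}-\BFEbar_k^{g,0}\|\le 2\tilde\kappa_{eg}\Delta_k^2$ eventually. Since $\kappa_{eg}$ in Theorem~\ref{thm:bounded-error-crn} is arbitrary, the factor $2$ is absorbed by applying that theorem with $\kappa_{eg}/2$, which yields the stated constant $\kappa_{e\sfH}=\sqrt d(\kappa_{L\sfH}/2+\tilde\kappa_{eg})$.

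The main obstacle is the second step. One must check that the stopping-time sample size governs the estimation at the perturbed points even though those points depend on $\Delta_k$, which is itself $\mcF_{k}$-adapted only after the incumbent gradient is estimated. I would handle this by conditioning on the $\sigma$-algebra generated once $\BFGbar_k^0$ (and hence $\Delta_k$) is known. Conditionally on that $\sigma$-algebra, the sample size is bounded below deterministically by $\lambda_k\sigma_0^2/(\kappa_a^2\Deltatilde_k^4)$, so the geometric-series bound of Lemma~\ref{lem:gradient-asfinite} applies unchanged. Intersecting the $d+1$ full-measure events and substituting into Lemma~\ref{lem:fd-hessian} completes the argument.
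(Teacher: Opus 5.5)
Your proposal is essentially the paper's proof: the paper reruns the argument of Corollary~\ref{cor:hessian-error} with the CRN error bounds, i.e.\ almost-sure per-point gradient-error bounds at the perturbed points $\BFX_k+\Delta_k\BFe_j$ substituted into Lemma~\ref{lem:fd-hessian}, and your triangle inequality on $\BFEbar_k^{g,j}-\BFEbar_k^{g,0}$ with $\kappa_{eg}$ rescaled to $\kappa_{eg}/2$ makes explicit the factor of $2$ that the paper absorbs silently. One caution on your conditioning fix: it works only if the perturbed-point gradients use fresh draws taken after $\Delta_k$ is fixed (as in Corollary~\ref{cor:hessian-error}), whereas if they reuse the CRN draws $\xi_{k,1},\dots,\xi_{k,N_k}$, as your correlation remark suggests, conditioning on $\BFGbar_k^0$ also conditions on those draws, so the errors at $\BFX_k+\Delta_k\BFe_j$ are no longer conditionally mean-zero and the Bernstein step fails---a point the paper's one-line proof also glosses over.
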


\begin{proof}
    The result follows from the same argument as in the proof of Corollary~\ref{cor:hessian-error}, with Lemma~\ref{lem:bounded-error-crn}.
\end{proof}

The advantage of CRN is only obtained under the following somewhat strong assumption on the model Hessian.

\begin{assumption} \label{assum:Hessian_bound_delta}
    There exists $\tau \in (0,1]$ such that, for every $k \in \mbN$, $\|\sfH_k\| \le \tfrac{1}{\tau}\tfrac{\|\BFGbar_k\|}{\Delta_k}$.
\end{assumption} 

A condition of the same form is imposed in the convergence analysis of the stochastic trust-region method in~\cite{rinaldi2024stochastic}. Under this condition, every trust-region step is shown to satisfy the uniform lower bound
\(
    \|\BFS_k\| \ge \tau\Delta_k,
\)
which can replace~\eqref{eq:stepsize_lb} in the corresponding convergence analysis. We start by proving the same kind of uniform lower bound for the step-size.

\begin{lemma} \label{lem:stepsize_lb_delta}
    Suppose Assumption~\ref{assum:Hessian_bound_delta} holds. Then there exists a constant $\tilde\tau > 0$ such that $\|\BFS_k\| \ge \tilde\tau \Delta_k$. 
\end{lemma}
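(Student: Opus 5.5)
We split according to whether the trust-region constraint in the subproblem is active, using the optimality conditions of Lemma~\ref{lem:exactsln} (or their analogue for the subproblem of Algorithm~\ref{alg:ASUTRO-CRN}). If $\ell_k>0$, then \eqref{eq:opt-2} gives $\|\BFS_k\|=\Delta_k$, so any $\tilde\tau\le 1$ works. The substantive case is $\ell_k=0$, where the step is interior and must be bounded below through the stationarity equation \eqref{eq:opt-3}.

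In the interior case, \eqref{eq:opt-3} reads $(\sfH_k+\sqrt{\Lambda_k\|\BFGbar_k\|}\,\sfI_d)\BFS_k=-\BFGbar_k$. Taking norms gives
\[
\|\BFGbar_k\|\le \bigl(\|\sfH_k\|+\sqrt{\Lambda_k\|\BFGbar_k\|}\bigr)\|\BFS_k\|.
\]
We then bound each term in the multiplier by a multiple of $\|\BFGbar_k\|/\Delta_k$. For the Hessian term, Assumption~\ref{assum:Hessian_bound_delta} gives $\|\sfH_k\|\le \tau^{-1}\|\BFGbar_k\|/\Delta_k$. For the regularization term, the definition $\Delta_k^2=\|\BFGbar_k\|/(16\Lambda_k)$ gives $\sqrt{\Lambda_k\|\BFGbar_k\|}=4\Lambda_k\Delta_k=\|\BFGbar_k\|/(4\Delta_k)$. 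Substituting both,
\[
\|\BFGbar_k\|\le \Bigl(\frac{1}{\tau}+\frac14\Bigr)\frac{\|\BFGbar_k\|}{\Delta_k}\,\|\BFS_k\|.
\]
If $\BFGbar_k\neq 0$, dividing by $\|\BFGbar_k\|$ yields $\|\BFS_k\|\ge \frac{4\tau}{4+\tau}\Delta_k$. If $\BFGbar_k=0$, then $\Delta_k=0$ and the claim is trivial.

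Setting $\tilde\tau:=\frac{4\tau}{4+\tau}\in(0,1)$ therefore covers both cases. It is a deterministic constant, valid for every $k$ and not only eventually, because only Assumption~\ref{assum:Hessian_bound_delta} and the algorithmic definition of $\Delta_k$ are used. No probabilistic error bounds enter.

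The only point needing care is that the subproblem of Algorithm~\ref{alg:ASUTRO-CRN} might be solved inexactly. In that case we would require its termination condition to satisfy a residual criterion of the form $\|(\sfH_k+4\Lambda_k\Delta_k\sfI_d)\BFS_k+\BFGbar_k\|\le c\|\BFGbar_k\|$ with $c<1$. Repeating the argument then gives $\|\BFS_k\|\ge(1-c)\tilde\tau\Delta_k$ in the interior case, while the boundary case is unchanged. This adaptation is the main obstacle; the exact-solution case is routine.
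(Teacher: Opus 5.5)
Your proof is correct and follows the paper's argument. You lower-bound $\|\BFS_k\|$ through the stationarity condition \eqref{eq:opt-3}, then substitute Assumption~\ref{assum:Hessian_bound_delta} and $\sqrt{\Lambda_k\|\BFGbar_k\|}=4\Lambda_k\Delta_k$, which gives the same constant $\tilde\tau=16\tau/(16+4\tau)=4\tau/(4+\tau)$. Your explicit split on $\ell_k$ is a small improvement: the paper's one-line chain drops $\ell_k$ from the denominator of \eqref{eq:Sk_lowerbound}, which is valid only in the interior case, and the boundary case is trivial because $\tilde\tau<1$.
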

\begin{proof}
    We know from~\eqref{eq:Sk_lowerbound} and Assumption~\ref{assum:Hessian_bound_delta} that 
    \[
    \| \BFS_k \| \ge \frac{\|\BFGbar_k^0\|}{\|\sfH_k\| + \sqrt{\Lambda_k \|\BFGbar_k^0\|} } 
    \ge \frac{16 \Lambda_k \Delta_k^2}{ \frac{1}{\tau}\frac{\|\BFGbar_k\|}{\Delta_k} + 4 \Lambda_k \Delta_k} 
    = \frac{16 \Lambda_k \Delta_k^2}{ \frac{16 \Lambda_k}{\tau}\Delta_k + 4 \Lambda_k \Delta_k} 
    = \frac{16 \tau}{16 + 4 \tau}\Delta_k,
    \]
    which completes the proof with $\tilde\tau = 16\tau(16+4\tau)^{-1}$.
\end{proof}

Indeed, under Lemma~\ref{lem:stepsize_lb_delta}, the gradient contraction condition for successful iterations is no longer required in Algorithm~\ref{alg:ASUTRO} to establish the iteration complexity $\mcOtilde(\epsilon^{-3/2})$. The reason is that, in combination with Lemma~\ref{lem:stepsize_lb_delta}, the model reduction bound in Lemma~\ref{lem:model-reduction} becomes of order $\mcO(\Delta_k^3)$. As a result, the standard sufficient reduction condition in trust-region methods guarantees
\(
    \Fbar_k^0 - \Fbar_k^s \ge \mcO(\Delta_k^3)
\)
for every successful iteration. Therefore, the gradient contraction condition can be dropped without affecting the iteration complexity result, and we summarize the resulting~\texttt{Reg-ASTRO} with in Algorithm~\ref{alg:ASUTRO-CRN}.

\begin{algorithm}[htp]  \scriptsize 
\caption{\texttt{Reg-ASTRO} with CRN}
\label{alg:ASUTRO-CRN}
\begin{algorithmic}[1]
\Require $\BFx_{0}, \Delta_{0}^\pre,\Delta_{\max}^\pre,  \Lambda_{\min}, \sigma_0, \eta\in(\frac{1}{4},1), \gamma_1>1>\gamma_2>0, \lambda_k = \mcO((\log k)^{1+\epsilon_\lambda})>1$.

\State Estimate $\BFGbar_0^\pre$ using $\Delta_0^\pre$. Set $\Lambda_0=\max\left(\Lambda_{\min},\frac{\|\BFGbar_0^\pre\|}{16(\Delta_0^\pre)^2}\right)$ and $k=0$. \label{algstep:first-estimate-parameter}
\For{$k=0,1,2,\cdots$}
\State Evaluate $\Fbar_k=\Fbar(\BFX_k,N_k),\ \BFGbar_k=\BFGbar(\BFX_k,N_k)$ using \eqref{eq:sig-mx} and \eqref{eq:delta-tilde} with sample size
\begin{equation}
    N_k=\min\left\{n:\frac{\sigma_{\text{mx}}(\BFX_k,n)}{\sqrt{n}}\leq \frac{\kappa_a}{\sqrt{\lambda_k}} \Deltatilde_k^2(n)\right\}.\label{eq:as-crn}
\end{equation} \label{algstep:evaluate}
\State Find an approximate Hessian $\sfH_k\in\real^{d\times d}$ to construct the model as in \eqref{eq:model}.

\State Set $\Delta_k = \sqrt{\frac{\|\BFGbar_k\|}{16 \Lambda_k}}$ and the trial point $\BFX_k^\s=\BFX_k+\BFS_k$ by solving \eqref{eq:subproblem}. 
 \label{ASUTRO:subproblem}

\State Evaluate $\Fbar_k^\s=\Fbar(\BFX_k^\s,N_k)$ and $ \BFGbar_k^\s=\BFGbar(\BFX_k^\s,N_k)$ and compute $\rho_k$ as in \eqref{eq:success-ratio}.

\If{$\rho_k > \eta$} 
\label{ASUTRO:successful-iteration-condition}
\State Update $\BFX_{k+1} = \BFX_k^s$, $\Lambda_{k+1} = \max\left\{\gamma_2 \Lambda_k, \Lambda_{\min}\right\}$, and  $\BFGbar_{k+1}^\pre=\BFGbar_{k}^\s$. [Successful]

\Else
\State Update $\BFX_{k+1} = \BFX_k$, $\Lambda_{k+1} = \gamma_1 \Lambda_k$, and $\BFGbar_{k+1}^\pre=\BFGbar_{k}$. [Unsuccessful]

\EndIf

\State Set $\Delta_{k+1}^\pre = \min\left\{\sqrt{\frac{\|\BFGbar_{k+1}^\pre\|}{16\Lambda_{k+1}}},\Delta_{\max}^\pre\right\}$ and $k = k+1$.
\label{algstep:update-delta}
\EndFor
\end{algorithmic}
\end{algorithm}

We now begin to establish the complexity results. As a first step, we present the CRN counterparts of Lemmas~\ref{lem:reg-ub} and~\ref{lem:monotone_decreasing} from Section~4, which establish a uniform upper bound on $\Lambda_k$ and a sufficient decrease in true function on successful iterations.

\begin{lemma} \label{lem:reg-ub-crn}
    Suppose that Assumptions~\ref{assum:smooth-f}-\ref{assum:grad-dominates} and~\ref{assum:sample-path-c1}-\ref{assum:Hessian_bound_delta} hold. For iterates $\{\BFX_k\}$ generated by Algorithm~\ref{alg:ASUTRO}, there exists a constant $\Lambda_{\max}$ such that
    \begin{equation}
        \mbP\left( \Lambda_k > \gamma_1 \Lambda_{\max} \text{ i.o.}\right) = 0.
    \end{equation}
\end{lemma}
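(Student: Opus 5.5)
We show that for all sufficiently large $k$, the condition $\Lambda_k\ge\Lambda_{\max}$ forces $\rho_k>\eta$, i.e.\ $k$ is successful. This parallels Case~1 of Lemma~\ref{lem:reg-ub}, but with two differences. First, Lemma~\ref{lem:stepsize_lb_delta} replaces the boundary/interior case split. Second, the CRN error bounds of Theorem~\ref{thm:bounded-error-crn} and Corollary~\ref{cor:hessian-error-crn} replace the non-CRN ones. (The statement refers to Algorithm~\ref{alg:ASUTRO}, but the CRN bounds apply to Algorithm~\ref{alg:ASUTRO-CRN}, whose only success test is $\rho_k>\eta$; that is the algorithm I will analyze.) Once success is guaranteed whenever $\Lambda_k\ge\Lambda_{\max}$, the update rule multiplies $\Lambda_k$ by $\gamma_1$ only on unsuccessful iterations. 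Hence eventually $\Lambda_k$ cannot exceed $\gamma_1\Lambda_{\max}$: any iterate above $\Lambda_{\max}$ is successful and shrinks, and an excursion above $\gamma_1\Lambda_{\max}$ would require an unsuccessful step taken from above $\Lambda_{\max}$.

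\textbf{Bounding $|1-\rho_k|$.} Fix $\omega$ outside the null sets of Theorem~\ref{thm:bounded-error-crn} and Corollary~\ref{cor:hessian-error-crn}, and take $k$ large. We write
\[
\Fbar_k^0-\Fbar_k^\s-(M_k(\boldsymbol 0)-M_k(\BFS_k))
= \bigl(f_k^0-f_k^\s+\BFS_k^\top\BFGbar_k^0+\tfrac12\BFS_k^\top\sfH_k\BFS_k\bigr)+(\Ebar_k^0-\Ebar_k^\s).
\]
Taylor's theorem with the Hessian Lipschitz constant (Assumption~\ref{assum:smooth-f}) bounds the first bracket by three terms:
\begin{itemize}
\item $\|\BFEbar_k^{g,0}\|\|\BFS_k\|\le\tilde\kappa_{eg}\Delta_k^3$,
\item $\tfrac12\|\nabla^2 f_k^0-\sfH_k\|\|\BFS_k\|^2\le\tfrac{\kappa_{e\sfH}}2\Delta_k^3$,
\item $\tfrac{\kappa_{L\sfH}}6\Delta_k^3$.
\end{itemize}
The key CRN gain is that the function-error term is the difference $|\Ebar_k^0-\Ebar_k^\s|$, which Theorem~\ref{thm:bounded-error-crn} bounds by $\tilde\kappa_{ef}\Delta_k^3$. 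Separate bounds on each function error would only give $\mcO(\Delta_k^2)$ under the quartic sampling rule~\eqref{eq:as-crn}.

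For the denominator, Lemma~\ref{lem:model-reduction} combined with Lemma~\ref{lem:stepsize_lb_delta} gives $M_k(\boldsymbol 0)-M_k(\BFS_k)\ge2\Lambda_k\Delta_k\|\BFS_k\|^2\ge 2\tilde\tau^2\Lambda_k\Delta_k^3$. Therefore
\[
|1-\rho_k|\le\frac{\tilde\kappa_{eg}+\kappa_{e\sfH}/2+\kappa_{L\sfH}/6+\tilde\kappa_{ef}}{2\tilde\tau^2\Lambda_k}.
\]
This is at most $1-\eta$ as soon as $\Lambda_k\ge\Lambda_{\max}:=(\tilde\kappa_{eg}+\kappa_{e\sfH}/2+\kappa_{L\sfH}/6+\tilde\kappa_{ef})/(2\tilde\tau^2(1-\eta))$. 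Taking the inequality strict, or enlarging $\Lambda_{\max}$ slightly, gives $\rho_k>\eta$.

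\textbf{Main obstacle.} I expect the delicate step to be the legitimacy of the almost-sure error bounds, because the CRN bounds are stated relative to $\Delta_k$, which depends on $\BFGbar_k$. Corollary~\ref{cor:hessian-error-crn} also requires the finite-difference Hessian to be built with the same sampling. I will rely on Theorem~\ref{thm:bounded-error-crn} and Corollary~\ref{cor:hessian-error-crn} as stated, intersecting their probability-one events so that a single random index $K$ works.

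A subtler point is that the resulting constant $\Lambda_{\max}$ must not depend on $\lambda_k$ or $\|\BFGbar_k\|$. This holds here precisely because Assumption~\ref{assum:Hessian_bound_delta} supplies $\|\BFS_k\|\ge\tilde\tau\Delta_k$ uniformly. Without it we would again need the gradient-contraction branch and the $\lambda_k$-dependent threshold of Lemma~\ref{lem:reg-ub}. With the uniform step bound, the contradiction argument from Lemma~\ref{lem:reg-ub} completes the proof.
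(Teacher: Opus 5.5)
Your proposal is correct and follows essentially the same route as the paper. You bound $|\Fbar_k^\s - M_k(\BFS_k)|$ by $(\tilde\kappa_{eg}+\kappa_{e\sfH}/2+\kappa_{L\sfH}/6+\tilde\kappa_{ef})\Delta_k^3$ using the CRN bounds of Theorem~\ref{thm:bounded-error-crn} and Corollary~\ref{cor:hessian-error-crn}, lower-bound the model decrease by $2\tilde\tau^2\Lambda_k\Delta_k^3$ via Lemmas~\ref{lem:model-reduction} and~\ref{lem:stepsize_lb_delta}, and choose $\Lambda_{\max}$ so that $|1-\rho_k|\le 1-\eta$. Beyond the paper's proof, you also handle the strict inequality $\rho_k>\eta$, read the statement as referring to Algorithm~\ref{alg:ASUTRO-CRN}, and give the update-rule argument that turns ``large $\Lambda_k$ forces success'' into the i.o.\ conclusion.
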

\begin{proof}
    Let $\omega \in \Omega$ denote a realization of Algorithm~\ref{alg:ASUTRO} with a nonzero probability. For the remainder of this proof, we omit the $\omega$ for simplicity.
    Following the same argument in the proof of Lemma~\ref{lem:reg-ub} (Case 1) with Theorem~\ref{thm:bounded-error-crn} and Corollary~\ref{cor:hessian-error-crn}, we obtain, for all sufficiently large $k$,
    \begin{equation} \label{eq:model-error-candidate}
    \begin{split}
        |\Fbar_k^\s - M_k(\BFS_k)|
        & \le \left( \tilde\kappa_{eg} + \frac{\kappa_{e\sfH}}{2} + \frac{\kappa_{L\sfH}}{6} + \tilde\kappa_{ef} \right) \Delta_k^3.
    \end{split}    
    \end{equation}
    From Lemma~\ref{lem:model-reduction} and~\ref{lem:stepsize_lb_delta}, the success ratio satisfies
    \begin{align} \label{eq:success-ratio-ub-crn}
        |1-\hat{\rho}_k| &= \frac{|\Fbar_k^\s - M_k(\BFS_k)|}{|M_k(\boldsymbol{0})-M_k(\BFS_k)|} 
        \le \frac{\left( \tilde\kappa_{eg} + \frac{\kappa_{e\sfH}}{2} + \frac{\kappa_{L\sfH}}{6} + \tilde\kappa_{ef} \right) \Delta_k^3}{ 2\tilde\tau^2\Lambda_k  \Delta_k^3} \nonumber\\
        &\le \frac{\left( \tilde\kappa_{eg} + \frac{\kappa_{e\sfH}}{2} + \frac{\kappa_{L\sfH}}{6} + \tilde\kappa_{ef} \right)}{2\tilde\tau^2\Lambda_{\max}}.
    \end{align} 
    When $\Lambda_{\max} = \left( \tilde\kappa_{eg} + \frac{\kappa_{e\sfH}}{2} + \frac{\kappa_{L\sfH}}{6} + \tilde\kappa_{ef} \right)(2\tilde\tau^2(1-\eta))^{-1}$, we ensure that $|1-\rho_k| \le 1-\eta$, which implies $\rho_k \ge \eta$. This completes the proof.
\end{proof}

\begin{lemma}\label{lem:monotone_decreasing-crn}
    Suppose that Assumptions~\ref{assum:smooth-f}-\ref{assum:grad-dominates} and~\ref{assum:sample-path-c1}-\ref{assum:Hessian_bound_delta} hold. Define the index set of the successful iterations by $\mcS = \{k \in \mbN: \rho_k > \eta \}$. For iterates $\{\BFX_k\}$ generated by Algorithm~\ref{alg:ASUTRO-CRN}, there exists a finite positive constant $\kappa_{s}$ such that
    \begin{align*}
    \mbP\!\left( \bigl(k\in \mcS\bigr) \bigcap \bigl(f_k^0 - f_k^{\s} < \kappa_{s} \|\nabla f_k^0\|^{3/2} \bigr)\ \text{i.o.} \right) &= 0,
    \end{align*}
\end{lemma}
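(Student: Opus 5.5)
Fix a realization $\omega$ outside a null set and work with sufficiently large $k\in\mcS$. The plan has three steps: turn the success test into a lower bound on the observed reduction of order $\Lambda_k\Delta_k^3$, pass to the true reduction using the CRN error bound, and then convert $\Lambda_k\Delta_k^3$ into $\|\nabla f_k^0\|^{3/2}$ using the uniform bound on $\Lambda_k$.

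\emph{Observed reduction.} Combining $\rho_k>\eta$ with Lemma~\ref{lem:model-reduction} and Lemma~\ref{lem:stepsize_lb_delta} gives
\[
\Fbar_k^0-\Fbar_k^\s\;\ge\;\eta\bigl(M_k(\boldsymbol{0})-M_k(\BFS_k)\bigr)\;\ge\;2\eta\tilde\tau^2\Lambda_k\Delta_k^3 .
\]
Under CRN both estimates use the same sample size $N_k$, so the true reduction satisfies
\[
f_k^0-f_k^\s=(\Fbar_k^0-\Fbar_k^\s)+(\Ebar_k^\s-\Ebar_k^0)\;\ge\;2\eta\tilde\tau^2\Lambda_k\Delta_k^3-\bigl|\Ebar_k^0-\Ebar_k^\s\bigr| .
\]

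\emph{Controlling the error difference.} This is the step that requires care. By Theorem~\ref{thm:bounded-error-crn}, eventually $|\Ebar_k^0-\Ebar_k^\s|\le\tilde\kappa_{ef}\Delta_k^3$. Since $\kappa_{ef}$ in Lemma~\ref{lem:bounded-error-crn} is arbitrary, and $\tilde\kappa_{ef}$ scales with $\kappa_{ef}$ as long as $\kappa_{eg}$ is kept small so that $1-\kappa_{eg}\Lambda_{\min}\ge1/2$, I would choose $\kappa_{ef}$ (through $\kappa_{eg}$) so that $\tilde\kappa_{ef}\le\eta\tilde\tau^2\Lambda_{\min}$. Because $\Lambda_k\ge\Lambda_{\min}$, this gives
\[
f_k^0-f_k^\s\;\ge\;\eta\tilde\tau^2\Lambda_k\Delta_k^3
\]
for all sufficiently large $k\in\mcS$. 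The delicate point is that this one choice must be compatible with the constants already fixed in Lemma~\ref{lem:reg-ub-crn}. That is fine, because $\Lambda_{\max}$ is allowed to depend on $\tilde\kappa_{ef}$.

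\emph{Conversion to the gradient.} Substitute $\Delta_k=\bigl(\|\BFGbar_k^0\|/(16\Lambda_k)\bigr)^{1/2}$:
\[
\Lambda_k\Delta_k^3=\frac{\|\BFGbar_k^0\|^{3/2}}{64\sqrt{\Lambda_k}} .
\]
Lemma~\ref{lem:reg-ub-crn} gives $\Lambda_k\le\gamma_1\Lambda_{\max}$ eventually; this constant bound, unlike the $\lambda_k$ bound in Section~\ref{sec:complexity}, is what makes $\kappa_s$ a constant. Next, from \eqref{eq:bounded-error-grad-crn} and the reverse triangle inequality,
\[
\|\BFGbar_k^0\|\ge\Bigl(1-\frac{\kappa_{eg}}{\Lambda_{\min}}\Bigr)\|\nabla f_k^0\|\ge\tfrac12\|\nabla f_k^0\|\quad\text{for }\kappa_{eg}\le\Lambda_{\min}/2 .
\]
Combining these yields
\[
f_k^0-f_k^\s\;\ge\;\kappa_s\|\nabla f_k^0\|^{3/2},\qquad \kappa_s:=\frac{\eta\tilde\tau^2}{64\sqrt{\gamma_1\Lambda_{\max}}}\,2^{-3/2},
\]
for all sufficiently large $k\in\mcS$.

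Each ingredient (Theorem~\ref{thm:bounded-error-crn}, Lemma~\ref{lem:bounded-error-crn}, Lemma~\ref{lem:reg-ub-crn}) fails only finitely often almost surely. Intersecting these finitely many probability-one events gives $\mbP(\cdot\text{ i.o.})=0$ for the event in the statement.
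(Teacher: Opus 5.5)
Your proposal is correct and follows the paper's proof essentially step for step: the observed reduction $2\eta\tilde\tau^2\Lambda_k\Delta_k^3$ from Lemma~\ref{lem:model-reduction} and Lemma~\ref{lem:stepsize_lb_delta}, then absorbing the CRN error difference via Theorem~\ref{thm:bounded-error-crn} with $\tilde\kappa_{ef}\le\eta\tilde\tau^2\Lambda_{\min}$, then converting to $\|\nabla f_k^0\|^{3/2}$ using $\Lambda_k\le\gamma_1\Lambda_{\max}$ and the relative gradient-error bound of Lemma~\ref{lem:bounded-error-crn}. Your choice $\kappa_{eg}\le\Lambda_{\min}/2$ just makes the paper's factor $(1-\kappa_{eg}/\Lambda_{\min})^{3/2}$ concrete as $2^{-3/2}$.
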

\begin{proof}
    Let $\omega \in \Omega$ denote a realization of Algorithm~\ref{alg:ASUTRO-CRN} with a nonzero probability. For the remainder of this proof, we omit the $\omega$ for simplicity.
    By Lemma~\ref{lem:model-reduction}, the decrease in the function value can be written as
    \begin{equation*}
    \begin{split}    
        2 \tilde\tau^2 \eta \Lambda_k \Delta_k^3 \le \Fbar_k^0 - \Fbar_k^\s = f_k^0 - f_k^\s + \Ebar_k^{0} - \Ebar_k^{\text{s}} 
    \end{split}
    \end{equation*}
    By Theorem~\ref{thm:bounded-error-crn}, we obtain, for all sufficiently large $k$,
    \[
    f_k^0 - f_k^\s \ge 2\tilde\tau^2 \eta \Lambda_{k} \Delta_k^3 -  |\Ebar_k^{0}-\Ebar_k^{\text{s}}| \ge 2\tilde\tau^2 \eta \Lambda_{k} \Delta_k^3 - \tilde\kappa_{ef}\Delta_k^3.
    \]
    By choosing $\kappa_{eg}\le (2\Lambda_{\min})^{-1}$ and $\kappa_{ef} \le \bar\tau^2\eta\Lambda_{\min}/(128\sqrt{2})$ in Theorem~\ref{thm:bounded-error-crn}, we get $\tilde\kappa_{ef} \le \bar\tau^2\eta\Lambda_{\min}$ and consequently obtain for sufficiently large $k$, $f_k^0 - f_k^\s \ge \tilde\tau^2 \eta \Lambda_{k} \Delta_k^3$. Using $\Lambda_k \le \gamma_1 \Lambda_{\max}$ (Lemma~\ref{lem:reg-ub-crn}) and $\Delta_k = (\|\BFGbar_k^0\|/(16 \Lambda_k))^{1/2}$, we obtain
    \[
    f_k^0 - f_k^\s 
    \ge \tilde\tau^2 \eta \Lambda_k \left( \frac{\|\BFGbar_k^0\|}{16 \Lambda_k} \right)^{3/2}
    = \frac{\tilde\tau^2 \eta}{64} \frac{\|\BFGbar_k^0\|^{3/2}}{\sqrt{\Lambda_k}}
    \ge \frac{\tilde\tau^2 \eta}{64\sqrt{\gamma_1 \Lambda_{\max}}}
    \|\BFGbar_k^0\|^{3/2}.
    \]
    Moreover, by Lemma~\ref{lem:bounded-error-crn}, we have, for sufficiently large $k$,
    \(
    \|\BFGbar_k^0 - \nabla f_k^0\| \le \frac{\kappa_{eg}}{\Lambda_{\min}} \|\nabla f_k^0\|,
    \)
    which implies
    \[
    \|\BFGbar_k^0\|
    \ge \|\nabla f_k^0\| - \|\BFGbar_k^0 - \nabla f_k^0\|
    \ge \left(1 - \frac{\kappa_{eg}}{\Lambda_{\min}}\right)\|\nabla f_k^0\|.
    \]
    Therefore, we obtain, for sufficiently large $k$,
    \[
    f_k^0 - f_k^\s 
    \ge \underbrace{\frac{\tilde\tau^2 \eta}{64\sqrt{\gamma_1 \Lambda_{\max}}}
    \left(1 - \frac{\kappa_{eg}}{\Lambda_{\min}}\right)^{3/2}}_{:=\kappa_s}
    \|\nabla f_k^0\|^{3/2}.
    \]
    By choosing $\kappa_{eg} \le \min\{(2\Lambda_{\min})^{-1},\Lambda_{\min}\}$, we obtain $\kappa_{\s} > 0$, which completes the proof.
\end{proof}

\begin{remark}
The exact solution of the subproblem in Lemma~\ref{lem:exactsln} is not essential for the analysis in this section. It is sufficient that the trial step $\BFS_k$ satisfies the two conditions
\begin{equation*}
\| \BFGbar_k^0 + (\sfH_k + c\sqrt{\Lambda_k \|\BFGbar_k^0\|}\sfI_d)\BFS_k \|
\le
c\sqrt{\Lambda_k\|\BFGbar_k^0\|}\,\|\BFS_k\|,
\end{equation*}
and
\begin{equation*}
\BFS_k^T\left(\sfH_k + c\sqrt{\Lambda_k\|\BFGbar_k^0\|}\sfI_d\right)\BFS_k > 0,
\end{equation*}
for some $c \in [0,1)$.
Combining these conditions with Lemma~\ref{lem:stepsize_lb_delta} yields a model reduction of order at least $\Delta_k^3$, which plays two key roles in the analysis. First, it provides a lower bound for the denominator of the success ratio~\eqref{eq:success-ratio-ub-crn}. As a result, there exists a finite constant $\tilde \Lambda_{\max} > 0$ such that any iteration with $\Lambda_k \ge \tilde \Lambda_{\max}$ must be successful for sufficiently large $k$, thereby extending Lemma~\ref{lem:reg-ub-crn} to this inexact setting. Second, it ensures that the reduction in the true objective is of order at least $\|\nabla f_k^0\|^{3/2}$, which aligns with the argument in Lemma~\ref{lem:monotone_decreasing-crn}. Therefore, all subsequent arguments, including the iteration and sample complexity results, continue to hold under these inexact subproblem conditions without requiring an exact solution.
\end{remark}

We now establish the main complexity results for the CRN setting. 
In particular, we first derive an iteration-complexity bound for reaching an $\epsilon$-first-order stationary point, 
and then characterize the corresponding sample complexity.

\begin{theorem}[Iteration Complexity]\label{thm:ic-crn} 
    Suppose that Assumptions~\ref{assum:smooth-f}-\ref{assum:grad-dominates} and~\ref{assum:sample-path-c1}-\ref{assum:Hessian_bound_delta} hold.
    There exists a positive random variable $M_T > 0$ such that for any deterministic sequence $\{\epsilon_{\ell}\}$ with $\epsilon_{\ell} \downarrow 0$:
    \[
    \mbP\!\left(T_{\epsilon_{\ell}}>M_{T}\epsilon_{\ell}^{-3/2}\ \text{i.o.}\right)=0,
    \]
    where $T_{\epsilon_{\ell}} := \min \{k : \|\nabla f_k^0\| \le \epsilon_{\ell}\}$.
\end{theorem}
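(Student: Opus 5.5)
Fix a realization $\omega$ outside the null sets on which Lemma~\ref{lem:reg-ub-crn}, Lemma~\ref{lem:monotone_decreasing-crn}, Theorem~\ref{thm:bounded-error-crn} and Corollary~\ref{cor:hessian-error-crn} hold. Then there is a finite random index $K$ such that for all $k\ge K$ two things are true. First, $\Lambda_k\le\gamma_1\Lambda_{\max}$. Second, every successful iteration $k\in\mcS$ satisfies $f_k^0-f_k^\s\ge\kappa_s\|\nabla f_k^0\|^{3/2}$. Unsuccessful iterations leave $\BFX_k$ unchanged. Hence $\{f(\BFX_k)\}_{k\ge K}$ is non-increasing, and it is bounded below by $f^*$. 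The plan is a standard split $T_\epsilon\le K+|\mcS_\epsilon\cap\{k\ge K\}|+|\mcU_\epsilon\cap\{k\ge K\}|$, where $\mcS_\epsilon,\mcU_\epsilon$ are the successful and unsuccessful indices below $T_\epsilon$. I would then bound each count by a deterministic multiple of $\epsilon^{-3/2}$ with a random constant.

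\textbf{Successful iterations.} For $k<T_\epsilon$ we have $\|\nabla f_k^0\|>\epsilon$. Telescoping over $k\in\mcS_\epsilon\cap\{k\ge K\}$ gives
\[
\kappa_s\epsilon^{3/2}\,|\mcS_\epsilon\cap\{k\ge K\}|\le f_K^0-f^*,
\]
so this count is at most $(f_K^0-f^*)\kappa_s^{-1}\epsilon^{-3/2}$. No logarithmic factor appears, because $\Lambda_k$ is now uniformly bounded rather than growing like $\lambda_k$.

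\textbf{Unsuccessful iterations.} I would use the usual $\Lambda$-bookkeeping. After $K$, each successful step multiplies $\Lambda_k$ by at least $\gamma_2$ (or resets it to $\Lambda_{\min}$), and each unsuccessful step multiplies it by $\gamma_1$. Also $\Lambda_k\in[\Lambda_{\min},\gamma_1\Lambda_{\max}]$ for $k\ge K$. Taking logs,
\[
|\mcU_\epsilon\cap\{k\ge K\}|\log\gamma_1\le \log\!\frac{\gamma_1\Lambda_{\max}}{\Lambda_{\min}}+|\mcS_\epsilon\cap\{k\ge K\}|\log(1/\gamma_2).
\]
The max with $\Lambda_{\min}$ only helps: a step that would go below $\Lambda_{\min}$ loses less than $\log(1/\gamma_2)$ in log-scale, so the inequality still holds. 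Equivalently, as in Lemma~\ref{lem:U-eps-count}, each run of consecutive unsuccessful steps has length at most $\log(\gamma_1^2\Lambda_{\max}/\Lambda_{\min})/\log\gamma_1+1$, a constant, and the number of runs is at most $|\mcS_\epsilon|+1$. Either way, $|\mcU_\epsilon|\le C(|\mcS_\epsilon|+1)$ for a deterministic $C$.

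\textbf{Combining.} We get $T_\epsilon\le K+(1+C)(|\mcS_\epsilon|+1)\le M_T\epsilon^{-3/2}$ for all $\epsilon<\epsilon_0(\omega)$, after absorbing $K$, $f_K^0-f^*$ and the constants into a random $M_T$. This uses $\epsilon^{-3/2}\ge1$ for small $\epsilon$. Applying this along any deterministic $\epsilon_\ell\downarrow0$ and noting the exceptional set has measure zero gives the claim.

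\textbf{Main obstacle.} The delicate point is making $K$ uniform: the ``sufficiently large $k$'' statements must hold simultaneously for all $k\ge K$ on a single full-measure set. They do, since each is a ``not i.o.'' statement and finite unions of null sets are null. A second subtlety is whether $T_\epsilon$ may be $\le K$. In that case $T_\epsilon\le K$ is trivially absorbed into $M_T$, so no separate handling is needed.
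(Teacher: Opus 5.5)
Your proposal is correct and follows essentially the same route as the paper. You fix a good $\omega$ and a random $K$ after which $\Lambda_k\le\gamma_1\Lambda_{\max}$ and every successful step gives $f_k^0-f_k^{\s}\ge\kappa_s\|\nabla f_k^0\|^{3/2}$, telescope to bound the successful count by $(f_K^0-f^*)\kappa_s^{-1}\epsilon^{-3/2}$, and bound each run of unsuccessful iterations by a constant using the cap on $\Lambda_k$. Your extra $+1$ for an initial unsuccessful run and your alternative log-accounting of $\Lambda_k$ are harmless variants of the paper's run-length constant $q_{\mcU}$.
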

\begin{proof}
    Let $\omega \in \Omega$ denote a realization of Algorithm~\ref{alg:ASUTRO-CRN} with a nonzero probability; we omit the $\omega$ for simplicity.
    There exists an index $K$ such that for all $k\ge K$:
    \begin{itemize}
        \item[(i)] By Lemma~\ref{lem:monotone_decreasing-crn}, 
        \(f_k^0 - f_k^\s \ge \kappa_s \|\nabla f_k^0\|^{3/2}\);
        \item[(ii)] By Lemma~\ref{lem:reg-ub-crn}, $\Lambda_k \le \gamma_1 \Lambda_{\max}$.
    \end{itemize}
    Fix $\epsilon_0>0$ such that $\epsilon_0^{3/2}<1$ and $K<T_\epsilon$ for all $\epsilon\le\epsilon_0$. Let $\epsilon\in(0,\epsilon_0]$ and set 
    \[
    q_{\mcU}:=\left\lceil \frac{\log(\Lambda_{\max}/\Lambda_{\min})}{\log \gamma_1}\right\rceil.
    \]
    Between any two consecutive successful indices in $[K,T_\epsilon)$ there are at most $q_{\mcU}$ unsuccessful iterations; otherwise, after $q_{\mcU}$ consecutive unsuccessful iterations we would have $\Lambda_k > \gamma_1 \Lambda_{\max}$ (since each unsuccessful iteration multiplies $\Lambda_k$ by $\gamma_1$), which contradicts (ii). Consequently,
    \(
    T_\epsilon \le K + (q_{\mcU}+1)\,|\,[K,T_\epsilon)\cap\mcS\,|.
    \)
    Furthermore, for $k\in[K,T_\epsilon)\cap\mcS$ we have $\|\nabla f_k^0\|>\epsilon$ and $f_k^0-f_k^\s\ge \kappa_s\|\nabla f_k^0\|^{3/2}\ge \kappa_s\epsilon^{3/2}$. Hence, we obtain 
    \[
    f_K^0-f^* \ge \sum_{k\in[K,T_\epsilon)\cap\mcS}(f_k^0-f_k^\s)
    \ge \kappa_s\epsilon^{3/2}\,|\,[K,T_\epsilon)\cap\mcS\,|.
    \]
    Combining the above bounds yields, for all $\epsilon \in (0,\epsilon_0]$,
    \[
    T_\epsilon\,\epsilon^{3/2}
    \le K\epsilon^{3/2} + (q_{\mcU}+1)\frac{f_K^0 - f^*}{\kappa_s}
    \le K\epsilon_0^{3/2} + (q_{\mcU}+1)\frac{f_K^0 - f^*}{\kappa_s}
    =: M_T.
    \]
    This completes the proof.
\end{proof}

\begin{theorem}[Sample Complexity]\label{thm:sample_complexity_CRN}
    Suppose that Assumptions~\ref{assum:smooth-f}-\ref{assum:grad-dominates} and~\ref{assum:sample-path-c1}-\ref{assum:Hessian_bound_delta} hold.
    There exists a positive random variable $M_W > 0$ such that for any deterministic sequence $\{\epsilon_{\ell}\}$ with $\epsilon_{\ell} \downarrow 0$:
    \[
    \mbP\!\left( W_{\epsilon_{\ell}}  > M_{W}\epsilon_{\ell}^{-7/2}(\log(1/\epsilon_{\ell}))^{-2}  \ \text{i.o.}\right) = 0,
    \]
    where $W_{\epsilon_\ell}:= 2\sum_{k=0}^{T_{\epsilon_{\ell}}-1} N_k$ is the total evaluations of $F(\cdot,\cdot)$ and $\BFG(\cdot,\cdot)$.
\end{theorem}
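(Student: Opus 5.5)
The proof follows the template of Theorem~\ref{thm:W-eps-rate}, with two simplifications available in the CRN setting. Lemma~\ref{lem:reg-ub-crn} caps $\Lambda_k$ by the \emph{constant} $\gamma_1\Lambda_{\max}$ rather than by $\gamma_1\lambda_k$. Theorem~\ref{thm:ic-crn} gives $T_\epsilon\le M_T\epsilon^{-3/2}$ with no logarithmic factor. Also, the sampling rule \eqref{eq:as-crn} uses $\Deltatilde_k^2$ instead of $\Deltatilde_k^3$, so each $N_k$ is one power of $\Delta_k$ cheaper. Finally, $W_\epsilon=2\sum_{k<T_\epsilon}N_k$, since the trial point reuses $N_k$ under CRN.

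Fix $\omega$ outside a null set. As in Theorem~\ref{thm:W-eps-rate}, Theorem~2.8 of~\cite{shashaani2018astro} gives an index $K_{fg}$ after which $\sigma_{\text{mx}}(\BFX_k,N_k)\le\bar\sigma$. Lemma~\ref{lem:reg-ub-crn} gives an index $K_\Lambda$ after which $\Lambda_k\le\gamma_1\Lambda_{\max}$. Set $K_w=\max\{K_{fg},K_\Lambda\}$ and take $\epsilon$ small enough that $T_\epsilon>K_w$.

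\textbf{Per-iteration sample bound.} By minimality of $N_k$ in \eqref{eq:as-crn}, $N_k-1$ violates the rule, so
\[
N_k\le 1+\frac{\bar\sigma^2\lambda_k}{\kappa_a^2\Deltatilde_k(N_k-1)^4}.
\]
I plan to lower-bound $\Deltatilde_k$ through the gradient rather than through $\varepsilon_k$. For $k<T_\epsilon$ we have $\|\nabla f_k^0\|>\epsilon$. Combining Lemma~\ref{lem:bounded-error-crn} with the argument of Lemma~\ref{lem:bounded-error}, Case~1, the inner term of \eqref{eq:delta-tilde} is bounded below by a constant multiple of $\sqrt{\|\nabla f_k^0\|/\Lambda_k}$, which is at least $c\sqrt{\epsilon/\Lambda_{\max}}$. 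The preliminary radius $\Deltapre$ is handled in the same way, because $\BFGbar_k^\pre$ is a gradient estimate at $\BFX_k$ with the same relative accuracy. This yields
\[
N_k\le 1+\kappa_n\lambda_k\,\epsilon^{-2}.
\]
As a cross-check, the cruder route through $\Deltatilde_k\ge\sqrt{\varepsilon_k/\Lambda_k}$ gives $N_k\le 1+\kappa_n'\lambda_k k^{4/3}$. After summation this is of the same order $\lambda_{T_\epsilon}T_\epsilon^{7/3}\approx\lambda_{T_\epsilon}\epsilon^{-7/2}$.

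\textbf{Summation.} Let $Q_w$ denote the finite work of the first $K_w$ iterations. Then
\[
W_\epsilon\le Q_w+2T_\epsilon+2\kappa_n\lambda_{T_\epsilon}\epsilon^{-2}T_\epsilon\le Q_w+2T_\epsilon+2\kappa_nM_T\,\lambda_{T_\epsilon}\,\epsilon^{-7/2}.
\]
Here $\log T_\epsilon=\mathcal{O}(\log(1/\epsilon))$ by Theorem~\ref{thm:ic-crn}. The leading power $\epsilon^{-7/2}$ comes from $\epsilon^{-3/2}$ iterations times $\epsilon^{-2}$ samples each.

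\textbf{Main obstacle.} The hard step is the logarithmic factor. The argument above produces the polylogarithm $\lambda_{T_\epsilon}$ with a \emph{positive} exponent. The statement instead claims $(\log(1/\epsilon))^{-2}$. To obtain that form, I would first try to avoid the uniform worst-case bound $\lambda_k\le\lambda_{T_\epsilon}$. The plan is to split the sum at $k\approx T_\epsilon/\log^{c}T_\epsilon$. Below the split, $N_k$ is controlled by $k^{4/3}$, which is small. Above the split, I would use the sharper gradient-based bound for $N_k$. The remaining slack would then be absorbed into the random constant $M_W$, using $\log T_\epsilon\asymp\log(1/\epsilon)$.

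I am not confident this bookkeeping actually produces the exponent $-2$. If it fails, the honest conclusion of this plan is the $\epsilon^{-7/2}$ rate up to a polylogarithmic factor. Closing that gap is the step I would scrutinize first against the paper's own argument.
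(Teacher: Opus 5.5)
Your cross-check route is the paper's proof. It uses $\Deltatilde_k(n)\ge\sqrt{\varepsilon_k/\Lambda_k}$ and $\Lambda_k\le\gamma_1\Lambda_{\max}$ (Lemma~\ref{lem:reg-ub-crn}) to get $N_k\lesssim\lambda_k k^{4/3}\le\lambda_{T_\epsilon}T_\epsilon^{4/3}$. Summing gives $\lambda_{T_\epsilon}T_\epsilon^{7/3}$, and then $T_\epsilon\le M_T\epsilon^{-3/2}$ (Theorem~\ref{thm:ic-crn}) finishes it.

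Your ``main obstacle'' is not real. The paper's own proof concludes
\[
W_\epsilon\le M_W(\log(1/\epsilon))^{2}\epsilon^{-7/2},
\]
with a \emph{positive} exponent. More precisely the factor is $(\log(1/\epsilon))^{1+\epsilon_\lambda}$, written as a square under $\epsilon_\lambda\le 1$. This matches the $\tilde{\mcO}(\epsilon^{-3.5})$ entry in Table~\ref{tab:summary}, so the $-2$ in the statement is a sign slip. Drop the splitting scheme and state the conclusion you actually reach. The splitting could not produce a decaying logarithm anyway: nothing available bounds $T_\epsilon$ below $M_T\epsilon^{-3/2}$ by a log factor, and every late-iteration bound on $N_k$ carries the growing factor $\lambda_k$.

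You also don't need the gradient-based lower bound on $\Deltatilde_k$, and as sketched it is not justified. The Case~1 argument of Lemma~\ref{lem:bounded-error}, which chooses $\sqrt{\kappa_{eg}/\Lambda_{\min}}\le c_g$, gives an \emph{upper} bound on $\sqrt{\|\BFGbar(\BFX_k,n)\|/(16\Lambda_k)}-c_g\Deltapre$, not a lower bound. A lower bound would need something like $c_g<1$. It would also need control of $\BFGbar_k^{\mathrm{pre}}$ and of the intermediate estimate $\BFGbar(\BFX_k,N_k-1)$ relative to $\|\nabla f_k^0\|$. Lemma~\ref{lem:bounded-error-crn} does not directly provide this: it concerns $\BFGbar_k^0$ at sample size $N_k$, while $\BFGbar_k^{\mathrm{pre}}$ comes from the previous iteration and is controlled relative to $\Delta_{k-1}$. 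The $\varepsilon_k$ floor in \eqref{eq:delta-tilde} exists precisely to avoid this, and it already gives the same $\epsilon^{-7/2}$ order.
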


\begin{proof}
    Let $\omega \in \Omega$ denote a realization of Algorithm~\ref{alg:ASUTRO-CRN} with a nonzero probability; we omit the $\omega$ for simplicity. By following the same argument in the proof of Theorem~\ref{thm:W-eps-rate}, there exists an index $K_{fg}$ such that, for all $k \ge K_{fg}$, 
    \[
        N_k \le \left\lceil \frac{\bar\sigma^{2}\,\lambda_k\,\Lambda_k^{2}}{\kappa_a^{2} \varepsilon_k^2} \right\rceil, \text{ where } \bar\sigma^2:=\max\{\sigma_0^2, 2\sigma_F^2, 2d \sigma_g^2\}.
    \]
    By the argument in the proof of Lemma~\ref{lem:reg-ub-crn}, there exists an index $K_\lambda$ such that for all $k \ge K_{\lambda}$, $\Lambda_k \le \gamma_1\Lambda_{\max}$.
    Fix a sufficiently small $\epsilon_0 > 0$ such that for all $\epsilon < \epsilon_0$:
    \begin{itemize}
        \item[(a)] $T_\epsilon > K_w := \max\{K_{fg},K_{\lambda}\}$;
        \item[(b)] $T_\epsilon \le M_T \epsilon^{-3/2}$ by Theorem~\ref{thm:ic-crn}.
    \end{itemize}
    For all $k \in (K_w, T_\epsilon]$, using $\varepsilon_k = c_* k^{-2/3}$ with $c_* > 0$, the number of samples at each iteration is bounded by
    \[
    N_k \le \left\lceil \frac{\bar\sigma^{2} \gamma_1^2 \Lambda_{\max}^{2}}{\kappa_a^{2}}\lambda_k\varepsilon_k^{-2} \right\rceil 
    \le \left\lceil \frac{\bar\sigma^{2} \gamma_1^2 \Lambda_{\max}^{2}}{c_*^2\kappa_a^2} \lambda_k k^{4/3} \right\rceil 
    \le \left\lceil \kappa_{n} (\log T_\epsilon)^{1+\epsilon_\lambda} T_\epsilon^{4/3} \right\rceil,
    \]
    where $\kappa_n := (\bar\sigma^{2} \gamma_1^2 \Lambda_{\max}^{2})(c_*\kappa_a)^{-2}$.
    Consequently,
    \begin{equation}
    \label{eq:wepsilon-ub-crn}
    \begin{split}    
    W_\epsilon
    = 2\sum_{k=0}^{T_\epsilon-1}N_k
    & \le 2\sum_{k=0}^{K_w-1} N_k  
    + 2 \sum_{k=K_w}^{T_\epsilon-1} \left( \kappa_n (\log T_\epsilon)^{1+\epsilon_\lambda} T_\epsilon^{4/3} + 1 \right)\\
    & \le Q_w + 2 \kappa_n (\log T_\epsilon)^{1+\epsilon_\lambda} T_\epsilon^{7/3} + 2 T_\epsilon
    \end{split}
    \end{equation}
    where $Q_w$ accounts for the initial sum.
    By condition~(b), we know that $T_\epsilon \le M_T \epsilon^{-3/2}$. Substituting this into~\eqref{eq:wepsilon-ub-crn}, we obtain, for some finite random variable $M_W > 0$,
    \[
    W_\epsilon \le Q_w + 2 \kappa_n \left(\log (M_T \epsilon^{-3/2})\right)^{2} (M_T \epsilon^{-3/2})^{7/3} + 2(M_T \epsilon^{-3/2})
    \le M_{W} (\log(1/\epsilon))^2 \epsilon^{-7/2},
    \]
    where the last inequality follows from the fact that the lower-order terms and the constant offsets are absorbed by the dominant $\epsilon^{-7/2}$ factor as $\epsilon \downarrow 0$. This completes the proof.
\end{proof}

\section{Numerical Experiments} 
We will now compare \texttt{Reg-ASTRO} with other solvers such as \texttt{ADAM}~\cite{kingma2014adam} and \texttt{ASTRO}~\cite{ha2025complexity}. The evaluation of the solvers will be conducted using SimOpt~\cite{Eckman_SimOpt}. 

\subsection{Stochastic Rosenbrock Problem}
We begin with the Rosenbrock function
\[
f(\BFx) = \sum_{i=1}^{d-1} 10(x_{i+1} - x_i^2)^2 + (1 - x_i)^2.
\]
The gradient of the deterministic component \( f(\BFx) \) is given analytically as
\[
\begin{aligned}
\frac{\partial f}{\partial x_1} &= -40x_1(x_2 - x_1^2) - 2(1 - x_1), \\
\frac{\partial f}{\partial x_i} &= 20(x_i - x_{i-1}^2) - 40x_i(x_{i+1} - x_i^2) - 2(1 - x_i), \quad i = 2, \ldots, d-1, \\
\frac{\partial f}{\partial x_d} &= 20(x_d - x_{d-1}^2).
\end{aligned}
\]
The Rosenbrock function provides a smooth yet nonconvex landscape with a narrow curved valley, posing a well-known challenge for gradient-based methods. 
This structure makes it particularly suitable for testing \texttt{Reg-ASTRO}, as it enables us to examine how the regularization mechanism and the gradient-contraction-based criterion contribute to finding improved solutions under nonconvex conditions within limited budget. To introduce stochasticity, we perturb both the function and gradient evaluations as
\[
F(\BFx,\bm{\xi}^f) = f(\BFx) + \sum_{i=1}^{d} \xi^f_i, 
\qquad
\BFG(\BFx,\bm{\xi}^g) = \nabla f(\BFx) + \sum_{i=1}^{d} \xi^g_i \BFe_i,
\]
where
\(
\xi^f_i \sim \mathcal{N}(0, 1) 
\) and 
\(
\xi^g_i \sim \mathcal{N}(0, 1).
\)
We evaluate the solvers under two problem dimensions, $d=5$ and $d=30$; see Figure~\ref{fig:rosen}. \texttt{Reg-ASTRO} exhibits a clear advantage over \texttt{ASTRO} in non-convex settings under an identical hyperparameter setting, and demonstrates sufficiently fast convergence behavior. In contrast, \texttt{ADAM} either shows slower initial progress compared to \texttt{Reg-ASTRO} and \texttt{ASTRO}, as observed in Figure~\ref{fig:5d}, or exhibits rapid initial progress followed by stagnation at later stages, as observed in Figure~\ref{fig:30d}, likely due to its use of a fixed sample size.

 
\begin{figure} [htp]
\centering
\subfloat[$d = 5$]{%
\resizebox*{6cm}{!}{\includegraphics{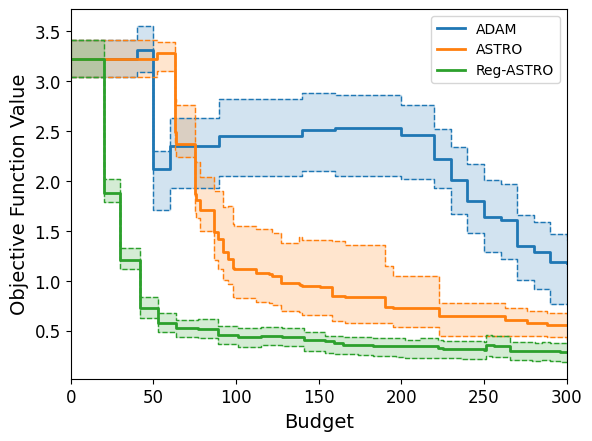}}\label{fig:5d}}
\subfloat[$d = 30$]{%
\resizebox*{6cm}{!}{\includegraphics{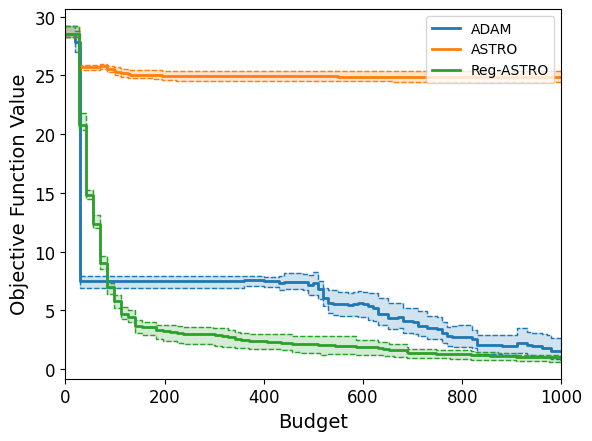}}\label{fig:30d}}
\caption{Mean value and 90\% confidence interval of objective function trajectory per spent budget. \texttt{Reg-ASTRO} consistently attains better solutions in nonconvex landscapes.} 
\label{fig:rosen}
\end{figure}

\subsection{Ambulance Deployment Problem}
For a more realistic and application-driven experiment, we consider an ambulance deployment problem based on a discrete-event simulation model. This problem captures key features of stochastic service systems and provides a challenging nonconvex testbed for simulation optimization, where noisy first-order oracle information is available via the IPA estimator; see Remark~\ref{rmk:autodiff}.

The system models a multi-base ambulance dispatch setting over a two-dimensional service region. Specifically, emergency calls arrive randomly over a square region $[0,20] \times [0,20]$, and each call requires dispatching the nearest available ambulance. If all ambulances are busy, incoming calls are queued until a unit becomes available. Upon dispatch, an ambulance travels to the call location, serves the request for a random duration, and then returns to the pool of available units.

The system consists of two types of ambulance bases: (i) fixed bases located at predetermined coordinates, and (ii) variable bases whose locations are decision variables. In our setting, three fixed bases are placed at $(5,5)$, $(5,15)$, and $(15,15)$, while two additional bases are allowed to move freely within the region. Since each variable base has two coordinates, the resulting optimization problem is four-dimensional. The objective is to minimize the expected response time of the system. The problem is inherently stochastic due to multiple sources of randomness, including: (i) exponentially distributed inter-arrival times of emergency calls, (ii) uniformly distributed spatial locations of calls, and (iii) exponentially distributed on-scene service times. 

We evaluate the performance of \texttt{Reg-ASTRO}, \texttt{ASTRO}, and \texttt{ADAM} under multiple initializations of the variable base locations. Specifically, we consider 20 different initial points, and for each initial point, we perform 20 independent runs for each solver. For each initial point, we define a reference solution as the best solution obtained across all solvers and runs. We then assess the convergence behavior of each method by measuring the optimality gap of its solution trajectory relative to this reference value; see Figure~\ref{fig:ambulance}. \texttt{Reg-ASTRO} achieves faster and more stable convergence compared to the other methods, which is consistent with the theoretical properties established in our analysis.

\begin{figure} [htp]
\centering
\subfloat[Progress curves]{%
\resizebox*{6cm}{!}{\includegraphics{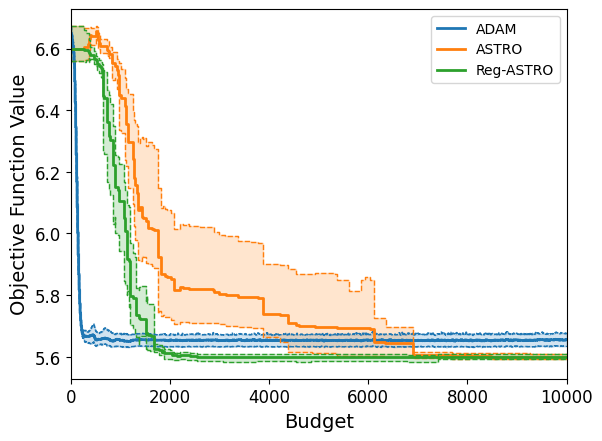}}\label{fig:amb-1}}
\subfloat[Solvability profiles]{%
\resizebox*{6cm}{!}{\includegraphics{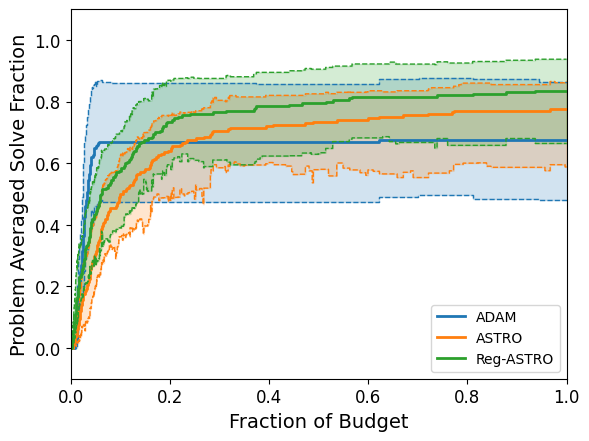}}\label{fig:amb-2}}
\caption{Performance comparison on the ambulance deployment problem. (a) Progress curves on a single instance, showing the mean objective value with 90\% confidence intervals versus budget. (b) Solvability profiles showing the fraction of problems solved within a 5\% optimality gap, with 90\% confidence intervals.} 
\label{fig:ambulance}
\end{figure}

\section{Concluding Remarks}
This paper crucially proposes a simple and close to its deterministic variant algorithm for stochastic trust-region optimization with adaptive sampling. The main changes to the classic methods is the way the trust-region radius is updated, which no longer ensures increase of the step-size after each success or decrease of the step-size otherwise. Similar acceptance criteria to the deterministic regularized trust-region methods in \cite{jiang2026beyond} are used but both are augmented by secondary conditions due to the fact that we use estimated quantities here. Criteria 1 in Algorithm~\ref{alg:ASUTRO} ensures that sufficient reduction leads to success only if the step-size is not too small (which could be shown to automatically hold in the deterministic setting
).  Criteria 2 in Algorithm~\ref{alg:ASUTRO}  ensures that gradient contraction leads to success only if the regularization coefficient is large enough relative to the estimate of the gradient norm. 

We conclude this paper by pointing out the open question of relaxing the  required Hessian assumption in the improved sample complexity of Section~\ref{sec:crn}. Another open direction is to apply similar quadratic regularization strategies for a derivative-free optimization problem. In that setting, the model quality is similarly handled with fully quadratic properties by means of interpolating a set of neighboring points; similar adaptive sampling conditions are also anticipated. However, the difficulty of increased complexity in terms of the dimension of the problem will need to be investigated. 




\bmhead{Acknowledgements}
This research was partially supported by Office of Naval Research Grant N000142412398 and N000142312588, and the National Science Foundation NSF-RTG Grant DMS-2134107.





\bmhead{Conflict of Interest}
The authors declare no conflict of interest.








\begin{appendices}





\end{appendices}


\bibliography{sn-bibliography}

\end{document}